\DeclareMathAlphabet{\mathsf}{OT1}{\sfdefault}{m}{n}
\SetMathAlphabet{\mathsf}{bold}{OT1}{\sfdefault}{b}{n}
\numberwithin{equation}{section}
\definecolor{WIMgreen}{RGB}{60 134 132}
\definecolor{red_pers}{RGB}{204 37 41}
\definecolor{UMblue}{RGB}{4 47 86}
\definecolor{myteal}{RGB}{0 123 137}
\definecolor{dartmouthgreen}{rgb}{0.05, 0.5, 0.06}\definecolor{cobalt}{rgb}{0.0, 0.28, 0.67}\definecolor{coolblack}{rgb}{0.0, 0.18, 0.39}
\definecolor{glaucous}{rgb}{0.38, 0.51, 0.71}\definecolor{hooker\'sgreen}{rgb}{0.0, 0.44, 0.0}\definecolor{lemonchiffon}{rgb}{1.0, 0.98, 0.8}\definecolor{oucrimsonred}{rgb}{0.6, 0.0, 0.0}\definecolor{radicalred}{rgb}{1.0, 0.21, 0.37}\definecolor{raspberry}{rgb}{0.89, 0.04, 0.36}\definecolor{royalazure}{rgb}{0.0, 0.22, 0.66}
\definecolor{dex}{RGB}{138 18 34}
\definecolor{darkgreen}{RGB}{0 69 0}
\definecolor{darkblue}{RGB}{0 0 99}
\theoremstyle{plain}
\newtheorem{theorem}{Theorem}[section]
\newtheorem{proposition}[theorem]{Proposition}
\newtheorem{lemma}[theorem]{Lemma}
\newtheorem{corollary}[theorem]{Corollary}
\theoremstyle{definition}
\newtheorem{definition}[theorem]{Definition}
\newtheorem{assump}{Assumption}    
\theoremstyle{remark}
\newtheorem{remark}[theorem]{Remark}
\def\E{\mathbb{E}}
\def\N{\mathbb{N}}
\def\P{\mathbb{P}}
\def\N{\mathbb{N}}
\def\Q{\mathbb{Q}}
\def\R{\mathbb{R}}
\definecolor{darkred}{rgb}{0,0.6,0}
\def\X{\mathbf{X}}
\def\cX{\mathcal{X}}
\newcommand{\cE}{\mathcal{E}}
\newcommand{\cF}{\mathcal{F}}
\newcommand{\cG}{\mathcal{G}}
\newcommand{\cL}{\mathcal{L}}
\newcommand{\ep}{\varepsilon}
\newcommand{\cO}{\mathcal{O}}
\newcommand{\Pro}{\mathbb{P}}
\renewcommand{\hat}{\widehat}
\newcommand{\e}{\mathrm{e}}
\renewcommand{\tilde}{\widetilde}%
\renewcommand{\d}{\mathop{}\!\mathrm{d} }
\newcommand{\KL}{\operatorname{KL}}
\newcommand{\1}{\mathbf{1}}
\newcommand{\bb}{\bar{b}}
\newcommand{\cH}{\mathcal{H}}
\newcommand{\vertiii}[1]{{\left\vert\kern-0.25ex\left\vert\kern-0.25ex\left\vert #1
		\right\vert\kern-0.25ex\right\vert\kern-0.25ex\right\vert}}
\let\originalleft\left
\let\originalright\right
\renewcommand{\left}{\mathopen{}\mathclose\bgroup\originalleft}
\renewcommand{\right}{\aftergroup\egroup\originalright}
\newcommand*\samethanks[1][\value{footnote}]{\footnotemark[#1]}
\title{\fontsize{16}{19} \selectfont 		Data-driven optimal stopping: A pure exploration analysis
}
\author{Sören Christensen\thanks{Kiel University, Department of Mathematics, Heinrich-Hecht-Platz 6, 24118 Kiel, Germany.
\newline Email: \href{mailto:christensen@math.uni-kiel.de}{christensen@math.uni-kiel.de}}\qquad Niklas Dexheimer\thanks{Aarhus University, Department of Mathematics, Ny Munkegade 118, 8000 Aarhus C, Denmark. \newline Email: \href{mailto:dexheimer@math.au.dk}{dexheimer@math.au.dk}/\href{mailto:strauch@math.au.dk}{strauch@math.au.dk}\newline }\qquad Claudia Strauch\samethanks}
\begin{document}

\maketitle
	
\begin{abstract}
The standard theory of optimal stopping is based on the idealised assumption that the underlying process is essentially known. 
In this paper, we drop this restriction and study data-driven optimal stopping for a general diffusion process, focusing on investigating the statistical performance of the proposed estimator of the optimal stopping barrier.
More specifically, we derive non-asymptotic upper bounds on the simple regret, along with uniform and non-asymptotic PAC bounds.
Minimax optimality is verified by completing the upper bound results with matching lower bounds on the simple regret.
All results are shown both under general conditions on the payoff functions and under more refined assumptions that mimic the margin condition used in binary classification, leading to an improved rate of convergence. 
Additionally, we investigate how our results on the simple regret transfer to the cumulative regret for a specific exploration-exploitation strategy, both with respect to lower bounds and upper bounds.
\end{abstract}

\noindent
\small{\textit{\href{https://mathscinet.ams.org/mathscinet/msc/msc2020.html}{2020 MSC:}} Primary 93E35; secondary 60J60, 62M05\\
\textit{key words:} Stochastic control, reinforcement learning, simple regret, efficient exploration.}
\normalsize
	
\section{Introduction}\label{sec: intro}
In many applications where, for example, physical constraints are in place, the concept of \emph{model-based} reinforcement learning (RL) is reasonable. 
In contrast to the usual model-free setup of RL, this approach is based on the assumption that there exists an underlying model, whose system parameters are, however, unknown to the agent and have to be learned through interaction with the environment and feedback. This is done by choosing strategies based on the current parameter estimate and trying to minimise the regret, i.e., the gap between the expected reward of the best strategy and the actual reward achieved.
The fact that the model framework can be included in the decision-making process makes this method more efficient and robust than model-free RL, and has the great advantage of allowing an in-depth theoretical analysis based on stochastic control theory (see, e.g., \cite{wangetal20,szpr21,yildiz2021continuous,jia22}). 
In particular, this approach delivers interpretable strategies that are better suited to high-stakes decision making than many ``black box'' procedures. 

In this paper, we show how sophisticated statistical methods can be integrated into model-based RL approaches in such a way that the optimal decision can be learned from the data on a theoretically validated basis.
More precisely, we focus on the study of \emph{optimal stopping problems}, and we provide both a data-driven approach and a proof of the optimality of the established convergence rates for the simple regret. 
		
Optimal stopping problems represent one of the central classes of problems in stochastic control, with applications in a wide variety of areas (cf., e.g., the monographs \cite{Peskir,MR2374974,oksendal2019}). 
The essential question is always how a decision maker can stop a stochastic process in a way that maximises the expected payoff. Standard methods for solving are based on either martingale or Markov methods and lead to variational or free-boundary problems (Stefan problems). 	
Our setting here is based on stopping problems for scalar diffusion processes $\X=(X_t)_{t\geq0}$ of the form 
\begin{equation}\label{eq: sde}
			\d X_t=b(X_t)\d t+\d W_t,\quad X_0=\xi,
\end{equation}
where $b\colon \R\to\R$ and the initial condition $\xi$ is supposed to be independent of the Brownian motion $(W_t)_{t\geq0}$, defined on a probability space $(\Omega, \mathscr F, \P_b)$.
We work under a very general model assumption by supposing that $b$ belongs to some set of sufficiently regular drift coefficients, for fixed constants $\mathbf C\geq 1$  and $A,\gamma>0$ specified as 
\begin{equation*}\label{def:driftclass}
			\Sigma(\mathbf C,A,\gamma)\coloneqq\left\{b \in \operatorname{Lip}_{\operatorname{loc}}(\R)\colon|b(x)| \le\mathbf C(1+|x|),\ \forall|x|>A\colon b(x)\operatorname{sgn}(x)\le -\gamma\right\}.
\end{equation*}
The above conditions in particular imply ergodicity of $\X$ and the existence of a unique invariant measure $\mu_b$ which is absolutely continuous wrt the Lebesgue
measure, and we denote by $\E_b[ \cdot]$ the expectation operator under which \eqref{eq: sde} holds with $\xi\sim\mu_b$ and by $\E_b^0[ \cdot]$ the one with $\xi=0$. %\footnote{\sc{Prüft bitte, ob die Änderungen richtig überall vorgenommen sind!}}	
Given a payoff function $g\colon (0,\infty)\to\R$, the objective is to stop $\X$ with a stopping time $\tau$ at a positive state such that the expected payoff $\E_b^0\left[g(X_\tau)\right]$ is maximised. 
In the case of stopping problems that are repeated in time, it is usually appropriate to maximise the average rate of return per unit of time. This setting appears in a natural way whenever the next stopping problem is started after stopping in the present one; fundamental results and a variety of applications can be found in \cite[Chapter 6]{ferguson2006applications}. More precisely, instead of a stopping time $\tau$, an entire sequence of intervention times $0\le \tau_1<\tau_2<\ldots$ is chosen, where the first stopping problem ends in $\tau_1$ with payoff $g(X_{{\tau_1}-})$, the second starts from the initial state $X_{\tau_1}=0$, and so on. 
By the fixed time $T>0$, this gives the payoff $\sum_{n:\tau_n\leq T}g(X_{\tau_n-})$, and the goal is to maximise the asymptotic growth rate 
\begin{equation*}
\Phi_b(g)\coloneqq\liminf_{T\to\infty}\frac{1}{T}\ \E_b^0\left[\sum_{n:\tau_n\leq T}g(X_{\tau_n-})\right].
\end{equation*}
Not surprisingly, the renewal structure reduces the problem to maximizing the rate of return $\E_b^0\left[g(X_\tau)\right]/\E_b^0\left[\tau\right]$ of a single round. 
Moreover, the general theory provides that the optimal stopping times are of the threshold type, i.e., the optimization must be performed only over stopping times of the form $\tau_x=\inf\left\{t:X_t\geq x\right\}$, leading to rates of return $g(x)/\E_b^0\left[\tau_x\right]$. 
In other words, 
\begin{equation}\label{def:Phi}
			\Phi_b(g)=\max_x \frac{g(x)}{\xi_b(x)}, \quad \text{ where } \xi_b(x)=\E_b^0\left[\tau_x\right].	
\end{equation}
		
\paragraph*{The pure exploration optimal stopping problem}
All the preceding considerations obviously assume that the dynamics of the underlying process $\X$ are known, which is one of the standard assumptions in optimal stopping theory.
Our previous work has shown that the basic idea of combining methods from nonparametric statistics and stochastic control theory provides fully data-driven procedures for several classes of control problems and stochastic processes (see \cite{chris23,chris23+,chris23++}).
The construction of the strategies was always based on a prior analysis of the estimation problem, a question closely related to \emph{pure exploration for bandits}, which has attracted much interest in the machine learning literature (cf., e.g., \cite{bubetal09,bubetal11,garivier16a,jedra} and the bibliographical remarks in Section 33.5 of \cite{latti20}).
There, the goal is to determine the best strategy without trying to maximize
the \emph{cumulative} observations.

More specifically, in a pure exploration problem, the performance of an estimator is usually evaluated not in terms of payoffs, but in terms of the resources (e.g., time) needed to make a reasonable decision. Examples of such problems arise in settings with a preliminary exploration phase, which in our case corresponds to observing $\X$ until time $T$.
This paper continues the analysis of the simple regret for optimal stopping problems and thus answers the question of how long the underlying process must be observed in order to make a meaningful stopping decision.  
In particular, we make the following contributions:
\begin{enumerate}[topsep=0.5pt, partopsep=0.5pt,itemsep=0cm,parsep=0cm]
\item[$\bullet$] We prove minimax optimal bounds for the simple regret;
\item[$\bullet$] we provide statements about the complexity of pure exploration; and
\item[$\bullet$] we establish a link to the cumulative regret.
\end{enumerate}
This is achieved by greatly deepening and refining the nonparametric analysis of our previous work on data-based stochastic control, introducing established statistical techniques (such as the peeling device) and concepts (such as the margin condition) in the context of model-based RL.

\paragraph{Main findings}	
Our analysis in \cite{chris23} has shown that the information necessary to construct a \emph{data-driven} strategy for solving the stochastic control problem is encoded in the invariant density of $\X$. As a central building block for deriving improved upper bounds on the regret, a precise description of the \emph{payoff functions} turns out to be essential. Specifically, we introduce a condition that is closely related to the \emph{margin condition} (also known as the \emph{Tsybakov noise condition}) used in binary classification. In a slightly modified form, it has also been applied in the context of bandit problems (see \cite{loca18}). To formulate it, we need some notation.

\begin{assump}\label{ann:margin}
Let $\Delta_0\in(0,1)$, $n\in\N$, $\eta,\beta>0$, and let $f$ be a continuous function on $(0,\infty)$ fulfilling $\sup_{x\in(0,\infty)}f(x)<\infty$.
We say that $f$ satisfies Assumption \ref{ann:margin} if there exist $x_1,\ldots,x_n\in(0,\infty)$ such that $f(x_i)=\sup_{x\in(0,\infty)} f(x)<\infty$ for all $i=1,\ldots,n$ and
\begin{equation}\label{def:margin}
\forall 0<\Delta\leq \Delta_0, \quad 
\cX_f(\Delta)\subseteq \bigcup_{i=1}^n \bigg(x_i-\frac12\eta\Delta^\beta,\ x_i+\frac12\eta\Delta^\beta\bigg),
\end{equation}
where
\begin{equation*}\label{def:set}	
\cX_f(\Delta)\coloneq \bigg\{x\in(0,\infty): \sup_{y\in(0,\infty)}f(y)- f(x)\leq \Delta \bigg\}, \quad \Delta>0.
\end{equation*}
\end{assump}

The most important parameter in this condition is $\beta$ appearing in the superset defined in \eqref{def:margin}, since it measures the difficulty of identifying the maximum. 
Indeed, the larger $\beta$ is, the smaller the set $\cX_f(\cdot)$ becomes, which intuitively reduces the number of candidates for the maximiser. 
In the sequel, we will propose an estimator $\hat y_T$ of the maximising argument of \eqref{def:Phi}, whose use as a threshold for the associated stopping problem leads to an expected payoff of $\E_b\left[\tfrac{g}{\xi_b}(\hat y_T)\right]$ instead of the maximum possible $\Phi_b(g)$. 
We derive upper bounds on the associated simple regret, holding uniformly
for $b\in\Sigma(\mathbf C,A,\gamma)$ and over some class $\mathcal G_b=\mathcal G_b(\Delta_0,n,\eta,\beta)$ of sufficiently regular payoff functions, in the sense that the associated payoff per time fulfills Assumption \ref{ann:margin} (see \eqref{def:setG} for the precise definition).
In particular, we will show that, for $\beta\ge1$, there exists a constant $C>0$ such that
\begin{equation*}\label{eq:expdecay}
\sup_{b\in\Sigma(\mathbf C,A,\gamma)}\sup_{g\in\mathcal G_b(\Delta_0,n,\eta,\beta)}
\mathbb E_b\left[\Phi_b(g)-\frac{g}{\xi_b}(\hat y_T)\right]\in\cO\left(\e^{-CT}\right),
\end{equation*}
i.e., in some cases even an exponentially decreasing simple regret is achievable. As our upper bounds are for the expected simple regret, we can then also apply them to the exploration-exploitation algorithm presented in \cite{chris23} and improve the results. 
Moreover, these upper bounds hold uniformly over the class of scalar ergodic diffusions with drift $b\in \Sigma(\mathbf C,A,\gamma)$ solving \eqref{eq: sde} and payoff functions satisfying the margin condition, and we show that uniform results in probability hold for an even larger class of payoff functions. 
More precisely, it will be proven that if $\beta\in(0,1)$, it holds for all payoff functions in a set $\mathcal G_{\bar g}(\kappa,T)$ describing some vicinity of $\bar g$ and for any given $\ep>0$, there exists some constant $c_\ep>0$ such that
		\[
\sup_{b\in\Sigma(\mathbf{C},A,\gamma)}\sup_{\bar{g}\in\cG_b(\Delta_0,n,\eta,\beta)}\sup_{g\in\cG_{\bar{g}}(\kappa,T)}\P_b\left(\Phi_b(g)-\frac{g}{\xi_{b}}(\hat{y}_T)>c_\ep T^{-1/(2-2\beta)}\right)\leq \ep.
		\]
The identified convergence rate $T^{-1/(2-2\beta)}$ improves on the rate of $T^{-1/2}$ established in \cite{chris23}, but it is not obvious whether it is optimal.  
We therefore complete our analysis with an examination of lower bounds, and we develop an approach inspired by the proof of minimax lower bounds for bandit problems with finitely many arms (see, e.g., \cite{latti20}).
Main steps are the construction of two suitable hypotheses and the control of the associated Kullback--Leibler divergence. Adopting this approach to our case leads to the task of finding drift functions $b,\bb\in\Sigma(\mathbf C,A,\gamma)$ and a suitable payoff function $g$ such that the maximisers of $g/\xi_b$ and $g/\xi_{\bb}$ are separated in such a way that, for suitable $\delta,\ep>0$,  
\begin{equation*}
\left\{ \Phi_{\bb}(g)-\frac{g(y)}{\xi_{\bb}(y)}\leq\delta \right\}\subseteq\left\{\Phi_b(g)-\frac{g(y)}{\xi_{b}(y)}>\ep \right\}.
\end{equation*}
While it is challenging to specify drift functions $b,\bb$ such that the associated functions $\xi_b,\xi_{\bb}$ (defined via \eqref{def:Phi}) have different shapes, we will actually verify in Theorem \ref{thm:lowmarg} that, for large enough $T$, there exists a constant $c_L>0$ such that
\[
\inf_{\tilde{y}_T}\sup_{b\in\Sigma(\mathbf{C},A,\gamma)}\sup_{\bar{g}\in  \cG_b(\Delta_0,n,\eta,\beta)}\sup_{g\in\cG_{\bar{g}}(\kappa,T)} \Pro_{b}\left(\Phi_b(g)-\frac{g(\tilde{y}_T)}{\xi_{b}(\tilde{y}_T)}> c_L T^{-1/(2-2\beta)}\right)\ge c_L,
\]
where the infimum extends over all estimators $\tilde{y}_T$. 
Finally, we verify that the rate of $T^{-1/2}$ obtained in \cite{chris23} for the general case is also minimax optimal over the set of all suitable payoff functions, cf.~Theorem \ref{thm: lower bound gen}.
		
\paragraph{Outline of the paper}
The paper is structured as follows.
In Section \ref{sec: upper bound1}, we introduce our estimator and provide a statistical analysis of the underlying invariant density and distribution function estimators. Section \ref{sec:upsimp} contains the rigorous introduction of the margin condition and the derivation of uniform non-asymptotic upper bounds for any $p$-th moment of simple regret as well as corresponding uniform PAC bounds. 
Section \ref{sec: lower bound} gives minimax lower bounds for the simple regret under the margin condition and also in a general framework. 
Implications of the results of the previous sections for the cumulative regret, both in terms of minimax lower bounds and upper bounds for a given exploration-exploitation type strategy, are presented in Section \ref{sec: cum}. The paper concludes with some closing remarks and a brief simulation study in Section \ref{sec: fin}.

\section{Upper bounds on the simple regret}\label{sec: upper bound}
The aim of this section is to provide the basic ingredients of a data-driven method for maximising the asymptotic growth rate described in \eqref{def:Phi}. 
Throughout this section, we assume that we are given a continuous record of observations $(X_s)_{0\le s\le T}$, $T>0$, of the uncontrolled diffusion process $\X$ with drift $b\in\Sigma(\mathbf C,A,\gamma)$ solving \eqref{eq: sde}.
Following the approach developed in \cite{chris23}, we first propose an estimator $\hat y_T$ and bound the corresponding expected simple regret, given by
\begin{equation}\label{def:simpreg}
\mathbb E_b\left[\Phi_b(g)-\frac{g}{\xi_b}(\hat y_T)\right],
\end{equation}
for a payoff function $g\in \cG(y_1,\zeta,M,\mathbf{C},A,\gamma)$, where
\begin{equation*}\label{eq: def g general}
\cG(y_1,\zeta,M,\mathbf{C},A,\gamma)\coloneqq\left\{g\in C((0,\infty)):\begin{cases}
				& g(0+)<0, y_1=\inf\{y>0:g(y)>0\},\\
				&\forall b\in \Sigma(\mathbf{C},A,\gamma): \frac{g(y)}{\xi_b(y)}\leq \sup_{z \in(0,\zeta]}\frac{g(z)}{\xi_b(z)} \,\forall y>0
				\\ &\sup_{z \in[y_1,\zeta]}\vert g(z)\vert\leq M
			\end{cases} \right\},
\end{equation*} 
with $0<y_1<\zeta<\infty$ and $0<M<\infty$. 
The set $\cG(\cdot)$ encodes all requirements on a payoff function such that \eqref{def:Phi} holds for all $b\in\Sigma(\mathbf{C},A,\gamma)$ (see Proposition 1 in \cite{chris23}). 
Intuitively, the parameters $y_1$ and $\zeta$ represent a priori known lower and upper bounds, respectively, for the true maximiser.
The construction of the estimator $\hat y_T$ now relies on a simple plug-in idea, exploiting the explicit form of $\xi_b(\cdot)$ introduced in \eqref{def:Phi} which, by Lemma 1 and equation (3.1) in \cite{chris23}, is given as
\begin{equation}\label{eq:xi}
\xi_b(x)\coloneqq 2\int_0^x\frac{1}{\rho_b(y)}\int_{-\infty}^y\rho_b(z)\d z\d y
=2\int_0^x\frac{F_b(y)}{\rho_b(y)}\d y, \quad x>0,
\end{equation}
 where $\rho_b$ and $F_b$ denote the density and distribution function, respectively, of the invariant measure $\mu_b$.
		
\subsection{Finite-sample analysis for invariant density and distribution function estimation}\label{sec: upper bound1}
Given the explicit form of $\xi_b$ in \eqref{eq:xi}, we have to find estimators for $\rho_b$ and $F_b$. 
In order to obtain uniform results over the class of drift functions $\Sigma(\mathbf{C},A,\gamma)$, we define the following nonparametric estimators.
Given a bounded kernel function $K\colon \R\to\R$ with compact support and arbitrary $x\in\R$, $T>0$, set
$K_T(x)\coloneq T^{1/2}K(xT^{1/2})$, and define
\begin{equation}\label{eq:def est}
\hat{\rho}_T(x)\coloneqq\frac{1}{T}\int_0^T K_T(x-X_s)\d s \quad  \text{ and } \quad		\hat{F}_T(x)\coloneq \frac 1 T \int_0^T \1_{(-\infty,x)}(X_s)\d s.
\end{equation}
A data-based counterpart of \eqref{eq:xi}, needed for estimating $g/\xi_b$ for a payoff function $g\in\cG(\cdot)$, is then obtained by setting		
\begin{equation*}
\hat{\xi}_T(x)\coloneqq 2\int_0^x\frac{\hat{F}_T(y)}{\hat{\rho}_T(y)\lor a}\d y\lor\frac{M_1}{2},\quad x\in[y_1,\zeta],
\end{equation*}
where $a,M_1>0$ are constants such that, for all $b\in\Sigma(\mathbf{C},A,\gamma)$,
\[\forall x\in[0,\zeta], \ \rho_b(x)\geq a\quad\mathrm{ and } \quad \forall x\in [y_1,\zeta], \ \xi_b(x)\geq M_1.\]
The existence of such constants has been discussed in \cite{chris23}.
Finally, choose 
\begin{equation}\label{def:yhat}
\hat y_T\in\operatorname{arg max}_{x\in[y_1,\zeta]}\frac{g}{\hat\xi_T}(x).
\end{equation}
Note that the estimators defined in \eqref{eq:def est} do not require any additional tuning parameters, which makes them straightforward to apply, in sharp contrast to usual nonparametric estimators. Furthermore, the following proposition will show that they are fully adaptive uniformly over the class of drift functions studied, and even achieve a parametric rate of convergence, despite being fully nonparametric. In addition, these results will imply bounds on any $p$-th moment of the simple regret of the estimator $\hat{y}_T$, which will not only allow us to derive uniform PAC bounds (see Corollary \ref{cor: pac gen}), but will also be of utmost importance for proving the upper bounds under the margin condition \ref{ann:margin} (stated in Theorem \ref{thm: upper bound}). 

Finally, the following results are all purely non-asymptotic, and in particular allow the previously mentioned PAC bounds to be stated under finite sample assumptions.
The proof of Proposition \ref{prop: est mom} is based on the results of \cite{aeck21} for invariant density estimation and \cite{kuto01} for the estimation of the distribution function. However, while \cite{kuto01} examines the $L^2$ error, we provide results on all $p$-th moments which are, to the best of our knowledge, novel.

\begin{proposition}\label{prop: est mom}
There exist constants $c_1,c_2,c_3>0$ such that, for any $b\in\Sigma(\mathbf{C},A,\gamma)$, $T>0$ and $p\ge1$, the following assertions hold.
\begin{enumerate}
\item[$\operatorname{(a)}$] 
For any $x\in \R$,
\begin{align*}
\E_b\left[\vert\hat{\rho}_T(x)-\rho_b(x)\vert^p\right]&\le c_1^p T^{-p/2}\left( p^{p/2}+ p^p T^{-p/2} \right),\\
\E_b\left[\vert\hat{F}_T(x)-F_b(x)\vert^p\right]&\le c_2^p T^{-p/2}\left( p^{p/2}+ p^p T^{-p/2} \right).
\end{align*}
\item[$\operatorname{(b)}$] 
For any $g\in \cG(y_1,\zeta,M,\mathbf{C},A,\gamma)$, it holds
\[
\E_b\left[\left(\Phi_b(g)-\frac{g}{\xi_b}(\hat{y}_T)\right)^p\right]\leq c_3^p T^{-p/2}\left( p^{p/2}+ p^p T^{-p/2} \right). 
\]
\end{enumerate}
\end{proposition}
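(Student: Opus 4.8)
The plan is to establish part (a) first, since part (b) will follow from it by a deterministic analysis of the map $(\rho,F)\mapsto g/\xi$ together with a triangle-type inequality. For the density estimator $\hat\rho_T$, I would build on the concentration results of \cite{aeck21}: the key point is that $\hat\rho_T(x)-\rho_b(x)$ decomposes into a bias term, which is $\cO(T^{-1/2})$ uniformly in $x$ and $b\in\Sigma(\mathbf C,A,\gamma)$ because of the bandwidth $h_T=T^{-1/2}$ and the smoothing properties of the invariant density, and a centered stochastic term $\frac1T\int_0^T(K_T(x-X_s)-\E_b[K_T(x-X_s)])\,\d s$. The latter is controlled via a martingale/occupation-time argument: one writes it through the local time or via the Poisson equation associated with the generator, obtaining a continuous martingale whose quadratic variation is of order $T^{-1}$ uniformly. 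A Burkholder--Davis--Gundy inequality then yields the $p$-th moment bound $c^pT^{-p/2}p^{p/2}$, and the extra $p^pT^{-p}$ term arises from the remainder of the Poisson-equation decomposition (the boundary terms $u(X_0)-u(X_T)$ after normalisation by $1/T$), which are bounded in every moment by the exponential moment bounds available for $\X$ under $\Sigma(\mathbf C,A,\gamma)$. The same scheme applies verbatim to $\hat F_T$, replacing the kernel $K_T(x-\cdot)$ by $\1_{(-\infty,x)}$; here the relevant function in the Poisson equation is even more explicit, and the argument of \cite{kuto01} gives the $L^2$ statement, which I would upgrade to general $p$ by the same BDG-plus-boundary-term reasoning.

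For part (b), I would proceed deterministically on the event nothing special happens and exploit Lipschitz-type stability of $x\mapsto g(x)/\hat\xi_T(x)$ in the estimation errors. Write $\Phi_b(g)-\frac{g}{\xi_b}(\hat y_T) = \big(\frac{g}{\xi_b}(y^\ast)-\frac{g}{\hat\xi_T}(y^\ast)\big) + \big(\frac{g}{\hat\xi_T}(y^\ast)-\frac{g}{\hat\xi_T}(\hat y_T)\big) + \big(\frac{g}{\hat\xi_T}(\hat y_T)-\frac{g}{\xi_b}(\hat y_T)\big)$, where $y^\ast\in\arg\max_{[y_1,\zeta]}g/\xi_b$. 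The middle bracket is $\le0$ by the very definition \eqref{def:yhat} of $\hat y_T$, so the simple regret is bounded by $2\sup_{x\in[y_1,\zeta]}\big|\frac{g}{\xi_b}(x)-\frac{g}{\hat\xi_T}(x)\big|$. Since $|g|\le M$ on $[y_1,\zeta]$ and both $\xi_b$ and $\hat\xi_T$ are bounded below (by $M_1$ and $M_1/2$ respectively, by construction), this supremum is controlled by a constant multiple of $\sup_{x\in[y_1,\zeta]}|\hat\xi_T(x)-\xi_b(x)|$, and the latter, via \eqref{eq:xi} and the truncations $\hat\rho_T\lor a$, is bounded by $C\int_{0}^{\zeta}\big(|\hat F_T(y)-F_b(y)| + |\hat\rho_T(y)-\rho_b(y)|\big)\,\d y$ using the elementary bound $|\frac{F}{\rho}-\frac{\hat F}{\hat\rho\lor a}|\le a^{-1}|F-\hat F| + Fa^{-1}(\rho\lor a)^{-1}|\rho-(\hat\rho\lor a)|$ and the uniform upper bound on $F_b=F_b\le 1$ together with $\rho_b\ge a$ on $[0,\zeta]$. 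Taking $p$-th moments, applying Jensen/Minkowski to pull the moment inside the integral over the compact interval $[0,\zeta]$, and invoking part (a) pointwise (the bounds there are uniform in $x$) gives the claimed bound $c_3^pT^{-p/2}(p^{p/2}+p^pT^{-p/2})$.

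I expect the main obstacle to be the $p$-th moment control of the stochastic term in part (a) with the correct, explicit dependence on $p$: one needs the constant in front to be of the form $c^p$ (not, say, $c^p\sqrt{p!}$ or worse) and the two-term structure $p^{p/2}+p^pT^{-p/2}$, which forces a careful bookkeeping of the BDG constants (whose optimal order is $\sqrt p$) against the boundary remainder (which contributes the $p^p$ through sub-exponential, rather than sub-Gaussian, tails of $u(X_0)-u(X_T)$). Getting this uniformly over the noncompact class $\Sigma(\mathbf C,A,\gamma)$ requires uniform control of the moments of $\X$ and of the solution $u$ to the Poisson equation, which is where the drift condition $b(x)\operatorname{sgn}(x)\le-\gamma$ for $|x|>A$ does the work; I would cite the relevant uniform moment and mixing estimates from \cite{aeck21} (and the analogous ones underlying \cite{kuto01}) rather than reprove them. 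The deterministic reduction in part (b) is, by contrast, routine once the stability bound on $x\mapsto g/\hat\xi_T$ is set up carefully, taking care that the constants depend only on $a$, $M$, $M_1$, $y_1$, $\zeta$ and not on $b$ or $g$ within the prescribed classes.
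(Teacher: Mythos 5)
Your proposal is correct and follows essentially the same route as the paper: part (a) via the bias/stochastic-term split for $\hat\rho_T$ (with the moment bound taken from \cite{aeck21}) and the It\^o/martingale decomposition of \cite{kuto01} for $\hat F_T$, upgraded to all $p$-th moments by BDG with constant of order $p^{p/2}$ (Barlow--Yor) plus sub-exponential moment control of the boundary term, which is exactly where the $p^{p/2}$ and $p^pT^{-p/2}$ terms come from in the paper as well. Part (b) likewise matches the paper's argument: the argmax comparison makes the middle term nonpositive, reducing the regret to $2\sup_{[y_1,\zeta]}|g/\xi_b-g/\hat\xi_T|$, which is then controlled by $\sup|\xi_b-\hat\xi_T|$, the integrated pointwise errors of $\hat F_T$ and $\hat\rho_T$ (using $\rho_b\ge a$, $F_b\le1$, $\xi_b\ge M_1$, $\hat\xi_T\ge M_1/2$), and part (a).
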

\begin{proof}
The proof of part (a) can be found in Appendix \ref{app:A}.
For verifying (b), note that, by definition and abbreviating $\Sigma=\Sigma(\mathbf C,A,\gamma)$,
\begin{align*}
&\sup_{b\in\Sigma}\E_b\left[\left(\Phi_b(g)-\frac{g}{\xi_b}(\hat{y}_T)\right)^p\right]\\
&\quad\le \sup_{b\in\Sigma}\E_b\left[\left(\Phi_b(g)-\frac{g}{\xi_b}(\hat{y}_T)+\frac{g}{\hat{\xi}_T(\hat{y}_T)}-\frac{g}{\hat{\xi}_T}(y^*)\right)^p\right]\\
&\quad\le 2^p\sup_{b\in\Sigma}\E_b\left[\sup_{y\in[y_1,\zeta]}\left\vert\frac{g}{\xi_b}(y)-\frac{g}{\hat{\xi}_T}(y)\right\vert^p\right]\\
&\quad\leq 4^p M^p M_1^{-2p}\sup_{b\in\Sigma}\E_b\left[\sup_{y\in[y_1,\zeta]}\left\vert\xi_b(y)-\hat{\xi}_T(y)\right\vert^p\right]\\
&\quad\leq 8^p M^p M_1^{-2p}\sup_{b\in\Sigma}\E_b\left[\sup_{y\in[y_1,\zeta]}\left\vert\int_0^y \frac{F_b(z)}{\rho_b(z)}-\frac{\hat{F}_T(z)}{\hat{\rho_T}(z)\lor a}\d z\right\vert^p\right]\\
&\quad\le 8^p M^p M_1^{-2p}\zeta^{p-1}\sup_{b\in\Sigma}\sup_{y\in[y_1,\zeta]}\E_b\left[\left\vert \frac{F_b(y)}{\rho_b(y)}-\frac{\hat{F}_T(y)}{\rho_b(y)}+\frac{\hat{F}_T(y)}{\rho_b(y)}-\frac{\hat{F}_T(y)}{\hat{\rho_T}(y)\lor a}\right\vert^p\right]\\
&\quad\le\frac 1 2 16^p M^p M_1^{-2p}\zeta^{p-1}\sup_{b\in\Sigma}\sup_{y\in[y_1,\zeta]}\E_b\left[a^{-p}\left\vert F_b(y)-\hat{F}_T(y)\right\vert^p+a^{-2p}\left\vert\rho_b(y)-\hat{\rho_T}(y)\right\vert^p\right]\\
&\quad\leq\frac 1 2 16^p M^p M_1^{-2p}\zeta^{p-1}\left(a^{-2p}c_1^p +a^{-p}c_2^p\right)T^{-p/2}\left( p^{p/2}+ p^p T^{-p/2}\right),
\end{align*}
where we used the moment bounds from part (a) in the last step. 
This completes the proof of (b).
\end{proof}
		
\subsection{Upper bound on the simple regret under the margin condition}\label{sec:upsimp}
It turns out that the general results on the rate of convergence of the simple regret (recall \eqref{def:simpreg}) obtained in \cite{chris23} can be  improved significantly for certain payoff functions through a more refined analysis.
More concretely, our extension of Proposition 2 in \cite{chris23} contained in Proposition \ref{prop: est mom} implies that, for any $p\geq1$,
\begin{equation}\label{eq:upgen}
\sup_{b\in\Sigma(\mathbf C,A,\gamma)}\sup_{g\in\cG(y_1,\zeta,M,\mathbf{C},A,\gamma)}\E_b\left[\left(\Phi_b(g)-\frac{g}{\xi_b}(\hat y_T)\right)^p\right]^{\frac{1}{p}}\in\cO(T^{-1/2}).
\end{equation}
Now, for $\Delta_0\in(0,1)$, $n\in\N$, $\eta,\beta>0$, define 
\begin{equation}\label{def:setT}
\mathcal{T}(\Delta_0,n,\eta,\beta)\coloneq\left\{f\in C((0,\infty)) \text{ fulfills Assumption }\ref{ann:margin}\right\},
\end{equation}  
i.e., $\mathcal{T}$ contains all continuous functions on $(0,\infty)$ that satisfy the margin condition.
As our interest is focused on maximising the expected payoff per time, which obviously depends on the diffusion process and its drift, it is natural to require conditions also depending on the underlying diffusion process.
Therefore, for any drift function $b\in \Sigma(\mathbf{C},A,\gamma)$, our investigated class of payoff functions is given by
\begin{equation}\label{def:setG}
\cG_b(y_1,\zeta,M,\Delta_0,n,\eta,\beta)\coloneq\left\{g\in\cG(y_1,\zeta,M,\mathbf{C},A,\gamma): \frac{g}{\xi_b}\in \mathcal{T}(\Delta_0,n,\eta,\beta) \right\},  
\end{equation}
i.e., the class of payoff functions satisfying the assumptions of \cite{chris23} together with the margin condition specified in Assumption \ref{ann:margin} for the expected payoff per time.
Some examples of payoff functions fulfilling this condition for varying values of $\beta$ can be found in Figure \ref{fig: payoff}.

\begin{figure}[h!]
\includegraphics[width=\linewidth]{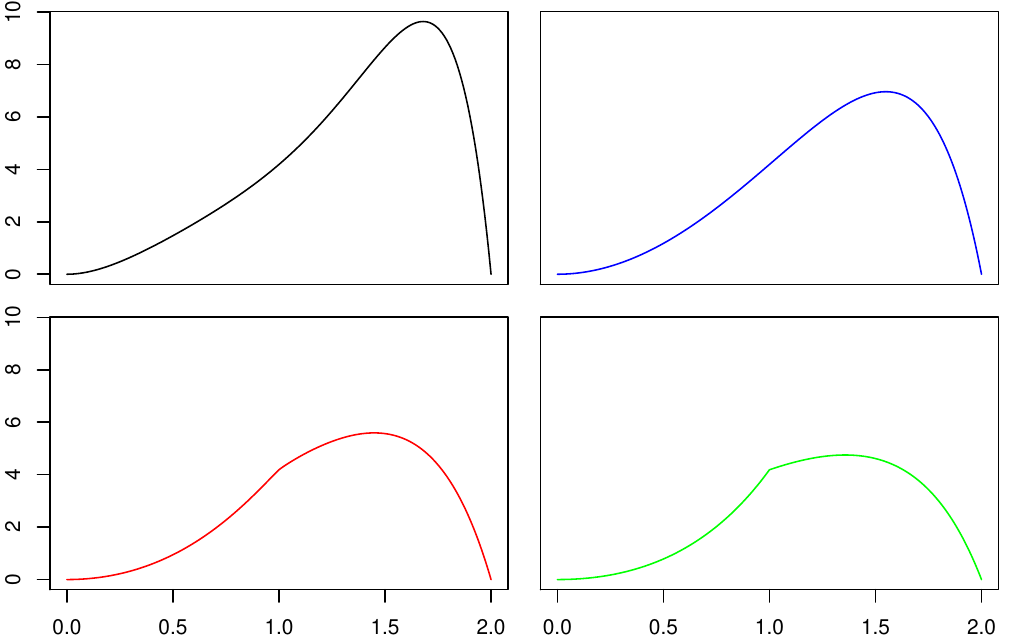}
\caption{Plots of the function $g(x)=(1-\vert1-x\vert^{1/\beta})\xi_b(x),$ for $\beta=0.25, \textcolor{blue}{0.5}, \textcolor{red}{0.75}, \textcolor{green}{1}$, with $b(x)=-x/2$.}
\label{fig: payoff}
\end{figure}

Before stating and proving our upper bounds on the simple regret, we formulate a crucial auxiliary result on moment bounds, restricted to the event $\hat{y}_T\in\mathcal{E}(\Delta),$ where
\[
\mathcal E(\Delta)\coloneqq	\left\{y\in[y_1,\zeta]\colon\Phi_b(g)-\frac{g}{\xi_b}(y)\le \Delta\right\}, \quad \Delta\in(0,1).
\]
Its proof, which is deferred to Appendix \ref{app:A}, uses both the powerful moment bounds of Proposition \ref{prop: est mom} and implications of the margin condition specified in Assumption \ref{ann:margin}.
		
\begin{lemma}\label{lemma: local upper bound}
Let $0<y_1<\zeta<\infty$, $n\in \N$, $M,\eta,\beta>0$, and $\Delta_0\in(0,1)$ be given. 
Then, there exist constants $c_{1,*},c_{2,*}>0$ such that, for any $p\geq 1$, $T>0$, $0<\Delta\leq\Delta_0$, and $\hat y_T$ defined according to \eqref{def:yhat}, we have
\begin{equation*}
\sup_{b\in\Sigma(\mathbf{C},A,\gamma)}\sup_{g\in  \cG_b} \E_b\left[ \left\vert\Phi_b(g)-\frac{g}{\xi_b}(\hat{y}_T) \right\vert^p\1_{ \cE(\Delta)}(\hat{y}_T)  \right]
\leq c_{1,*}c_{2,*}^p(\Delta^p+\Delta^{\beta p})T^{-\frac p2}\left( p^{\frac p2}+ p^p T^{-\frac p2}\right),
\end{equation*}
where $\mathcal G_b=\cG_b(y_1,\zeta,M,\Delta_0,n,\eta,\beta)$ is defined according to \eqref{def:setG}.
\end{lemma}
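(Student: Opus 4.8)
\emph{The plan.} Write $R:=\Phi_b(g)-\frac{g}{\xi_b}(\hat y_T)$ for the (random) regret. On the event $\{\hat y_T\in\cE(\Delta)\}$ two facts are available: by definition $R\le\Delta$; and, since $g\in\cG_b$ forces $\frac{g}{\xi_b}\in\mathcal{T}(\Delta_0,n,\eta,\beta)$ and $\cE(\Delta)=\cX_{g/\xi_b}(\Delta)\cap[y_1,\zeta]$, Assumption \ref{ann:margin} (applicable as $0<\Delta\le\Delta_0$) gives a maximiser $y^*\in\{x_1,\dots,x_n\}\subseteq[y_1,\zeta]$ of $g/\xi_b$ with $|\hat y_T-y^*|<\tfrac12\eta\Delta^\beta$ and $\frac{g}{\xi_b}(y^*)=\Phi_b(g)$. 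Since $\hat y_T$ maximises $g/\hat\xi_T$ over $[y_1,\zeta]$ and $y^*\in[y_1,\zeta]$, setting $\phi:=\frac{g}{\xi_b}-\frac{g}{\hat\xi_T}$ yields
\[
R=\frac{g}{\xi_b}(y^*)-\frac{g}{\xi_b}(\hat y_T)\le \phi(y^*)-\phi(\hat y_T),
\]
so everything reduces to estimating the increment of $\phi$ between the two $\tfrac12\eta\Delta^\beta$-close points $y^*$ and $\hat y_T$.

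Next I would decompose this increment. With $\delta:=\hat\xi_T-\xi_b$ and $G:=(\xi_b\hat\xi_T)^{-1}$ we have $\phi=g\,\delta\,G$, and telescoping gives
\[
\phi(y^*)-\phi(\hat y_T)=\bigl[g(y^*)-g(\hat y_T)\bigr]\delta(y^*)G(y^*)+g(\hat y_T)\bigl[\delta(y^*)-\delta(\hat y_T)\bigr]G(y^*)+g(\hat y_T)\delta(\hat y_T)\bigl[G(y^*)-G(\hat y_T)\bigr].
\]
On $[y_1,\zeta]$ one has $\xi_b\ge M_1$, $\hat\xi_T\ge M_1/2$, $|g|\le M$, and both $\xi_b$ and $\hat\xi_T$ are $\tfrac2a$-Lipschitz (their (sub)derivatives are ratios with $F$-type numerators bounded by $1$ and $\rho$-type denominators bounded below by $a$); moreover $g=\tfrac{g}{\xi_b}\cdot\xi_b$ together with $\tfrac{g}{\xi_b}(y^*)-\tfrac{g}{\xi_b}(\hat y_T)=R\le\Delta$ gives $|g(y^*)-g(\hat y_T)|\le C_1(\Delta^\beta+\Delta)$, and likewise $|G(y^*)-G(\hat y_T)|\le C_1\Delta^\beta$. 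Hence, with $\overline D:=\sup_{y\in[y_1,\zeta]}|\hat\xi_T(y)-\xi_b(y)|$, the first and third summands are at most $C_2(\Delta^\beta+\Delta)\overline D$; and $\E_b[\overline D^p]\le C_3^p\,T^{-p/2}(p^{p/2}+p^pT^{-p/2})$ is exactly what is established en route to Proposition \ref{prop: est mom}(b) (bound $\sup_y|\int_0^y(\cdot)|$ by $\int|\cdot|$, then Jensen and part (a)).

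The delicate term is the middle one, $\delta(y^*)-\delta(\hat y_T)$: the bare Lipschitz estimate $|\delta(y^*)-\delta(\hat y_T)|\le C\Delta^\beta$ does not decay in $T$, so the estimation error must stay visible. I would split according to whether both $2\int_0^{y^*}\frac{\hat F_T}{\hat\rho_T\lor a}\d z$ and $2\int_0^{\hat y_T}\frac{\hat F_T}{\hat\rho_T\lor a}\d z$ exceed $M_1/2$. On this event the truncation $\lor\tfrac{M_1}{2}$ in $\hat\xi_T$ is inactive at both points, whence $\delta(y^*)-\delta(\hat y_T)=2\int_{\hat y_T}^{y^*}\psi\,\d z$ with $\psi:=\frac{\hat F_T}{\hat\rho_T\lor a}-\frac{F_b}{\rho_b}$; since $\hat y_T,y^*\in[y_1,\zeta]$ and both lie within $\tfrac12\eta\Delta^\beta$ of some $x_i$, the \emph{random} interval of integration is contained in the fixed set $\bigcup_{i=1}^n\bigl([y_1,\zeta]\cap(x_i-\tfrac12\eta\Delta^\beta,\,x_i+\tfrac12\eta\Delta^\beta)\bigr)$, so $|\delta(y^*)-\delta(\hat y_T)|\le 2J(\Delta)$ for the \emph{deterministic} quantity $J(\Delta):=\int|\psi|\,\d z$ over that set. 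Then Jensen's inequality, the pointwise bound $|\psi(z)|\le a^{-1}|\hat F_T(z)-F_b(z)|+a^{-2}|\hat\rho_T(z)-\rho_b(z)|$ (valid on $[y_1,\zeta]$), and Proposition \ref{prop: est mom}(a) give $\E_b[J(\Delta)^p]\le C_4^p\,\Delta^{\beta p}\,T^{-p/2}(p^{p/2}+p^pT^{-p/2})$. On the complementary event one of the two integrals is $<M_1/2$, which (since $\xi_b\ge M_1$ on $[y_1,\zeta]$) forces $\overline D\ge M_1/2$; there I simply use $R\le\Delta$, so the contribution is at most $\Delta^p\,\P_b(\overline D\ge M_1/2)\le\Delta^p(2/M_1)^p\,\E_b[\overline D^p]$.

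Finally, putting the three summands together with $(x+y+z)^p\le 3^{p-1}(x^p+y^p+z^p)$ and $(\Delta^\beta+\Delta)^p\le 2^{p-1}(\Delta^{\beta p}+\Delta^p)$, and noting that all constants ($a,M,M_1,\zeta,\eta,n$ and those of Proposition \ref{prop: est mom}) are uniform over $b\in\Sigma(\mathbf C,A,\gamma)$ and $g\in\cG_b$, one obtains the claimed inequality with suitable $c_{1,*},c_{2,*}$. The main obstacle is this middle term: getting a genuinely $T$-decaying estimate of $\delta(y^*)-\delta(\hat y_T)$ requires passing from the random integration interval to its deterministic majorant \emph{before} taking expectations, while correctly absorbing the truncations $\lor\tfrac{M_1}{2}$ and $\lor a$ built into $\hat\xi_T$.
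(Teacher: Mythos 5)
Your argument is correct and follows essentially the same route as the paper's proof: localize $\hat y_T$ near a maximiser via the margin condition, use the optimality of $\hat y_T$ for $g/\hat\xi_T$, and split the resulting increment into a $\Delta\cdot\sup_{[y_1,\zeta]}|\hat\xi_T-\xi_b|$ term, a $\Delta^\beta\cdot\sup_{[y_1,\zeta]}|\hat\xi_T-\xi_b|$ term, and the difference-of-differences $(\hat\xi_T-\xi_b)(y^*)-(\hat\xi_T-\xi_b)(\hat y_T)$, which is controlled by integrating $\big|\tfrac{\hat F_T}{\hat\rho_T\lor a}-\tfrac{F_b}{\rho_b}\big|$ over a deterministic set of measure $\mathcal O(\Delta^\beta)$ and invoking Proposition \ref{prop: est mom}. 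Your treatment of the truncation $\lor\tfrac{M_1}{2}$ (event split plus Markov via $\sup|\hat\xi_T-\xi_b|\ge M_1/2$) is only a cosmetic variant of the paper's argument based on the identity $\max(a,b)=\tfrac12(a+b+|a-b|)$ and the indicator at $y_1$, and works equally well.
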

		
Combining the previous lemma with an application of the peeling device (see \cite{vandegeer}), we are able to show the following results for the simple regret under the margin condition.
		
\begin{theorem}\label{thm: upper bound}
Grant the assumptions of Lemma \ref{lemma: local upper bound}.
Then, there exists a constant $C_1\geq 1$ such that, for any $T>0$, $\beta>0$, $p\geq 1$, and $\mathcal G_b(\beta)\coloneqq \cG_b(y_1,\zeta,M,\Delta_0,n,\eta,\beta)$ defined according to \eqref{def:setG},
\begin{equation}\begin{split}\label{eq: upp 1}
&\sup_{b\in\Sigma(\mathbf{C},A,\gamma)}\sup_{g\in  \cG_b(\beta)}\E_b\left[\left(\Phi_b(g)-\frac{g}{\xi_{b}}(\hat{y}_T)\right)^p\right]\\
&\quad\leq \inf_{0< \alpha< \beta\land 1}C_1^{\frac{p}{1-\alpha}}T^{-\frac{p}{2-2\alpha}}\left(\left(\frac{p}{1-\alpha}\right)^{\frac{p}{2-2\alpha}}+\left(\frac{p}{1-\alpha}\right)^{\frac{p}{1-\alpha}}T^{-\frac{p}{2-2\alpha}}\right).
\end{split}\end{equation}
In particular, for $0<\beta<1$, 
\begin{equation}\label{eq: upp 2} \sup_{b\in\Sigma(\mathbf{C},A,\gamma)}\sup_{g\in \cG_b(\beta)}\E_b\left[\Phi_b(g)-\frac{g}{\xi_{b}}(\hat{y}_T)\right]\in\cO\left(T^{-\frac{1}{2-2\beta}}\right).
\end{equation}
For $\beta\geq 1$, there exists a constant $C_2>0$ such that
\begin{equation}\label{eq: upp 3}
\sup_{b\in\Sigma(\mathbf{C},A,\gamma)}\sup_{g\in \cG_b(\beta)}\E_b\left[\Phi_b(g)-\frac{g}{\xi_{b}}(\hat{y}_T)\right]\in\cO\left(\e^{-C_2T}\right).
\end{equation}
\end{theorem}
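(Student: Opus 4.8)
The plan is to combine Lemma \ref{lemma: local upper bound}, which controls the $p$-th moment of the simple regret on the event $\{\hat y_T\in\cE(\Delta)\}$, with a dyadic peeling of the complementary events $\{\hat y_T\notin\cE(\Delta)\}$. First I would fix $\alpha\in(0,\beta\wedge 1)$ and a threshold level $\Delta_T$, to be tuned later, and split the expectation as
\begin{equation*}
\E_b\left[\left(\Phi_b(g)-\tfrac{g}{\xi_b}(\hat y_T)\right)^p\right]
= \E_b\left[(\cdots)^p\1_{\cE(\Delta_T)}(\hat y_T)\right]
+ \sum_{j\ge 0}\E_b\left[(\cdots)^p\1_{\cE(2^{j+1}\Delta_T)\setminus\cE(2^j\Delta_T)}(\hat y_T)\right],
\end{equation*}
where the sum is finite since the simple regret is bounded by $\Phi_b(g)\le M/M_1=:R$ (uniformly over the class), so only dyadic levels up to $\co(\log(R/\Delta_T))$ contribute. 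On the $j$-th shell the regret is at most $2^{j+1}\Delta_T$, so that term is bounded by $(2^{j+1}\Delta_T)^p\,\P_b(\hat y_T\notin\cE(2^j\Delta_T))$; using Markov's inequality together with Lemma \ref{lemma: local upper bound} applied at level $\Delta=2^j\Delta_T$ (more precisely, $\P_b(\hat y_T\notin\cE(\Delta))\le \P_b(|\Phi_b(g)-\tfrac{g}{\xi_b}(\hat y_T)|\1_{\cE(\Delta')}>\Delta)$ for $\Delta'$ slightly larger, applied with a large moment order $q$) gives a bound of the form $(2^j\Delta_T)^{p-q}\cdot c_{1,*}c_{2,*}^q((2^j\Delta_T)^q+(2^j\Delta_T)^{\beta q})T^{-q/2}(q^{q/2}+q^qT^{-q/2})$. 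Choosing $q$ suitably and summing the geometric-type series, the dominant contribution comes from the leading shell, and one is led to balance $\Delta_T^p$ (from the central term, which by the lemma at level $\Delta_T$ is $\lesssim \Delta_T^{\beta p}\,(\text{stat. rate})^p$) against the tail: the optimal choice is essentially $\Delta_T \asymp (T^{-1/2}\cdot\text{poly}(p,\tfrac{1}{1-\alpha}))^{1/(1-\alpha)}$, since the margin exponent upgrades the ambient $T^{-1/2}$ rate to $T^{-1/(2-2\alpha)}$. Carrying the $p$-dependence carefully through the peeling (the number of shells and the required moment order both scale like $1/(1-\alpha)$) yields the stated inequality \eqref{eq: upp 1} with the explicit $(p/(1-\alpha))$-powers, and then optimizing over $\alpha<\beta\wedge 1$ gives the infimum form.

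For the two corollaries: inequality \eqref{eq: upp 2} follows by taking $p=1$ and letting $\alpha\uparrow\beta$ in \eqref{eq: upp 1}, since for $\beta<1$ the exponent $T^{-1/(2-2\alpha)}$ is continuous at $\alpha=\beta$ and the $T^{-1/(1-\alpha)}$ term is of strictly smaller order; this recovers $\co(T^{-1/(2-2\beta)})$. For \eqref{eq: upp 3}, with $\beta\ge 1$ one is free to pick $\alpha$ arbitrarily close to $1$; the issue is that the constants blow up as $\alpha\to 1$, so instead one picks $\alpha=\alpha_T$ depending on $T$, say $1-\alpha_T\asymp 1/\log T$, so that $T^{-1/(2-2\alpha_T)}=T^{-(\log T)/c}=\e^{-c^{-1}(\log T)^2}$ decays faster than any polynomial — but to actually get the clean $\e^{-C_2T}$ one should rather revisit the peeling directly in the case $\beta\ge 1$: here $\cX_f(\Delta)$ shrinks at least linearly, so the margin effectively forces $\hat y_T$ to land in a window of radius $\co(\Delta)$, and the event $\{\hat y_T\notin\cE(\Delta)\}$ is contained in an event of the type $\{\sup_{y}|\xi_b(y)-\hat\xi_T(y)|\gtrsim\Delta\}$; plugging in a Gaussian-type concentration bound for $\hat\rho_T,\hat F_T$ (which the sub-exponential moment structure $p^{p/2}+p^pT^{-p/2}$ in Proposition \ref{prop: est mom} encodes, via $\P(|Z|>t)\le \exp(-ct^2T)$ for moderate $t$) gives $\P_b(\hat y_T\notin\cE(\Delta))\lesssim \exp(-cT\Delta^2)$, and then $\E_b[\text{regret}]\le \Delta_0 + R\exp(-cT\Delta_0^2/\eta^{2})$ — wait, more carefully, one integrates $\int_0^R \P_b(\text{regret}>t)\,dt$ and the margin condition lets one relate $t$ to the deviation, yielding an exponential in $T$.

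The main obstacle I expect is bookkeeping the $p$- and $\alpha$-dependence of the constants through the peeling argument so that they come out exactly as the $(p/(1-\alpha))$-powers in \eqref{eq: upp 1}: one must choose the auxiliary moment order $q$ in the tail estimates as a function of $p$, $\alpha$, and the number of shells in a way that keeps the geometric sum convergent with a geometric ratio bounded away from $1$ uniformly, while not losing more than constant factors. A secondary subtlety is the passage $\alpha\uparrow\beta$ in \eqref{eq: upp 2}: since $\cG_b(\beta)\subseteq\cG_b(\alpha)$ is false in general (smaller $\beta$ is a weaker condition, so actually $\cG_b(\beta)\subseteq\cG_b(\alpha)$ for $\alpha\le\beta$ does hold), one should check that Lemma \ref{lemma: local upper bound} applied with exponent $\alpha<\beta$ is legitimate for $g\in\cG_b(\beta)$ — which it is, because a function satisfying the margin condition with exponent $\beta$ also satisfies it with any smaller exponent on a possibly smaller $\Delta_0$, and this only affects constants. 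Finally, for \eqref{eq: upp 3} the cleanest route is the direct exponential-concentration argument sketched above rather than taking a limit in \eqref{eq: upp 1}, and I would present it as a short separate computation using Proposition \ref{prop: est mom} with $p\to\infty$ optimization, i.e. $\inf_{p\ge 1}c^pT^{-p/2}(p^{p/2}+p^pT^{-p/2})$ evaluated at $p\asymp\sqrt T$ gives $\e^{-c'T}$, combined with the margin-condition reduction of the regret to this deviation.
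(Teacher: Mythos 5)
For \eqref{eq: upp 1} and \eqref{eq: upp 2} your plan is essentially the paper's proof: a dyadic peeling of the regret, with Lemma \ref{lemma: local upper bound} applied on each shell after trading the indicator for a higher-moment Markov factor, the global moment bound of Proposition \ref{prop: est mom}(b) for the far tail, the balance $\Delta_T\asymp T^{-1/(2-2\alpha)}$, and the monotonicity $\cG_b(\beta)\subseteq\cG_b(\alpha)$ for $\alpha\le\beta$ (which indeed holds with the same $\Delta_0,\eta$, since $\Delta\le\Delta_0<1$ gives $\Delta^\beta\le\Delta^\alpha$). Two presentational slips, not gaps: the local lemma is only valid for $\Delta\le\Delta_0$, so the shells above $\Delta_0$ must be handled by Proposition \ref{prop: est mom}(b) (the paper's term $\operatorname{(II)}$), and the inequality you write, $\P_b(\hat y_T\notin\cE(\Delta))\le\P_b(|\Phi_b(g)-\tfrac{g}{\xi_b}(\hat y_T)|\1_{\cE(\Delta')}>\Delta)$, is false as stated (if the regret exceeds $\Delta'$ the indicator vanishes); what is true, and all your shell decomposition needs, is the same bound for $\P_b(\hat y_T\in\cE(\Delta')\setminus\cE(\Delta))$.

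The genuine gap is in \eqref{eq: upp 3}. First, you abandoned the route through \eqref{eq: upp 1} prematurely: having tried only $1-\alpha_T\asymp1/\log T$ you concluded that this route cannot reach $\e^{-C_2T}$, but the explicit $(p/(1-\alpha))$-powers are kept precisely so that one may take $1-\alpha_T=c_{\exp}/T$. Then the first summand becomes $\exp\big(\tfrac{pT}{2c_{\exp}}\log(C_1^2p/c_{\exp})\big)$ (and the second $\exp\big(\tfrac{pT}{c_{\exp}}\log(C_1p/c_{\exp})\big)$), and the choice $c_{\exp}=2pC_1^2$ turns the logarithms into $-\log 2$ resp.\ $-\log(2C_1)$, giving exactly \eqref{eq: upp 3}; this is the paper's argument. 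Second, your substitute argument, as sketched, would not deliver the exponential rate: the inclusion $\{\hat y_T\notin\cE(\Delta)\}\subseteq\{\sup_y|\xi_b(y)-\hat\xi_T(y)|\ge c\Delta\}$ is the margin-free estimate, and integrating tails of the form $\exp(-cT t^2)$ over $t\in(0,R]$ returns only $T^{-1/2}$. The exponential gain for $\beta\ge1$ must come from the local, self-bounding use of the margin encoded in Lemma \ref{lemma: local upper bound}: on $\cE(\Delta)$ with $\Delta\le\Delta_0<1$ and $\beta\ge1$ the regret is dominated by $\Delta$ times the estimation error, so a nonzero regret of size $t\le\Delta_0$ forces an estimation error of order one \emph{uniformly in} $t$, and it is this $t$-independent deviation event whose probability one drives to $\e^{-cT}$. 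Third, the moment optimization is quantitatively off: $\inf_{p\ge1}c^pT^{-p/2}(p^{p/2}+p^pT^{-p/2})$ evaluated at $p\asymp\sqrt T$ gives only $\e^{-c'\sqrt T\log T}$; to obtain $\e^{-c'T}$ one must take $p\asymp T$, since then $(c\sqrt{p/T})^{p}=(c\sqrt\ep)^{\ep T}$ with $\ep$ small. With these corrections your direct argument can be repaired, but as written it does not establish \eqref{eq: upp 3}.
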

\begin{proof}
We first consider the case $\beta\in(0,1)$. 
Let $u\geq 1$ be given. 
For $r\geq 1$, $T>0$, set 
   \[\theta(r,T)\coloneq T^{-\frac{r}{2-2\beta}} \left(\left(\frac{r}{1-\beta}\right)^{r/(2-2\beta)}+\left(\frac{r}{1-\beta}\right)^{r/(1-\beta)}T^{-r/(2-2\beta)}\right) .\]
Then, for preparing the peeling argument, define the sets
\[
B_{j,T}\coloneq \left\{y\in[y_1,\zeta]\colon 2^{j-1}<\theta(u,T)^{-1} \Big(\Phi_b(g)-\frac{g}{\xi_b}(y)\Big)^u \le 2^{j}\right\},\quad j\in\N,\ T>0.
\]
Now,
\begin{equation}\label{ineq:decomp}
\E_b\left[\left(\Phi_b(g)-\frac{g}{\xi_{b}}(\hat{y}_T)\right)^u\right] \le \theta(u,T)+ \operatorname{(I)}+\operatorname{(II)},
\end{equation}
with
\begin{align*}\nonumber
\operatorname{(I)}&\coloneqq 
\sum_{j\in \N: 2^{j}\le\Delta^u_0\theta(u,T)^{-1}}\E_b\left[\Big(\Phi_b(g)-\frac{g}{\xi_{b}}(\hat{y}_T)\Big)^u\ \1_{\hat{y}_T\in B_{j,T}}\right],\\
\operatorname{(II)}&\coloneqq
\E_b\left[\Big(\Phi_b(g)-\frac{g}{\xi_b}(\hat{y}_T)\Big)^u\ \1_{\Phi_b(g)-(g/\xi_b)(\hat{y}_T)\geq \Delta_0 }\right].
\end{align*}
By Lemma \ref{lemma: local upper bound}, it holds for $p=(1+\beta)/(1-\beta)$,
\begin{align}\nonumber
\operatorname{(I)}&\le
\sum_{j\in \N: 2^j\leq\Delta^u_0 \theta(u,T)^{-1}}\theta(u,T)^{-p}2^{-p(j-1)}\E_b\left[(\Phi_b(g)-\frac{g}{\xi_{b}}(\hat{y}_T))^{u(p+1)}\1_{(\Phi_b(g)-(g/\xi_b)(\hat{y}_T))\leq T^{-\frac{1}{2-2\beta}} 2^{j/u}  }\right]\\
&\le \nonumber 2c_{1,*}c_{2,*}^{u(p+1)}
\sum_{j\in \N: 2^j\leq\Delta^u_0 \theta(u,T)^{-1}}\Big(\theta(u,T)^{-p}2^{-p(j-1)}(\theta(u,T) 2^{j})^{\beta (p+1)}T^{-u(p+1)/2}\\\nonumber&\quad\times\left( (u(p+1))^{u(p+1)/2}+ (u(p+1))^{u(p+1)} T^{-u(p+1)/2}\right)\Big)
\\
&=\nonumber 2^{p+1}c_{1,*}c_{2,*}^{u(p+1)} \theta(u,T)^{\beta(p+1)-p}\theta(2u,T)
\sum_{j\in \N: 2^j\leq\Delta^u_0 \theta(u,T)^{-1}}2^{j(\beta(p+1)-p)}
\\
&\leq \nonumber 2^{p+1+2u/(1-\beta)}c_{1,*}c_{2,*}^{u(p+1)} \theta(u,T)^{\beta(p+1)-p+2}\sum_{j\in \N: 2^j\leq\Delta^u_0 \theta(u,T)^{-1}}2^{-j}
\\&\leq 2^{5u/(1-\beta)}c_{1,*}c_{2,*}^{2u/(1-\beta)} \theta(u,T).\label{bound:I}
\end{align}
Furthermore, by Proposition \ref{prop: est mom}, we obtain for $q=1/(1-\beta)$
\begin{align}
\operatorname{(II)}&\le \Delta_0^{-u(q-1)}\nonumber
\E_b\left[\left(\Phi_b(g)-\frac{g}{\xi_b}(\hat{y}_T)\right)^{uq}\right]\\
&\le\nonumber \Delta_0^{-u(q-1)}c_3^{uq}T^{-uq/2}\left((uq)^{uq/2}+(uq)^{uq}T^{-uq/2}\right)
\\
&= \Delta_0^{-u\beta/(1-\beta)}c_3^{u/(1-\beta)}\theta(u,T).\label{bound:II}
\end{align}
Plugging the associated bounds \eqref{bound:I} and \eqref{bound:II} into \eqref{ineq:decomp} already gives \eqref{eq: upp 1} and \eqref{eq: upp 2}, since $0<\beta_1\leq\beta_2$ implies
\[\cG(\beta_2)=\cG_b(y_1,\zeta,M,\Delta_0,n,\eta,\beta_2)\subseteq\cG_b(y_1,\zeta,M,\Delta_0,n,\eta,\beta_1)=\cG(\beta_1). \]
For proving \eqref{eq: upp 3}, let $\beta_T= 1-c_{\exp}/T,$ for some constant $c_{\exp}>0$ to be specified later. 
Then, for large enough $T$, it holds $\beta_T\in(0,1)$. Hence, \eqref{eq: upp 1} gives
\begin{align*}
&\sup_{b\in\Sigma(\mathbf{C},A,\gamma)}\sup_{g\in  \cG_b(\beta)}\E_b\left[\left(\Phi_b(g)-\frac{g}{\xi_{b}}(\hat{y}_T)\right)^p\right]\\
&\quad\leq C_1^{\frac{p}{1-\beta_T}}T^{-\frac{p}{2-2\beta_T}}\left(\left(\frac{p}{1-\beta_T}\right)^{\frac{p}{2-2\beta_T}}+\left(\frac{p}{1-\beta_T}\right)^{\frac{p}{1-\beta_T}}T^{-\frac{p}{2-2\beta_T}}\right)
\\
&\quad=C_1^{\frac{pT}{c_{\exp}}}T^{-\frac{pT}{2c_{\exp}}}\left(\left(\frac{pT}{c_{\exp}}\right)^{\frac{pT}{2c_{\exp}}}+\left(\frac{pT}{c_{\exp}}\right)^{\frac{pT}{c_{\exp}}}T^{-\frac{pT}{2c_{\exp}}}\right)
\\
&\quad=\exp\left(\log\left(\frac{C_1^2p}{c_{\exp}}\right)\frac{Tp}{2c_{\exp}}\right)+\exp\left(\log\left(\frac{C_1p}{c_{\exp}}\right)\frac{Tp}{c_{\exp}}\right).
\end{align*}
Thus, choosing $c_{\exp}=2pC_1^2$ proves the assertion.
\end{proof}

Lastly, as Theorem \ref{thm: upper bound} allows us to bound all moments of the simple regret, we are able to derive non-asymptotic concentration inequalities, more specifically PAC bounds, for the performance of our estimator.

\begin{corollary}\label{cor: pac}
Let everything be given as in Theorem \ref{thm: upper bound}.
Then, for any $T>0$, $u\ge1$, it holds
\begin{equation}\begin{split}\label{eq: pac}
				\sup_{b\in\Sigma(\mathbf{C},A,\gamma)}\sup_{g\in  \cG_b(\beta)}\P_b\left(\Phi_b(g)-\frac{g}{\xi_{b}}(\hat{y}_T)
				\geq \inf_{0< \alpha< \beta\land 1} \Psi_{\alpha,T}(u)\right)\leq \exp(-u),
\end{split}\end{equation}  
where, for $\alpha\in(0,1)$, $u\ge 1$, $T>0$,
\[
\Psi_{\alpha,T}(u)\coloneqq \e C_1^{\frac{1}{1-\alpha}}T^{-\frac{1}{2-2\alpha}}\left(\left(\frac{u}{1-\alpha}\right)^{\frac{1}{2-2\alpha}}+\left(\frac{u}{1-\alpha}\right)^{\frac{1}{1-\alpha}}T^{-\frac{1}{2-2\alpha}}\right). 
\]
In particular, for any $\delta\in(0,\e^{-1}]$, $\ep\in(0,1)$, the uniform PAC bound
\[
\sup_{b\in\Sigma(\mathbf{C},A,\gamma)}\sup_{g\in  \cG_b(\beta)}\P_b\left(\Phi_b(g)-\frac{g}{\xi_{b}}(\hat{y}_T)
\geq\ep\right)\leq \delta
\]
holds, if $\beta\in(0,1)$, for any 
	\begin{equation}\label{eq: PAC1}
			T \ge \frac{4C_1^2\e^{2-2\beta}\log(\delta^{-1})}{(1-\beta)\ep^{2-2\beta}},
	\end{equation}
and, if $\beta\ge1$, for any
	\begin{equation}\label{eq: PAC2}
		T \ge 
		\frac{4C_1^2}{\log(2)}\log(2\delta^{-1})\log(\e\ep^{-1}).
	\end{equation}
	\end{corollary}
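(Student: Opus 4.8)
The plan is to read off the concentration inequality \eqref{eq: pac} from the $p$-th moment bounds of Theorem \ref{thm: upper bound} via a Markov argument with a well-chosen exponent, and then to specialise to the PAC statements \eqref{eq: PAC1}--\eqref{eq: PAC2} by substituting the prescribed lower bounds on $T$ into the threshold $\Psi_{\alpha,T}$.

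First I would abbreviate $Z\coloneqq\Phi_b(g)-\frac{g}{\xi_b}(\hat y_T)\geq0$ and fix $u\geq1$. For every $\alpha\in(0,\beta\land1)$, inequality \eqref{eq: upp 1} with $p=u$ gives $\E_b[Z^u]\leq\big(C_1^{1/(1-\alpha)}T^{-1/(2-2\alpha)}\big)^{u}\big((u/(1-\alpha))^{u/(2-2\alpha)}+(u/(1-\alpha))^{u/(1-\alpha)}T^{-u/(2-2\alpha)}\big)$. Taking $u$-th roots and using the subadditivity $(a+b)^{1/u}\leq a^{1/u}+b^{1/u}$, valid for $a,b\geq0$ and $u\geq1$ by concavity of $t\mapsto t^{1/u}$, the right-hand side is at most $\e^{-1}\Psi_{\alpha,T}(u)$. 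Since this holds for every admissible $\alpha$, one obtains $\E_b[Z^u]^{1/u}\leq\e^{-1}\inf_{0<\alpha<\beta\land1}\Psi_{\alpha,T}(u)$, uniformly over $b\in\Sigma(\mathbf C,A,\gamma)$ and $g\in\cG_b(\beta)$. Applying Markov's inequality to $Z^{u}$ at level $c\coloneqq\inf_{0<\alpha<\beta\land1}\Psi_{\alpha,T}(u)$ and using $\E_b[Z^u]\leq(\e^{-1}c)^{u}$ then yields $\P_b(Z\geq c)\leq\E_b[Z^u]\,c^{-u}\leq\e^{-u}$, which is exactly \eqref{eq: pac}; note that the infimum over $\alpha$ is taken at the level of the moment bound, before Markov, so no ordering or measurability subtlety arises.

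To deduce the explicit PAC bound, I would take $u=\log(\delta^{-1})$ (respectively $u=\log(2\delta^{-1})$ when $\beta\geq1$), which satisfies $u\geq1$ because $\delta\leq\e^{-1}$, so that $\e^{-u}\leq\delta$. In view of \eqref{eq: pac} and the inclusion $\{Z\geq\ep\}\subseteq\{Z\geq\inf_\alpha\Psi_{\alpha,T}(u)\}$ whenever $\inf_\alpha\Psi_{\alpha,T}(u)\leq\ep$, it then only remains to check $\inf_\alpha\Psi_{\alpha,T}(u)\leq\ep$ under the stated condition on $T$. For $\beta\in(0,1)$, continuity of $\alpha\mapsto\Psi_{\alpha,T}(u)$ gives $\inf_\alpha\Psi_{\alpha,T}(u)\leq\lim_{\alpha\uparrow\beta}\Psi_{\alpha,T}(u)=\Psi_{\beta,T}(u)$, and substituting $T\geq 4C_1^2\e^{2-2\beta}\log(\delta^{-1})/((1-\beta)\ep^{2-2\beta})$, a short calculation shows that the two summands of $\Psi_{\beta,T}(u)$ collapse to $4^{-1/(2-2\beta)}\ep$ and a multiple of $\ep^{2}$ respectively, both $<\ep/2$ using $C_1\geq1$, $\ep\in(0,1)$ and $1-\beta<1$; this gives \eqref{eq: PAC1}. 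For $\beta\geq1$ I would instead choose the free parameter $\alpha=1-c/T$ with $c\asymp C_1^2 u$, for which $\Psi_{\alpha,T}(u)$ simplifies to $\e\,(C_1^2u/c)^{T/(2c)}+\e\,(C_1u/c)^{T/c}$, a sum of two geometrically $T$-decaying terms that the lower bound on $T$ in \eqref{eq: PAC2} forces below $\ep$. (Alternatively, since $\cG_b(\beta)\subseteq\cG_b(\beta_T)$ for $\beta_T=1-c/T\in(0,1)$, one could reduce to the case $\beta\in(0,1)$ already handled, mirroring the proof of \eqref{eq: upp 3}.)

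The Markov/subadditivity mechanism is routine; I expect the only genuine work to lie in the last step, namely carefully matching exponents and multiplicative constants when the explicit threshold for $T$ is plugged into $\Psi_{\alpha,T}(u)$, and, in the exponential regime $\beta\geq1$, re-optimising $\alpha$ so that the resulting requirement on $T$ exhibits the claimed $\log(\delta^{-1})\log(\ep^{-1})$ product structure. This is bookkeeping rather than a conceptual obstacle.
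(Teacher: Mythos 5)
Your treatment of \eqref{eq: pac} (Markov's inequality applied to the $u$-th moment bound \eqref{eq: upp 1} with $p=u$, the $u$-th root handled by subadditivity, the infimum over $\alpha$ taken before Markov) and of \eqref{eq: PAC1} (choose $u=\log(\delta^{-1})$, pass to $\Psi_{\beta,T}$ by continuity, and check that the two summands are at most $4^{-1/(2-2\beta)}\ep$ and $\ep^2/(4\e)$) is correct and is exactly the paper's argument.

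The case $\beta\ge1$ is where your proposal has a genuine gap, and it is not the ``bookkeeping'' you anticipate: the route through $\Psi_{\alpha,T}$ cannot reach the constant $4C_1^2/\log 2$ in \eqref{eq: PAC2}. Writing $Z=\Phi_b(g)-\frac{g}{\xi_b}(\hat y_T)$ and taking $\alpha=1-c/T$ with $c=\lambda uC_1^2$, your two terms are $\e\lambda^{-T/(2c)}$ and $\e(\lambda C_1)^{-T/c}$; at the claimed threshold $T=\frac{4C_1^2}{\log 2}\log(2\delta^{-1})\log(\e\ep^{-1})$, with $C_1=1$, $u=\log(2\delta^{-1})$ and $\ep$ close to $1$ (so $\log(\e\ep^{-1})\approx 1$), the requirement becomes $\e^{1-\kappa}+\e^{1-2\kappa}\le 1$ with $\kappa=2\log\lambda/(\lambda\log 2)\le 2/(\e\log 2)\approx 1.06$, and the left-hand side is at least $\approx 1.26$ for every $\lambda$; so ``forces below $\ep$'' fails at the stated $T$. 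The same obstruction defeats your alternative reduction to \eqref{eq: PAC1} with margin $\beta_T=1-c/T$: it requires $c\ge 4C_1^2\log(\delta^{-1})(\e/\ep)^{2c/T}$, which has no admissible $c$ at the stated $T$ once $\delta$ is small. The loss is built into $\Psi$ itself, since it was produced by the wasteful split $(a+b)^{1/u}\le a^{1/u}+b^{1/u}$ before Markov. The paper's proof of \eqref{eq: PAC2} therefore bypasses $\Psi$: arguing as in the proof of \eqref{eq: upp 3} with $c_{\exp}=2uC_1^2$ and $u=\log(\delta^{-1})$ gives, for $T\ge 2\log(\delta^{-1})C_1^2$, the $u$-independent bound $\E_b[Z^u]\le 2\exp(-\tfrac{\log 2}{4C_1^2}T)$, and Markov's inequality at the $T$-dependent level $\exp\big(1+\tfrac{\log 2}{\log(\delta^{-1})}\big(1-\tfrac{T}{4C_1^2}\big)\big)$ yields probability at most $\delta$; this level is $\le\ep$ precisely under \eqref{eq: PAC2}. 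Your route does prove a PAC bound of the same form with a larger absolute constant, but to obtain the stated threshold you must take the $u$-th root of the two-term moment bound as a whole (equivalently, tune the Markov level to $T$ as the paper does) rather than optimise inside $\Psi_{\alpha,T}$.
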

	\begin{proof}
Inequality \eqref{eq: pac} directly follows from Theorem \ref{thm: upper bound} together with Markov's inequality. 
Now, for $\beta\in(0,1)$, \eqref{eq: pac} implies that for any $\delta\in(0,\e^{-1}]$, $T>0$,
\begin{equation*}
\sup_{b\in\Sigma(\mathbf{C},A,\gamma)}\sup_{g\in  \cG_b(\beta)}\P_b\left(\Phi_b(g)-\frac{g}{\xi_{b}}(\hat{y}_T)
\geq \Psi_{\beta,T}(\log(\delta^{-1}))\right)\leq \delta,
\end{equation*}
and straightforward computations show that, for $\ep\in(0,1)$, it holds $\Psi_{\beta,T}(\log(\delta^{-1}))\leq \ep$ for any $T$ fulfilling \eqref{eq: PAC1}, concluding the proof of the second assertion.
Lastly, let again $\delta\in(0,\e^{-1}]$ be given. 
Then, arguing as in the proof of \eqref{eq: upp 3} and applying Markov's inequality, one obtains for $T\ge 2\log(\delta^{-1})C_1^2$ that
\[
\sup_{b\in\Sigma(\mathbf{C},A,\gamma)}\sup_{g\in  \cG_b(\beta)}\P_b\left(\Phi_b(g)-\frac{g}{\xi_{b}}(\hat{y}_T)		\geq  \exp\left(1+\frac{\log(2)}{\log(\delta^{-1})}\left(1-\frac{T}{4C_1^2}\right)\right)\right)\leq \delta. 
\]
Noting that, for any $\ep\in(0,1)$,
\[T\geq \frac{4C_1^2}{\log(2)}\log(2\delta^{-1})\log(\e\ep^{-1})\quad\implies \quad \exp\left(1+\frac{\log(2)}{\log(\delta^{-1})}\left(1-\frac{T}{4C_1^2}\right)\right)\leq \ep \]
concludes the proof.
\end{proof}

Let us briefly comment on how the above results relate to the literature on bandit problems.

\begin{remark}
\begin{enumerate}[(a)]
\item 
The uniform PAC bounds stated in Corollary \ref{cor: pac} can again be embedded in the context of pure exploration in bandit problems. In this setting, one is often interested in studying triplets $(\pi,\tau,\psi)$ consisting of a strategy $\pi$, a stopping time $\tau$, and a recommended action $\psi$, i.e., the agent follows the strategy $\pi$ until the stopping time $\tau$, after which they recommend the arm $\psi$ as the best arm. 
 A triplet of this form is then called \emph{sound} at level $\delta>0$ if the probability of $\tau$ being finite and the arm $\psi$ being suboptimal is less than $\delta$ (for a rigorous definition, see Definition 33.4 in \cite{latti20}). This definition can be analogously interpreted as requiring that the probability of $\tau$ being finite and the simple regret being strictly positive after following the strategy for $\tau$ turns is less than $\delta$. Since there is no hope of identifying an optimal barrier exactly in our problem, due to its non-discrete character, a natural modification of this definition is to require that the simple regret is greater than $\ep>0$ only with probability less than $\delta>0$ after following some estimation procedure up to a stopping time $\tau$. 
 Corollary \ref{cor: pac} then implies that, following the proposed estimation procedure up to the time $T$ specified in \eqref{eq: PAC1}, respectively \eqref{eq: PAC2}, is indeed an $\ep$-sound strategy at level $\delta$. 
 \item
 The stated PAC bounds (respectively bounds on the sample complexity) should be compared with the lower bound on the sample complexity obtained in the pure exploration setting for Bernoulli bandits in \cite{mann04}, in particular regarding its dependence on $\ep$. We see that, for $\beta\in(0,1)$, our upper bound improves the lower bound in \cite{mann04} by a factor of $\ep^{2\beta}$. For $\beta\ge 1$, even an only logarithmic increase in $\ep^{-1}$ is possible instead of $\ep^{-2}$.
 \end{enumerate}
 \end{remark}
 
 Since our results on the simple regret in the general case investigated in \cite{chris23} also hold for all moments (see Proposition \ref{prop: est mom}), we are also able to derive new uniform PAC bounds, or bounds on the sample complexity, which interestingly match the lower bounds presented in \cite{mann04}.
 
 \begin{corollary}\label{cor: pac gen}
    For any $T>0$, $u\ge 1$, it holds
				\[\sup_{b\in\Sigma(\mathbf{C},A,\gamma)}\sup_{g\in \cG(y_1,\zeta,M,\mathbf{C},A,\gamma)}\P_b\left(\Phi_b(g)-\frac{g}{\xi_b}(\hat{y}_T)\geq \e c_3 T^{-1/2}\left( u^{1/2}+ u T^{-1/2} \right)\right)\leq \e^{-u}, \]
    where $c_3>0$ is the same constant as in Proposition \ref{prop: est mom}. In particular, for any $\delta\in(0,\e^{-1}]$, $\ep\in(0,1)$, the uniform PAC bound 
 \[\sup_{b\in\Sigma(\mathbf{C},A,\gamma)}\sup_{g\in \cG(y_1,\zeta,M,\mathbf{C},A,\gamma)}\P_b\left(\Phi_b(g)-\frac{g}{\xi_b}(\hat{y}_T)\geq \ep\right)\leq \delta \]
    holds for any 
    \[T\geq \frac{4\e^2c_3^2}{\ep^2}\log(\delta^{-1}) .\]
 \end{corollary}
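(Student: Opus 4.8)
The plan is to derive the tail bound directly from the $p$-th moment bounds in Proposition \ref{prop: est mom}(b) via a Chernoff-type optimisation over $p$, and then to specialise to obtain the sample-complexity statement. First I would fix $b\in\Sigma(\mathbf C,A,\gamma)$ and $g\in\cG(y_1,\zeta,M,\mathbf C,A,\gamma)$, write $R_T\coloneqq \Phi_b(g)-\frac{g}{\xi_b}(\hat y_T)\ge 0$, and apply Markov's inequality in the form $\P_b(R_T\ge t)\le t^{-p}\,\E_b[R_T^p]$ for any $p\ge 1$. Substituting the bound $\E_b[R_T^p]\le c_3^p T^{-p/2}(p^{p/2}+p^p T^{-p/2})$, one gets $\P_b(R_T\ge t)\le \big(c_3 T^{-1/2}(p^{1/2}+pT^{-1/2})/t\big)^p$, using $(a+b)^{1/p}\le a^{1/p}+b^{1/p}$ is not even needed — we simply split $p^{p/2}+p^p T^{-p/2}\le (p^{1/2}+pT^{-1/2})^p$ after noting $(x+y)^p\ge x^p+y^p$ for $x,y\ge 0$, $p\ge 1$. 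Choosing $t=\e\,c_3 T^{-1/2}(p^{1/2}+pT^{-1/2})$ makes the right-hand side equal to $\e^{-p}$; setting $p=u$ and taking suprema over $b$ and $g$ (the bound in Proposition \ref{prop: est mom}(b) being uniform) yields the first displayed inequality with $u^{1/2}$ and $uT^{-1/2}$ in place of $p^{p/2}$, $p^pT^{-p/2}$.

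For the second assertion I would take $\delta\in(0,\e^{-1}]$, set $u=\log(\delta^{-1})\ge 1$, so that the first part gives
\[
\sup_{b}\sup_{g}\P_b\Big(R_T\ge \e c_3 T^{-1/2}\big(\log(\delta^{-1})^{1/2}+\log(\delta^{-1})T^{-1/2}\big)\Big)\le\delta.
\]
It then remains to check that $\e c_3 T^{-1/2}\big(\log(\delta^{-1})^{1/2}+\log(\delta^{-1})T^{-1/2}\big)\le\ep$ whenever $T\ge 4\e^2 c_3^2\ep^{-2}\log(\delta^{-1})$. This is a short elementary computation: with $T\ge 4\e^2 c_3^2\ep^{-2}\log(\delta^{-1})$ one has $\e c_3 T^{-1/2}\log(\delta^{-1})^{1/2}\le \ep/2$ directly; for the second term, since $\log(\delta^{-1})\ge 1$ the same lower bound on $T$ gives $T\ge 4\e^2 c_3^2\ep^{-2}$, hence $\e c_3 T^{-1/2}\le\ep/2$, and combining with $\log(\delta^{-1})T^{-1/2}\le\log(\delta^{-1})^{1/2}$ (valid as soon as $T\ge\log(\delta^{-1})$, which is implied) yields that the second term is also at most $\ep/2$. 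Adding the two halves gives the claim.

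The only mild subtlety — and the step I would be most careful about — is the passage from the two-term moment bound $p^{p/2}+p^pT^{-p/2}$ to the clean factor $(u^{1/2}+uT^{-1/2})$ after the Chernoff optimisation: one must use $x^p+y^p\le(x+y)^p$ for $p\ge1$, $x,y\ge0$ (superadditivity of $t\mapsto t^p$), applied with $x=p^{1/2}$, $y=pT^{-1/2}$, so that $T^{-p/2}(p^{p/2}+p^pT^{-p/2})\le T^{-p/2}(p^{1/2}+pT^{-1/2})^p$ and the $p$-th root factors exactly. Everything else is a direct invocation of Proposition \ref{prop: est mom}(b), Markov's inequality, and arithmetic; there is no real obstacle beyond bookkeeping, so I would keep the proof to a few lines, exactly as the pattern of Corollary \ref{cor: pac} suggests.
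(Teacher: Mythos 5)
Your proof is correct and is essentially the paper's (omitted) argument: Markov's inequality applied with $p=u$ to the uniform $p$-th moment bound of Proposition \ref{prop: est mom}(b), using $p^{p/2}+p^pT^{-p/2}\le(p^{1/2}+pT^{-1/2})^p$, exactly parallel to the proof of Corollary \ref{cor: pac}, followed by the substitution $u=\log(\delta^{-1})$. The only pedantic point is that your claim ``$T\ge\log(\delta^{-1})$ is implied'' uses $4\e^2c_3^2\ge1$, which one should secure by enlarging $c_3$ if necessary (always possible, since Proposition \ref{prop: est mom} is an upper bound); with that convention the arithmetic closes as you state.
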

 We omit the proof as it is completely analogous to the proof of Corollary \ref{cor: pac}.

\section{Lower bounds for the simple regret}\label{sec: lower bound}
In order to show that the rates of convergence for the simple regret established under the margin assumption (see Section \ref{sec:upsimp}) and in the general case (cf.~Section 4 in \cite{chris23}, resp.\ Proposition \ref{prop: est mom}), respectively, are optimal, we provide minimax lower bounds over the investigated classes of drift and payoff functions. 
As already noted in Section \ref{sec: intro}, our analysis is inspired by the literature on minimax lower bounds for bandit problems with finitely many arms. The basic idea there is to construct two bandit problems such that the Kullback--Leibler divergence of the distributions of the arms is constant wrt the sample size, while the optimal arm is different in the respective bandit problems. 
Applying the Bretagnolle--Huber inequality, which states that for two probability measures $\P,\mathbb{Q}$ on the same measurable space $(\Omega,\mathscr{F})$, it holds
\begin{equation}\label{bh ineq}
\forall A\in\mathscr{F}\colon\quad\P(A)+\mathbb{Q}(A^\mathsf{c})\geq \frac 1 2 \exp(-\KL(\P,\mathbb{Q}))
\end{equation}
(see, e.g., Theorem 14.2 in \cite{latti20} or Lemma 2.6 in \cite{tsy09}) then provides the lower bound. 
In our setting, we thus need to specify $b,\bb\in\Sigma$ and $g$ in such a way that, for suitable $\delta,\ep>0$,  
\begin{equation}\label{eq: idea lower bound}
\left\{ \Phi_{\bb}(g)-\frac{g(y)}{\xi_{\bb}(y)}\leq\delta \right\}\subseteq\left\{\Phi_b(g)-\frac{g(y)}{\xi_{b}(y)}>\ep \right\}.
\end{equation}
The set on the LHS of \eqref{eq: idea lower bound} can then be interpreted as the optimal arm under the drift $\bb$, and \eqref{eq: idea lower bound} indicates that, when this arm is pulled, the suboptimal arm under the drift $b$ is played.
		
\subsection{Lower bound under the margin condition}
To verify the minimax optimality of the convergence rate under the margin condition established in \eqref{eq:cor}, we need to find two drift functions $b,\bb\in\Sigma(\mathbf C,A,\gamma)$ and a single payoff function $g$ satisfying $g\in\cG_b,\cG_{\bb}$, which is far from being straightforward. 
Indeed, applying a Taylor expansion shows that the margin condition is deeply related to the multiplicity of the root of the derivative of $g/\xi_b$, which is known to be highly unstable even under minor perturbations (see, e.g., \cite{dayton11}). 
However, as our upper bound holds in expectation, a straightforward application of Markov's inequality also yields stochastic boundedness for all payoff functions $g$ in the vicinity of the original function $\bar{g}$, with the same rate as in Theorem \ref{thm: upper bound}. 
More precisely, defining for each $b\in\Sigma(\textbf{C},A,\gamma)$, $\bar{g}\in\cG_b(y_1,\zeta,M,\textbf{C},\Delta_0,n,\eta,\beta)$ and $\kappa,T>0$ the class of payoff functions
\begin{equation}\begin{split}\label{def:setGbar}
\cG_{\bar{g}}(\kappa,T)&\coloneq\bigg\{g\in\cG\colon\big\{\Phi_b(g)-\frac{g}{\xi_b}(y)>\kappa c T^{-1/2(1-\beta)}\big\}\\
&\hspace*{6em} \subseteq\big\{\Phi_b(\bar{g})-\frac{\bar{g}}{\xi_b}(y)>c T^{-1/2(1-\beta)}\big\}  \ \forall c>1 \bigg\},
\end{split}\end{equation}
the following result holds.
		
\begin{corollary}\label{corr: op rate}
Let everything be given as in Theorem \ref{thm: upper bound}. 
Then, for any $\ep>0$, there exists a constant $c_\ep>0$ such that, for any $T\geq1$,
\begin{equation}\label{eq:cor} \sup_{b\in\Sigma(\mathbf{C},A,\gamma)}\sup_{\bar{g}\in\cG_b(y_1,\zeta,M,\Delta_0,n,\eta,\beta)}\sup_{g\in\cG_{\bar{g}}(\kappa,T)}\P_b\left(\Phi_b(g)-\frac{g}{\xi_{b}}(\hat{y}_T)>c_\ep T^{-1/(2-2\beta)}\right)\leq \ep.
\end{equation}
\end{corollary}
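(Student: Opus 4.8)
The plan is to deduce the in-probability bound for $g$ from the in-expectation bound for $\bar g$ supplied by Theorem~\ref{thm: upper bound}, via Markov's inequality, and then to transport it to $g$ along the sublevel-set comparison encoded in the definition \eqref{def:setGbar} of $\cG_{\bar g}(\kappa,T)$. Here $\hat y_T$ is the estimator \eqref{def:yhat} built from $\bar g$, which is the function playing the role of ``$g$'' in Theorem~\ref{thm: upper bound} since $\bar g\in\cG_b(\beta)$. For $\beta\in(0,1)$, \eqref{eq: upp 2}---upgraded to hold for all $T\ge 1$ by combining it with the crude deterministic bound $0\le\Phi_b(\bar g)-\tfrac{\bar g}{\xi_b}(\hat y_T)\le 2M/M_1$, valid for every $b,\bar g$---furnishes a constant $C=C(\beta,y_1,\zeta,M,\Delta_0,n,\eta,\mathbf C,A,\gamma)\ge 1$ with
\[
\sup_{b\in\Sigma(\mathbf C,A,\gamma)}\ \sup_{\bar g\in\cG_b(\beta)}\E_b\!\left[\Phi_b(\bar g)-\frac{\bar g}{\xi_b}(\hat y_T)\right]\le C\,T^{-\frac{1}{2-2\beta}},\qquad T\ge 1 .
\]

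Since the random variable in this expectation is nonnegative (because $\hat y_T\in[y_1,\zeta]$ and $\bar g\in\cG$ forces $\Phi_b(\bar g)=\sup_{z\in(0,\zeta]}\tfrac{\bar g}{\xi_b}(z)\ge\tfrac{\bar g}{\xi_b}(\hat y_T)$), Markov's inequality gives, for every $c>0$,
\[
\sup_{b\in\Sigma(\mathbf C,A,\gamma)}\ \sup_{\bar g\in\cG_b(\beta)}\P_b\!\left(\Phi_b(\bar g)-\frac{\bar g}{\xi_b}(\hat y_T)>c\,T^{-\frac{1}{2-2\beta}}\right)\le\frac{C}{c}.
\]
Now fix $b$, $\bar g\in\cG_b(\beta)$, $g\in\cG_{\bar g}(\kappa,T)$ and any $c>1$. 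The inclusion in \eqref{def:setGbar} is an inclusion of subsets of $(0,\infty)$, hence remains valid when both sides are specialised to the data-dependent threshold $\hat y_T$ (which takes values in $[y_1,\zeta]\subseteq(0,\infty)$); recalling $1/\big(2(1-\beta)\big)=1/(2-2\beta)$, this yields
\[
\left\{\Phi_b(g)-\frac{g}{\xi_b}(\hat y_T)>\kappa c\,T^{-\frac{1}{2-2\beta}}\right\}\ \subseteq\ \left\{\Phi_b(\bar g)-\frac{\bar g}{\xi_b}(\hat y_T)>c\,T^{-\frac{1}{2-2\beta}}\right\},
\]
so the probability of the left-hand event is at most $C/c$, uniformly over $b$, $\bar g$ and $g\in\cG_{\bar g}(\kappa,T)$. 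Choosing $c=c(\ep)$ so large that $C/c\le\ep$ (for instance $c=\max\{2,C/\ep\}$, which also guarantees $c>1$) and putting $c_\ep\coloneqq\kappa c$ yields \eqref{eq:cor}.

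Once Theorem~\ref{thm: upper bound} is available this is essentially a bookkeeping argument, and I do not anticipate a genuine difficulty; the single point that must be respected---and the reason the class $\cG_{\bar g}(\kappa,T)$ is defined the way it is---is that the transfer from $g$ to $\bar g$ has to be phrased in terms of the \emph{sublevel sets} of the two regret maps $y\mapsto\Phi_b(g)-\tfrac{g}{\xi_b}(y)$ and $y\mapsto\Phi_b(\bar g)-\tfrac{\bar g}{\xi_b}(y)$, carrying the \emph{same} normalisation $T^{-1/(2-2\beta)}$, so that the comparison survives substitution of the random $\hat y_T$; comparing only the maximisers, or comparing the two functions pointwise, would not be enough. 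I would also note that \eqref{eq:cor} is non-vacuous precisely because $\cG_{\bar g}(\kappa,T)$ captures a true vicinity of $\bar g$ (in particular it contains $\bar g$ itself whenever $\kappa\ge 1$), whereas within the margin class $\cG_b(\beta)$ itself $\bar g$ generally admits no perturbation at all; for $\beta\ge 1$ the governing phenomenon is instead the exponential decay \eqref{eq: upp 3}, so the formulation in terms of $T^{-1/(2-2\beta)}$ is confined to $\beta\in(0,1)$.
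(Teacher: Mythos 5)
Your argument is exactly the paper's proof: the paper disposes of this corollary in one line, citing precisely the three ingredients you use — the superlevel-set inclusion in the definition \eqref{def:setGbar}, the moment bound of Theorem \ref{thm: upper bound} applied to $\bar g\in\cG_b(\beta)$, and Markov's inequality — and your write-up simply makes the bookkeeping (including the reading of $\hat y_T$ as the estimator attached to $\bar g$, which is what makes the transfer legitimate) explicit. No discrepancies to report.
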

\begin{proof}
The claim immediately follows from the definition \eqref{def:setGbar} of $\cG_{\bar{g}}$, together with Theorem \ref{thm: upper bound} and Markov's inequality.
\end{proof}
Hence, it is sufficient to show that the payoff function used lies in the vicinity of a suitable payoff function, instead of directly verifying the margin condition.
Applying this technique then leads to the following result, which shows the existence of appropriate drift and payoff functions that satisfy \eqref{eq: idea lower bound} under some minor technical assumptions on the parameters of the classes under consideration.
Given $b\in\Sigma(\mathbf C,A,\gamma)$, we denote by $\Pro^T_b$ the law of the process solving \eqref{eq: sde} induced on the canonical space $\big(C([0,T]; \mathbb R), \mathscr B_{C([0,T];\mathbb R)}\big)$.
Furthermore, define the constant $M_2>0$ such that $\xi_b(x)\leq M_2$ for all $x\in[y_1,\zeta],b\in\Sigma(\mathbf{C},A,\gamma)$.
Recall the definition of the classes $\mathcal G_{b}(\cdot)$ and $\mathcal G_{\bar g}(\kappa,T)$ of payoff functions introduced in \eqref{def:setG} and \eqref{def:setGbar}, respectively.
		
\begin{proposition}\label{prop: existence hyp}
Let $M,\gamma>0$, $\mathbf{C}\geq 1$, $\beta\in(0,1)$, $y^*>M^\beta$, $\Delta_0\in(0,1)$, $n\in\N$, $\eta\ge2$, and assume that the parameters $A,y_1,\zeta,\kappa$ are such that  $A>y^*+M^\beta+\gamma$, $y_1\leq  y^*-M^\beta$, $\zeta\geq y^*+M^\beta$, and
\[\kappa\ge
\left(1+(\beta M/\sqrt{\pi})^{\beta/(1-\beta)}\right) 
\left(2\left(\frac{M}{\sqrt\pi}\lor\frac1\beta\right)+\frac{M}{\sqrt{\pi}}\right).\]
Then, for large enough $T$, there exist $b,\bb\in\Sigma(\mathbf{C},A,\gamma)$ and a payoff function $g$ such that the following statements hold:
\begin{enumerate}
\item[$\operatorname{(a)}$]
There exists some constant $c_{L,1}>0$ such that the associated Kullback--Leibler divergence satisfies
\begin{equation*}
\KL(\Pro^T_{b},\Pro^T_{\bb})\leq c_{L,1};
\end{equation*}
\item[$\operatorname{(b)}$]
there exist constants $c_{L,2},c_{L,3}>0$ such that
\[
\left\{ \Phi_{\bb}(g)-\frac{g(y)}{\xi_{\bb}(y)}\leq c_{L,2} T^{-1/(2-2\beta)}\right\}\subseteq\left\{\Phi_b(g)-\frac{g(y)}{\xi_{b}(y)}>c_{L,3} T^{-1/(2-2\beta)} \right\}; 
\]
\item[$\operatorname{(c)}$]
$g\in \cG_{\bb}(y_1,\zeta,MM_2,\Delta_0,n,\eta,\beta)$ and, for a function $\bar{g}$ such that $\bar{g}\in \cG_b(y_1,\zeta,MM_2,\Delta_0,n,\eta,\beta)$, $g\in\cG_{\bar{g}}(\kappa,T)$.
\end{enumerate}
\end{proposition}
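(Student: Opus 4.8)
The plan is to construct the two drift functions $b, \bb$ as explicit small perturbations of a simple linear (Ornstein--Uhlenbeck-type) drift, and to define the payoff function $g$ so that the associated payoff-per-time function $g/\xi_b$ has, up to scaling by $\xi_b$, the shape $1 - |1 - x/y^*|^{1/\beta}$ suggested by Figure \ref{fig: payoff}, with a single maximiser at $y^*$. First I would fix a base drift $b_0(x) = -\gamma_0 \operatorname{sgn}(x)(|x|\wedge A)$ or simply $b_0(x) = -x/2$ truncated appropriately, and take $b = b_0$ on all of $\R$, while defining $\bb$ to agree with $b$ outside a small window around $y^*$ and to differ from it by a smooth bump of height $O(T^{-1/(2-2\beta)})$ and width of order one inside that window. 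Since $\xi_b$ depends on $b$ only through the invariant density $\rho_b$ and distribution function $F_b$ via \eqref{eq:xi}, a perturbation of $b$ of size $\delta_T := T^{-1/(2-2\beta)}$ supported on a bounded set produces a perturbation of $\xi_b$ of the same order, uniformly on $[y_1,\zeta]$; this is the mechanism that shifts the maximiser of $g/\xi_{\bb}$ away from $y^*$ by an amount comparable to $\delta_T$, which after accounting for the margin exponent $\beta$ separates the two arms at the scale $\delta_T^{1/(2-2\beta)\cdot(\dots)}$ — more precisely, I would tune the bump so that the maximisers of $g/\xi_b$ and $g/\xi_{\bb}$ are a fixed multiple of $\delta_T$ apart, which by the margin-type behaviour at the two maxima yields the inclusion in part (b) with the stated rate.

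For part (a), the Kullback--Leibler divergence between the path laws $\Pro^T_b$ and $\Pro^T_{\bb}$ on $C([0,T];\R)$ is, by Girsanov's theorem, $\tfrac12 \E_b^0\big[\int_0^T (b(X_s) - \bb(X_s))^2\,\d s\big]$. Since $b - \bb$ is supported on a bounded window and bounded by $\delta_T$ in absolute value, and since $\X$ under $\P_b$ spends on average a fraction of time of order one in that window (using ergodicity and the lower bound $\rho_b \geq a$ on $[0,\zeta]$), this expectation is bounded by $\tfrac12 \delta_T^2 \cdot C T = \tfrac12 C\, T^{1 - 1/(1-\beta)} = \tfrac12 C\, T^{-\beta/(1-\beta)} \to 0$; so $c_{L,1}$ can be taken as any fixed positive constant (indeed the divergence vanishes), which is even stronger than required. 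This is the step where the precise choice of the exponent $1/(2-2\beta)$ in $\delta_T$ is forced: it is exactly the scaling that keeps the KL divergence bounded (here, bounded by a vanishing quantity) while the regret gap stays of order $\delta_T^{?}$ — one has to check the bookkeeping carefully, but the two requirements pin down the rate.

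Part (c) is the technically heaviest: I must verify that $g$ lies in the base class $\cG(y_1,\zeta,MM_2,\mathbf C,A,\gamma)$ (sign change at $y_1$, the maximiser lying in $[y_1,\zeta]$, and the sup-norm bound on $[y_1,\zeta]$ — all arranged by the explicit choice $y_1 \leq y^* - M^\beta$, $\zeta \geq y^* + M^\beta$, and the normalisation $|g| \leq MM_2$), that $g/\xi_{\bb} \in \mathcal T(\Delta_0,n,\eta,\beta)$ so that $g \in \cG_{\bb}$, and that there is a companion $\bar g$ with $\bar g/\xi_b \in \mathcal T$ and $g \in \cG_{\bar g}(\kappa,T)$. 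For the margin membership I would compute $\cX_{g/\xi_{\bb}}(\Delta)$ directly from the $1 - |1-x/y^*|^{1/\beta}$ shape: the superlevel set $\{f \geq \sup f - \Delta\}$ is an interval of half-width $\asymp \Delta^\beta$, which is precisely condition \eqref{def:margin} with $n = 1$ and a suitable $\eta$ — this is where the hypotheses $\eta \geq 2$, $y^* > M^\beta$, and the lower bound on $\kappa$ enter, to absorb the constants coming from the factor $\xi_{\bb}$ (which is bounded between $M_1$ and $M_2$ and Lipschitz on $[y_1,\zeta]$, so multiplying by it only distorts the level sets by bounded factors). For the $\cG_{\bar g}(\kappa,T)$ membership I would take $\bar g := g \cdot \xi_b/\xi_{\bb}$ (so that $\bar g/\xi_b = g/\xi_{\bb}$ exactly has the clean margin shape relative to $b$), check $\bar g \in \cG_b$, and then verify the inclusion of superlevel sets defining $\cG_{\bar g}(\kappa,T)$ using that $g$ and $\bar g$ differ only by the $O(\delta_T)$ perturbation of $\xi$, so their regret functions differ by $O(\delta_T) = O(T^{-1/(2-2\beta)})$ pointwise, which is exactly the slack built into definition \eqref{def:setGbar}; the constant $\kappa$ must be at least the stated expression so that the $\kappa c\,T^{-1/(2-2\beta)}$ threshold dominates this discrepancy for all $c > 1$. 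The main obstacle I anticipate is part (b): one has to guarantee not merely that the maximisers differ, but that the \emph{entire} near-optimal set for $\bb$ is disjoint from — indeed pushed strictly into the complement of the $\ep$-suboptimal set for $b$, at the sharp rate; this requires showing that the perturbation of $\xi$ genuinely moves the argmax (a quantitative implicit-function-type argument), rather than merely perturbing it within the flat region that the margin condition permits, and it is precisely here that one leans on Corollary \ref{corr: op rate} to replace the delicate direct verification of the margin condition for $g$ under $b$ by the softer ``vicinity'' condition $g \in \cG_{\bar g}(\kappa,T)$.
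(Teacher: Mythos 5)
There is a genuine gap, and it sits at the quantitative heart of the proposition: the calibration of the perturbation size against the Kullback--Leibler budget. You take the drift perturbation itself to be of height $\delta_T=T^{-1/(2-2\beta)}$, and you correctly observe that this makes $\KL(\Pro_b^T,\Pro_{\bb}^T)\lesssim \delta_T^2T=T^{-\beta/(1-\beta)}\to0$; but a drift perturbation of sup-norm size $\delta_T$ only tilts $\xi_b$, and hence $g/\xi_b$, by $O(\delta_T)$, and with the local shape $f(y)=M-\vert y-y^*\vert^{1/\beta}$ the stationarity equation $f'(y)\asymp\delta_T$ moves the maximiser by $\asymp\delta_T^{\beta/(1-\beta)}$ and separates the two near-optimal sets only at the level $\asymp\delta_T^{1/(1-\beta)}=T^{-1/(2(1-\beta)^2)}$, which is of strictly \emph{smaller} order than the claimed $T^{-1/(2-2\beta)}$ for every $\beta\in(0,1)$. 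So part (b) fails at the stated rate under your scaling. The fact that your KL bound \emph{vanishes} (which you present as ``even stronger than required'') is precisely the symptom of the miscalibration: in a two-point lower-bound construction the perturbation should exhaust the information budget, i.e.\ be of order $T^{-1/2}$ so that $\ep^2T=O(1)$ and the KL stays bounded away from zero and infinity. This is what the paper does: it sets $\ep=T^{-1/2}$, perturbs the drift only on the negative half-line (so that $\xi_b(x)=x^2+\sqrt{\pi}x$ and $\xi_{\bb}(x)=x^2+(1+\ep)\sqrt{\pi}x$ explicitly on $(0,A_0]$), takes $g=(M-\vert y-y^*\vert^{1/\beta})\xi_{\bb}$, and then the margin geometry converts the $O(\ep)$ tilt into a maximiser shift of order $\ep^{\beta/(1-\beta)}$ and a regret separation of order $\ep^{1/(1-\beta)}=T^{-1/(2-2\beta)}$, verified by an explicit bound on $h'$ near $y^*$. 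Your own exponent bookkeeping is literally left unfinished (``$\delta_T^{1/(2-2\beta)\cdot(\dots)}$''), and also conflates the distance between maximisers with the regret gap, which differ by the margin exponent.

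A secondary consequence of the same miscalibration affects your verification of (c): with the correct perturbation $\ep=T^{-1/2}$, the pointwise difference between the regret functions of $g$ and $\bar g=f\xi_b$ is $O(T^{-1/2})$, which is \emph{larger} than the threshold scale $T^{-1/(2-2\beta)}$ appearing in \eqref{def:setGbar}, so the global additive argument you sketch (``their regret functions differ by $O(\delta_T)$ pointwise, which is exactly the slack built into the definition'') does not close. One needs, as in the paper, a local argument: on the set where $\Phi_b(\bar g)-\bar g/\xi_b\le c\,\ep^{1/(1-\beta)}$ (an interval of radius $\asymp c^\beta\ep^{\beta/(1-\beta)}$ around $y^*$), bound $-h'$ there and integrate, which yields $\Phi_b(g)-g/\xi_b\le c_6c\,\ep^{1/(1-\beta)}$ with $c_6$ exactly the expression bounding $\kappa$ from below. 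Your choices of $g$, of $\bar g=g\xi_b/\xi_{\bb}=f\xi_b$, and of the margin verification for $g/\xi_{\bb}$ are in the spirit of the paper, but without repairing the scaling the argument does not establish (a)--(c) simultaneously.
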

		
The proof of the above Proposition is given in Appendix \ref{app:A}.	
Applying it together with the Bretagnolle--Huber inequality \eqref{bh ineq} immediately yields the required lower bounds.
Since we aim at presenting them in a general version, we recall the classical notion of a loss function (see equation (2.3) in \cite{tsy09}), referring to a function $\ell\colon [0,\infty)\to[0,\infty)$ that is monotonically increasing and satisfies $\ell(0)=0$ and $\ell \nequiv 0$.
		
\begin{theorem}\label{thm:lowmarg}
Let everything be given as in Proposition \ref{prop: existence hyp}. 
Then, for any loss function $\ell$, there exist constants $c_{\ell,1},c_{\ell,2}>0$ such that, for large enough $T$, 
\begin{align*}
\inf_{\tilde{y}_T} \sup_{b\in\Sigma(\mathbf{C},A,\gamma)}\sup_{\bar{g}\in  \cG_b(y_1,\zeta,M,\Delta_0,n,\eta,\beta)}\sup_{g\in\cG_{\bar{g}}(\kappa,T)}\E_b\left[\ell\left(c_{\ell,1}T^{1/(2-2\beta)} \left(\Phi_b(g)-\frac{g(\tilde{y}_T)}{\xi_{b}(\tilde{y}_T)}\right)\right)\right]\geq c_{\ell,2},
\end{align*}
where the infimum extends over all estimators $\tilde{y}_T$.
\end{theorem}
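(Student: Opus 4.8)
The plan is to reduce the claim to the classical two-hypothesis (Le Cam) lower bound, with Proposition~\ref{prop: existence hyp} supplying the only nontrivial ingredient. First I would fix the two drifts $b,\bb\in\Sigma(\mathbf C,A,\gamma)$ and the payoff $g$ provided by Proposition~\ref{prop: existence hyp}, and note that, by part~(c), both index triples $(\bb,g,g)$ and $(b,\bar g,g)$ (with $\bar g$ the function from part~(c)) lie in the domain of the triple supremum: for the drift $\bb$ one may take $\bar g=g$, which is admissible since $g\in\cG_{\bb}(y_1,\zeta,MM_2,\Delta_0,n,\eta,\beta)$ and, because $\kappa\ge1$, trivially $g\in\cG_{g}(\kappa,T)$. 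Abbreviating $R_b(y)\coloneqq T^{1/(2-2\beta)}\bigl(\Phi_b(g)-g(y)/\xi_b(y)\bigr)$ and defining $R_{\bb}$ analogously (both nonnegative by~\eqref{def:Phi} since $g\in\cG$), it follows that for every estimator $\tilde{y}_T$ the left-hand side of the claim is at least
\[
\max\Bigl\{\E_b\bigl[\ell(c_{\ell,1}R_b(\tilde{y}_T))\bigr],\ \E_{\bb}\bigl[\ell(c_{\ell,1}R_{\bb}(\tilde{y}_T))\bigr]\Bigr\}\ \ge\ \tfrac12\E_b\bigl[\ell(c_{\ell,1}R_b(\tilde{y}_T))\bigr]+\tfrac12\E_{\bb}\bigl[\ell(c_{\ell,1}R_{\bb}(\tilde{y}_T))\bigr].
\]

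Second, I would introduce the set $A\coloneqq\{y>0:R_{\bb}(y)\le c_{L,2}\}$ and the event $\mathcal A\coloneqq\{\tilde{y}_T\in A\}$. On $\mathcal A$, the separation inclusion of Proposition~\ref{prop: existence hyp}(b) forces $R_b(\tilde{y}_T)>c_{L,3}$, while on $\mathcal A\c$ one has $R_{\bb}(\tilde{y}_T)>c_{L,2}$ by definition of $A$. Hence, using only monotonicity of $\ell$,
\[
\E_b\bigl[\ell(c_{\ell,1}R_b(\tilde{y}_T))\bigr]\ \ge\ \ell(c_{\ell,1}c_{L,3})\,\Pro^T_b(\mathcal A),\qquad \E_{\bb}\bigl[\ell(c_{\ell,1}R_{\bb}(\tilde{y}_T))\bigr]\ \ge\ \ell(c_{\ell,1}c_{L,2})\,\Pro^T_{\bb}(\mathcal A\c).
\]
Since $\ell$ is increasing with $\ell(0)=0$ and $\ell\not\equiv0$, there is $t_0>0$ with $\ell(t_0)>0$; choosing $c_{\ell,1}\coloneqq t_0/\min\{c_{L,2},c_{L,3}\}$ makes both prefactors at least $\ell(t_0)$. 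Finally, the Bretagnolle--Huber inequality~\eqref{bh ineq} applied to $\mathcal A$ with $\P=\Pro^T_b$, $\Q=\Pro^T_{\bb}$, together with the Kullback--Leibler bound $\KL(\Pro^T_b,\Pro^T_{\bb})\le c_{L,1}$ of Proposition~\ref{prop: existence hyp}(a), gives $\Pro^T_b(\mathcal A)+\Pro^T_{\bb}(\mathcal A\c)\ge\tfrac12\e^{-c_{L,1}}$. Chaining the three displays yields the claim with $c_{\ell,2}\coloneqq\tfrac14\ell(t_0)\e^{-c_{L,1}}>0$, which depends only on the fixed parameters and on $\ell$, and the conclusion holds for all $T$ large enough for Proposition~\ref{prop: existence hyp} to apply.

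The only real obstacle is Proposition~\ref{prop: existence hyp} itself, i.e.\ building admissible drifts $b,\bb$ (say, localized perturbations near the origin of a linear drift, kept inside $\Sigma(\mathbf C,A,\gamma)$) and a single payoff $g$ of the type in Figure~\ref{fig: payoff} whose expected hitting-time functions $\xi_b,\xi_{\bb}$ are close enough that $\KL(\Pro^T_b,\Pro^T_{\bb})$ stays $O(1)$ in $T$ --- by Girsanov this essentially amounts to $\int(b-\bb)^2\,\d\mu_b\asymp T^{-1}$ --- yet far enough apart that the maximisers of $g/\xi_b$ and $g/\xi_{\bb}$ are moved, which, given the margin exponent $\beta$, produces a regret gap of the order $T^{-1/(2-2\beta)}$ appearing in part~(b), matching the upper bound of Theorem~\ref{thm: upper bound}. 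Once that proposition is available, the argument above is routine; within the proof of the theorem the only delicate point is accommodating a generic loss $\ell$, which is precisely why $c_{\ell,1}$ is tuned so that $\ell$ is always evaluated at the fixed positive level $t_0$. Specializing to an indicator loss then recovers the ``in probability'' form of the lower bound quoted in the introduction.
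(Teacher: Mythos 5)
Your argument is correct and follows essentially the same route as the paper's proof: take the two hypotheses $b,\bb$ and the payoffs $g,\bar g$ from Proposition~\ref{prop: existence hyp}, use the inclusion in part~(b) to transfer smallness of the $\bb$-regret into largeness of the $b$-regret, apply the Bretagnolle--Huber inequality~\eqref{bh ineq} with the KL bound of part~(a), and bound the loss from below by $\ell$ evaluated at a fixed positive level via monotonicity, finishing with $\max \ge$ average. Your explicit introduction of the event $\mathcal A=\{\tilde y_T\in A\}$ and the rescaling $c_{\ell,1}=t_0/\min\{c_{L,2},c_{L,3}\}$ is just a slightly more carefully bookkept version of the paper's choice $c_L=c_{L,2}\land c_{L,3}$ and $\ell(\alpha)>0$, so no substantive difference.
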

\begin{proof}
Let $b,\bb,g,\bar{g}$ be given as in Proposition \ref{prop: existence hyp}. 
Then, \eqref{bh ineq} implies for large enough $T$ that, for any estimator $\tilde{y}_T$,
\begin{align*}
&\P_{b}\left(\Phi_b(g)-\frac{g(\tilde{y}_T)}{\xi_{b}(\tilde{y}_T)}>c_{L,3} T^{-1/(2-2\beta)}\right)+\P_{\bb}\left(\Phi_{\bb}(g)-\frac{g(\tilde{y}_T)}{\xi_{\bb}(\tilde{y}_T)}> c_{L,2}T^{-1/(2-2\beta)} \right)\\
&\quad\ge \P_{b}\left(\Phi_b(g)-\frac{g(\tilde{y}_T)}{\xi_{b}(\tilde{y}_T)}\leq c_{L,2} T^{-1/(2-2\beta)}\right)+\P_{\bb}\left(\Phi_{\bb}(g)-\frac{g(\tilde{y}_T)}{\xi_{\bb}(\tilde{y}_T)}> c_{L,2}T^{-1/(2-2\beta)} \right)			\\
&\quad\ge \frac{1}{2}\exp(-\KL(\Pro_b^T,\Pro_{\bb}^T))			\\
&\quad\ge \frac{1}{2}\exp(-c_{L,1}).
\end{align*}
For any given loss function $\ell$, there exists some $\alpha>0$ such that $\ell(\alpha)>0$. 
Hence, setting $c_L=c_{L,2}\land c_{L,3}$,
\begin{align}\notag \label{eq: loss bh}
&\E_b\left[\ell\left(\alpha\left(c_LT^{-1/(2-2\beta)} \right)^{-1}\left(\Phi_b(g)-\frac{g(\tilde{y}_T)}{\xi_{b}(\tilde{y}_T)}\right)\right)\right]  +\E_{\bb}\left[\ell\left(\alpha\left(c_LT^{-1/(2-2\beta)} \right)^{-1}\left(\Phi_{\bb}(g)-\frac{g(\tilde{y}_T)}{\xi_{\bb}(\tilde{y}_T)}\right)\right)\right]
\\\notag
&\quad\ge \ell(\alpha)\left(\P_{b}\left(\Phi_b(g)-\frac{g(\tilde{y}_T)}{\xi_{b}(\tilde{y}_T)}>c_{L,3} T^{-1/(2-2\beta)}\right)+\P_{\bb}\left(\Phi_{\bb}(g)-\frac{g(\tilde{y}_T)}{\xi_{\bb}(\tilde{y}_T)}> c_{L,2}T^{-1/(2-2\beta)} \right)\right)
\\
&\quad\ge \frac{\ell(\alpha)}{2}\exp(-c_{L,1}),
\end{align}
which concludes the proof as $2\max(x,y)\geq x+y$ for all $x,y\in\R$.
\end{proof}
		
\subsection{Lower bound for the general case}
The results of Section \ref{sec:upsimp} and Theorem \ref{thm:lowmarg} provide an optimal convergence rate for the simple regret under the margin condition. However, since this assumption is not always satisfied or verifiable, we now consider the more general situation with payoff functions from the class $\cG(y_1,\zeta,MM_2,\mathbf{C},A,\gamma)$.
The following result completes the upper bound \eqref{eq:upgen} with a corresponding lower bound, thus verifying optimality of the rate $T^{-1/2}$ in the general situation.
		
\begin{theorem}\label{thm: lower bound gen}
Let $a>0$, $0<M<a/2$, $0<y_1\leq a/2-M$, $\zeta\geq M+3a/2$, $\mathbf{C}\geq 1$, $\gamma>0$, $A>M+a+\gamma$ be given. 
Then, for any loss function $\ell$, there exist constants $c_{\ell,3},c_{\ell,4}>0$ such that, for large enough $T$, 
\begin{align*}
\inf_{\tilde{y}_T}\sup_{b\in\Sigma(\mathbf{C},A,\gamma)}\sup_{g\in\cG(y_1,\zeta,MM_2,\mathbf{C},A,\gamma)}\E_b\left[\ell\left(c_{\ell,3}T^{1/2} \left(\Phi_b(g)-\frac{g(\tilde{y}_T)}{\xi_{b}(\tilde{y}_T)}\right)\right)\right]\geq c_{\ell,4},
\end{align*}
where the infimum extends over all estimators $\tilde{y}_T$.
\end{theorem}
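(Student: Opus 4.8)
The plan is to run the two-hypotheses scheme used in the proof of Theorem~\ref{thm:lowmarg}, but with a construction tailored to the \emph{absence} of a margin condition: since the class $\cG(y_1,\zeta,MM_2,\mathbf C,A,\gamma)$ imposes nothing like Assumption~\ref{ann:margin}, the two near-optimal barriers need only be separated by a distance of order $T^{-1/2}$, which makes the hypotheses easier to produce than those of Proposition~\ref{prop: existence hyp}.

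First I would build, for each large $T$, a baseline drift $b\in\Sigma(\mathbf C,A,\gamma)$ and a smooth perturbation $h_T$ supported on a fixed compact subinterval of $(0,A)$ with $\|h_T\|_\infty=c_\star T^{-1/2}$, so that $\bb\coloneqq b+h_T\in\Sigma(\mathbf C,A,\gamma)$ as well; the stated constraints $0<M<a/2$, $y_1\le a/2-M$, $\zeta\ge M+3a/2$, $A>M+a+\gamma$ are exactly what is needed to fit all relevant points into $(0,A)$ with the required separations. The payoff $g=g_T$ is taken to agree with $f_T\cdot\xi_b$ on $[y_1,\infty)$, where $f_T$ is piecewise linear with $f_T(y_1)=0$ and $\sup_{[y_1,\zeta]}|f_T|\le M$ (so that $\sup_{[y_1,\zeta]}|g|\le MM_2$ by the definition of $M_2$), has two downward kinks (local maxima) at points $p_1,p_2\in(a/2-M,a/2+M)$ with $|p_1-p_2|\asymp T^{-1/2}$ and gap $f_T(p_1)-f_T(p_2)\asymp T^{-1/2}$, $p_1$ being the global maximiser of $f_T$, and satisfies $f_T<0$ off a fixed compact subset of $(0,\zeta)$; near $0$ one continues $g$ so that $g(0+)<0$ and $y_1=\inf\{y>0\colon g(y)>0\}$. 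Since $g/\xi_{b'}=f_T\,\xi_b/\xi_{b'}$ and $\xi_b/\xi_{b'}$ is bounded above and below on $[y_1,\zeta]$ uniformly over $b'\in\Sigma(\mathbf C,A,\gamma)$, while $f_T$ is negative off a compact set, $g/\xi_{b'}$ attains its supremum on $(0,\zeta]$ for every such $b'$; hence $g\in\cG(y_1,\zeta,MM_2,\mathbf C,A,\gamma)$. The bump $h_T$ is placed just to the right of $p_1$, so that $\xi_{\bb}$ is essentially unchanged near $p_1$ but decreased near $p_2$ by a relative amount $\asymp\|h_T\|_\infty$ --- recall $\xi_b'=2F_b/\rho_b$ and $\log\rho_b(x)=2\int_0^x b(u)\,\d u+\mathrm{const}$, so a positive drift bump raises $\rho_b$ downstream and lowers $\xi_b'$ there. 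Choosing $c_\star$ large enough, the resulting gain of $g/\xi_{\bb}$ at $p_2$ relative to $p_1$ exceeds $f_T(p_1)-f_T(p_2)$, so the global maximiser of $g/\xi_{\bb}$ is $p_2$, again with a gap of order $T^{-1/2}$; a Lipschitz bound on $f_T$ near $p_1,p_2$ then produces constants $c_{L,2},c_{L,3}>0$ with
\[
\Bigl\{\Phi_{\bb}(g)-\tfrac{g(y)}{\xi_{\bb}(y)}\le c_{L,2}T^{-1/2}\Bigr\}\ \subseteq\ \Bigl\{\Phi_b(g)-\tfrac{g(y)}{\xi_{b}(y)}>c_{L,3}T^{-1/2}\Bigr\}.
\]

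The remaining steps mirror the end of the proof of Theorem~\ref{thm:lowmarg}. As the laws on $[0,T]$ are stationary, Girsanov's theorem gives $\KL(\Pro^T_b,\Pro^T_{\bb})=\tfrac12\E_b\bigl[\int_0^T h_T(X_s)^2\,\d s\bigr]=\tfrac T2\int h_T^2\,\d\mu_b\le\tfrac{c_\star^2}{2}\mu_b(\supp h_T)\le c_{L,1}$, uniformly over $b\in\Sigma(\mathbf C,A,\gamma)$, because $\supp h_T$ is a fixed compact set and $\sup_{b\in\Sigma(\mathbf C,A,\gamma)}\mu_b(\supp h_T)<\infty$. For any estimator $\tilde y_T$, put $E\coloneqq\{\Phi_{\bb}(g)-g(\tilde y_T)/\xi_{\bb}(\tilde y_T)\le c_{L,2}T^{-1/2}\}$; the inclusion gives $\P_b(E)\le\P_b(\Phi_b(g)-g(\tilde y_T)/\xi_b(\tilde y_T)>c_{L,3}T^{-1/2})$, so by the Bretagnolle--Huber inequality~\eqref{bh ineq} and the $\KL$ bound,
\[
\P_b\Bigl(\Phi_b(g)-\tfrac{g(\tilde y_T)}{\xi_b(\tilde y_T)}>c_{L,3}T^{-1/2}\Bigr)+\P_{\bb}\Bigl(\Phi_{\bb}(g)-\tfrac{g(\tilde y_T)}{\xi_{\bb}(\tilde y_T)}>c_{L,2}T^{-1/2}\Bigr)\ \ge\ \P_b(E)+\P_{\bb}(E\c)\ \ge\ \tfrac12 e^{-c_{L,1}}.
\]
Given a loss function $\ell$, choose $\alpha>0$ with $\ell(\alpha)>0$ and set $c_L\coloneqq c_{L,2}\wedge c_{L,3}$; since $\ell$ is nonnegative and nondecreasing, on the first event $\ell\bigl(\alpha(c_LT^{-1/2})^{-1}(\Phi_b(g)-g(\tilde y_T)/\xi_b(\tilde y_T))\bigr)\ge\ell(\alpha)$ and similarly on the second, so $\E_b[\ell(\cdot)]+\E_{\bb}[\ell(\cdot)]\ge\tfrac12\ell(\alpha)e^{-c_{L,1}}$. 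Taking the supremum over $g\in\cG(y_1,\zeta,MM_2,\mathbf C,A,\gamma)$ and $b\in\Sigma(\mathbf C,A,\gamma)$ and using $2\max(x,y)\ge x+y$ then gives the claim, with $c_{\ell,3}=\alpha/c_L$ and $c_{\ell,4}=\tfrac14\ell(\alpha)e^{-c_{L,1}}$.

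The main obstacle is the first step: exhibiting a single payoff $g$ admissible for \emph{every} $b\in\Sigma(\mathbf C,A,\gamma)$ while forcing $g/\xi_b$ and $g/\xi_{\bb}$ to be maximised $\asymp T^{-1/2}$ apart with an $\asymp T^{-1/2}$ gap, yet keeping the two drifts $O(T^{-1/2})$-close on a fixed compact set so that the Kullback--Leibler divergence remains bounded. Quantifying how a localised drift perturbation propagates through the map $\xi_b$ in~\eqref{eq:xi} and verifying all membership conditions for $\cG(\cdot)$ under the stated parameter constraints is where the work lies; this is naturally deferred to the appendix, in parallel with Proposition~\ref{prop: existence hyp}.
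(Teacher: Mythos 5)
Your overall reduction (two hypotheses, Girsanov bound on $\KL$, Bretagnolle--Huber, then the loss-function step with $2\max(x,y)\ge x+y$) is exactly the paper's scheme, and your last display onwards would go through verbatim once suitable hypotheses exist. The genuine gap is in the construction itself: with the parameters you fix, the required inclusion cannot hold. Note that by \eqref{eq:xi}, $\xi_b(x)=2\int_0^x e^{-2B(y)}\int_{-\infty}^y e^{2B(z)}\,\d z\,\d y$ with $B=\int_0^\cdot b$, so $\xi_b(x)$ depends on $b$ only through its restriction to $(-\infty,x]$. Hence a bump $h_T$ placed ``just to the right of $p_1$'' influences $\xi_{\bb}(p_2)$ relative to $\xi_{\bb}(p_1)$ only through its overlap with $(p_1,p_2)$, an interval of length $\asymp T^{-1/2}$; any part of the support beyond $p_2$ has literally no effect on $\xi$ at $p_1$ or $p_2$. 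On that overlap, $H(y)=\int h_T\lesssim \|h_T\|_\infty\,|p_2-p_1|\asymp T^{-1}$, and integrating the induced change of $F_b/\rho_b$ over $(p_1,p_2)$ gives $|\xi_b(p_2)-\xi_{\bb}(p_2)|\lesssim T^{-3/2}$, i.e.\ a \emph{relative} change of order $T^{-3/2}$, not $\asymp\|h_T\|_\infty=T^{-1/2}$ as you claim. This is far too small to overturn the value gap $f_T(p_1)-f_T(p_2)\asymp T^{-1/2}$, so the maximiser under $\bb$ does not move to $p_2$ and the key inclusion fails. A short scaling check shows the obstruction is structural: if the perturbation has height $c$ and acts over a lever arm of length $\ell$ between the two candidate maximisers, the effect on $\xi$ is $\asymp c\ell^2$ while $\KL\asymp Tc^2\ell$, so $c\ell^2\asymp T^{-1/2}$ together with bounded $\KL$ forces $\ell\gtrsim 1$: the two near-optimal barriers must be separated by an order-one distance, with only the \emph{value} gap of order $T^{-1/2}$.

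This is precisely how the paper's proof proceeds: the two peaks of the payoff-per-time sit at $a/2$ and $3a/2+\delta$ (macroscopically apart), the drifts agree on $(-\infty,a]$ and differ by a slope perturbation of size $\ep=T^{-1/2}$ on the order-one interval $(a,M+a)$, so that $\xi_{\bb}(x)\ge\xi_b(x)+\tfrac\ep2(x-a)^3$ accumulates an order-$T^{-1/2}$ relative discrepancy at $3a/2+\delta$, while the payoff gap between the peaks is $\delta\asymp T^{-1/2}$; Bretagnolle--Huber is then applied to the event $\{\tilde y_T<a\}$. Two further small points: the paper takes $g=f\,\xi_{\bb}$ (built from the perturbed $\xi$), which makes the membership $g\in\cG(y_1,\zeta,MM_2,\mathbf C,A,\gamma)$ and the two regret computations explicit; and in your $\KL$ computation the two stationary initial laws differ, so the term $\E_b[\log(\rho_b/\rho_{\bb}(X_0))]$ must be included and bounded (it is $O(1)$ here), rather than dismissed by stationarity.
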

\begin{proof}
Fix $T>0$, denote $\ep=T^{-1/2}$ and, for $a>0$, set $A_0\coloneqq M+a$.
Define the hypotheses $b,\bb\colon \R\to\R$ by letting
\begin{equation*}
\begin{split}
b(x)\coloneqq\begin{cases}
						-x, & x\leq 0,\\
						0, & 0<x\leq A_0,\\
						-(x-A_0), &A_0<x,
					\end{cases}
					\quad \text{ and }\quad  
					\bb(x)\coloneqq\begin{cases}
						b(x), & x\leq a,\\
						-\ep(x-a), & a<x\leq A_0,\\
						-(x-A_0)-\ep(A_0-a), &A_0<x.
					\end{cases}
\end{split}
\end{equation*}
Then, for large enough $T$, it follows that $b,\bb\in\Sigma(\mathbf{C},A,\gamma)$.
Now, as in the proof of Proposition \ref{prop: existence hyp}, Girsanov's theorem gives, for large enough $T$,
\begin{align}\label{eq: kl bound gen}
\notag\KL(\Pro_{b}^T,\Pro_{\bb}^T)&=\E_b\left[\log\left(\frac{\rho_b}{\rho_{\bb}}(X_0)\right)\right]+\frac 1 2 \E_b\left[\int_0^T (b(X_s)-\bb(X_s))^2\d s\right]
				\\
				&\leq c+\ep^2\E_b\left[\int_0^T\1_{(a,\infty)}(X_s)\d s\right]\notag
				\\
				&\leq c,
\end{align} 
where $c>0$ is a strictly positive constant, whose value changes from line to line.
Furthermore, \eqref{eq: xib} implies for $0<x\leq A_0$ that
\begin{equation*}
\xi_b(x)=x^2+\sqrt{\pi}x.
\end{equation*}
Additionally, we obtain for $x\in(a,A_0)$ and large enough $T$,
\begin{align}\label{eq: lower bound gen}
\xi_{\bb}(x)\notag
&=\xi_b(a)+2\int_a^x \exp\left(\ep(y-a)^2\right)\left( \sqrt{\pi}/2+a+\int_{a}^y\exp\left(-\ep(z-a)^2\right)\d z\right)\d y
\\\notag
&=\xi_b(a)+2\int_0^{x-a} \exp\left(\ep y^2\right)
\left( \sqrt{\pi}/2+a+\int_{0}^{y}\exp\left(-\ep z^2\right)\d z\right)\d y
\\\notag&\geq \xi_b(a)+2\int_0^{x-a} \left(1+\ep y^2\right)
\left( \sqrt{\pi}/2+a+y-\frac{\ep}{3}y^3\right)\d y
\\\notag&=\xi_b(a)+(\sqrt{\pi}+a+x)(x-a)+\frac{\ep}{3}(x-a)^3(\sqrt\pi+2a)+\frac{\ep}{3}(x-a)^4-\frac{\ep^2}{9}(x-a)^6
\\
&\geq \xi_b(x)+\frac{\ep}{2}(x-a)^3.
\end{align}
Now, for $0<M<a/2$, $\delta>0$, define $f,g\colon(0,\infty)\to \R$ by letting
\[	f(y)\coloneqq \begin{cases}
				M-\vert y-a/2\vert, &0<x<a\\
				M-\delta-\vert y-3a/2-\delta\vert,&x\geq a
			\end{cases}
			\quad\mathrm{ and }\quad
			g(y)\coloneqq f(y)\xi_{\bb}(y),\]
			respectively, which implies $g\in \cG(y_1,\zeta,MM_2,\mathbf{C},A,\gamma)$ for large enough $T$.
Then, for $0<x<a$,
\[\frac{g}{\xi_b}(x)=\frac{g}{\xi_{\bb}}(x), \]
and for small enough $\delta$, \eqref{eq: lower bound gen} implies
\begin{align*}
\frac{g}{\xi_b}(3a/2+\delta)&=(M-\delta)\frac{\xi_{\bb}(3a/2+\delta)}{\xi_b(3a/2+\delta)}\\
&\geq (M-\delta)\left(1+\ep\frac{(a/2+\delta)^{3}}{2(3a/2+\delta)^2+\sqrt{\pi}(3a+2\delta)}\right)
\\
				&\geq (M-\delta)\left(1+\ep\frac{a^2}{64a+32\sqrt{\pi}}\right).
			\end{align*}
			Hence, setting
			\[c_a\coloneqq \frac{a^2}{64a+32\sqrt{\pi}}\quad \text{ and } \quad \delta\coloneqq \frac{Mc_a\ep}{2}\]  gives, for large enough $T$,
			\begin{align*}
				\frac{g}{\xi_b}(3a/2+\delta)&\geq (M-\delta)\left(1+c_a\ep\right)
				\\&= M+Mc_a\ep-\delta+c_a\ep\delta
				\\&\geq M+\delta.
			\end{align*}
			Thus,  \eqref{eq: kl bound gen} and the Bretagnolle--Huber inequality imply for any estimator $\tilde{y}_T$
			\begin{align*}
				\P_b\left(\Phi_b(g)-\frac{g}{\xi_b}(\tilde{y}_T)\geq \delta\right)+\P_{\bb}\left(\Phi_{\bb}(g)-\frac{g}{\xi_{\bb}}(\tilde{y}_T)\geq \delta\right) 
				&\geq \P_b\left(\tilde{y}_T<a\right)+\P_{\bb}\left(\tilde{y}_T\geq a\right) 
				\\
				&\geq c,
\end{align*}
for some strictly positive constant $c$. 
Arguing as in the derivation of \eqref{eq: loss bh} concludes the proof.
		\end{proof}

\section{Implications for the cumulative regret}\label{sec: cum}
In this section, we analyse the significance of the results of Sections \ref{sec: upper bound} and \ref{sec: lower bound} for the cumulative regret. In particular, we examine their implications for the exploration-exploitation strategy presented in \cite{chris23}, before deriving minimax lower bounds for the cumulative regret of data-driven impulse control strategies.

\subsection{Upper bounds on the cumulative regret}\label{subsec: upp cum}
The results of Section \ref{sec:upsimp} address the issue of \emph{exploration}, i.e., the identification of an estimator of the unknown optimal threshold. 
However, a data-driven control strategy must also consider the \emph{exploitation aspect} at the same time, since we want to maximize our expected reward. Balancing the costs of exploration and exploitation in an appropriate way was one of the main contributions of \cite{chris23}, see also \cite[Section 2.2.3]{chris23+}. 
The basic idea, which is also applicable here, is to choose a strategy that gives us a subpath of length $S_T$ up to time $T$ (exploration periods), on which we observe the unstopped process, see Figure \ref{fig: datadriven}.  
  \begin{figure}[h!]
			\centering
			\includegraphics[width=\linewidth]{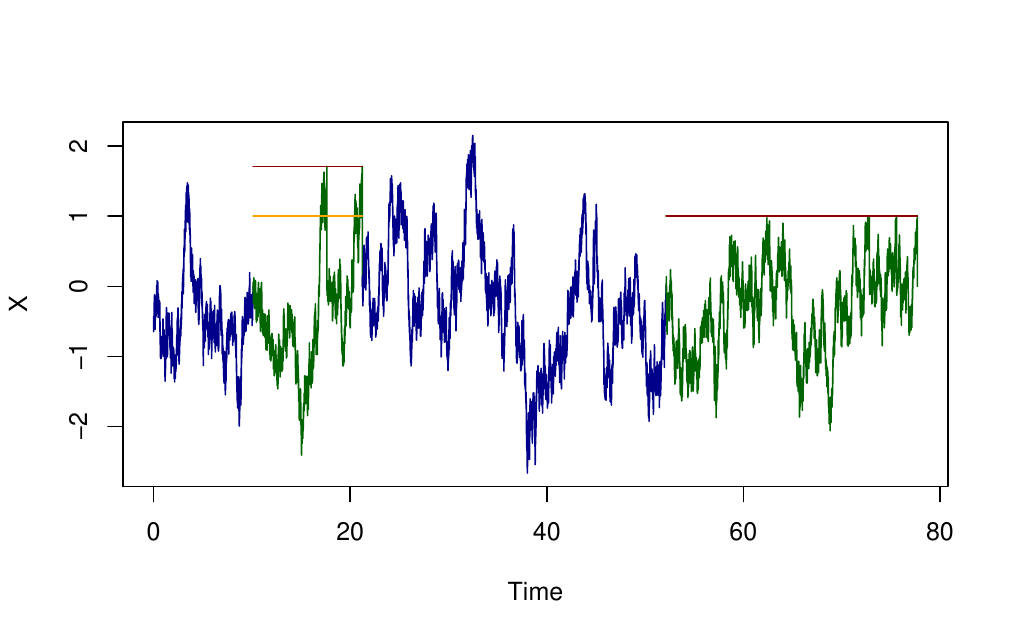}
			\caption{Plot of a process controlled by the proposed data-driven strategy. The \textcolor{darkblue}{exploration periods} are marked blue and the \textcolor{darkgreen}{exploitation periods} are green. The \textcolor{dex}{estimated barrier} is given in red, whereas the \textcolor{orange}{true optimal barrier} is orange.}
			\label{fig: datadriven}
		\end{figure}
On this subpath, we can then apply the estimation procedure studied in Section \ref{sec:upsimp}. 
However, since observing the process without stopping is suboptimal for the underlying stopping problem, this incurs a cost of order $S_T$. The rest of the time (exploitation periods), we try to stop the process at the optimal boundary. For this purpose, information about the stopped process of length $S_t$ is available at time $t$. Now, if the estimation procedure for the static regret yields the order $O(\psi(S_t))$, this leads to cumulative costs of order $\int\psi(S_t) \d t$ up to $T$, where we integrate over the exploitation periods until $T$. To implement the procedure, one now balances both costs.
For example, for margin parameters $\beta\in(0,1)$, by Theorem \ref{thm: upper bound}, we are in the case $\psi(t)=t^{-\frac{1}{2-2\beta}}$, so the choice $S_T=T^{\frac{2-2\beta}{3-2\beta}}$ leads to balancing of the costs and thus to a cumulative regret of this same order.

\subsection{Lower bound for the cumulative regret}
Similar to the bandit context (see, for example, Corollary 1 of \cite{loca18}, or Section 3 of \cite{bubetal11}), we can apply our lower bounds for the simple regret to obtain minimax lower bounds for the cumulative regret. Before stating this result, we recall that, given an increasing sequence of stopping times $K=(\tau_n)_{n\in\N}$ satisfying $\lim_{n\to\infty}\tau_n=\infty$, the controlled process $\X^K=(X^K_t)_{t\geq0}$ is defined to follow the dynamics \eqref{eq: sde} for $\tau_n\leq t<\tau_{n+1}$ and satisfies $X^K_{\tau_n}=0,$ for any $n\in\N$. We now introduce the notion of \emph{impulse control strategies}, which can be interpreted as data-driven control strategies related to the problem under consideration. 

\begin{definition}
A sequence of stopping times $K=(\tau_n)_{n\in\N_0}$ is called an \emph{impulse control strategy wrt $\cG(y_1,\zeta,M,\mathbf{C},A,\gamma)$} if
\[\tau_0=0,\quad \tau ^K_n\coloneqq\inf\{t\geq \tau_{n-1}: X_t\geq y_n \},\quad \forall n \in\N, \]
where each $y_n$ is an $\cF_{\tau_{n-1}}$-measurable random variable such that $y_n\in[y_1,\zeta]$ a.s., for $(\cF_t)_{t\geq0}$ denoting the natural filtration of $\X^K$. 
We call $K$ a \emph{general impulse control strategy} if there exists a set $\cG(y_1,\zeta,M,\mathbf{C},A,\gamma)$ such that $K$ is an impulse control strategy wrt $\cG(y_1,\zeta,M,\mathbf{C},A,\gamma)$.
 \end{definition}

For each impulse control strategy $K$, the expected cumulative regret up to time $T$ wrt a payoff function $g\in \cG(y_1,\zeta,M,\mathbf{C},A,\gamma)$ and a drift $b\in\sigma(\mathbf{C},A,\gamma)$ can then be written as 
  \[\Phi_b(g)T- \E_b\left[\sum_{n:\tau_n\leq T} g(X^K_{\tau_{n-}}) \right],\]
which corresponds to the expected loss of following the impulse control strategy $K$ instead of the optimal strategy until time $T$.

\begin{theorem} \label{thm: lower bound cum}
Let everything be given as in Theorem \ref{thm: lower bound gen}. 
Then, there exists a constant $c_{\ell,5}>0$ such that, for large enough $T>0$,
\[\inf_K\sup_{b\in\Sigma(\mathbf{C},A,\gamma)}\sup_{g\in\cG(y_1,\zeta,MM_2,\mathbf{C},A,\gamma)}\left(\Phi_b(g)T- \E\left[\sum_{n:\tau_n\leq T} g(X^K_{\tau_{n-}}) \right]\right)\geq c_{\ell,5}\sqrt{T},  \]
where the infimum extends over all impulse control strategies wrt $\cG(y_1,\zeta,MM_2,\mathbf{C},A,\gamma)$.
\end{theorem}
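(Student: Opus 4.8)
The plan is to reduce the cumulative regret lower bound to the simple regret lower bound machinery already developed in the proof of Theorem \ref{thm: lower bound gen}, exploiting the renewal structure of an impulse control strategy. The key observation is that, for a fixed impulse control strategy $K=(\tau_n)_{n\in\N_0}$ and at a (deterministic) cut-off time $t<T$, the barrier $y_{N_t+1}$ chosen after the last renewal before $t$ (where $N_t\coloneqq\max\{n:\tau_n\le t\}$) is an $\cF_{\tau_{N_t}}$-measurable estimator built only from the observed stopped path up to that renewal; hence it plays the role of the estimator $\tilde y_T$ in the simple regret setting. First I would use the same two hypotheses $b,\bb\in\Sigma(\mathbf{C},A,\gamma)$ and the same payoff function $g\in\cG(y_1,\zeta,MM_2,\mathbf{C},A,\gamma)$ as constructed in the proof of Theorem \ref{thm: lower bound gen}, with $\ep=T^{-1/2}$, so that the bound on $\KL(\Pro_b^T,\Pro_{\bb}^T)\le c$ (see \eqref{eq: kl bound gen}) continues to hold uniformly over all impulse control strategies — this is legitimate because the Girsanov/KL computation only used the drift difference being supported on $(a,\infty)$ and the occupation time being $O(T)$, which remains true for the controlled process $\X^K$ as well (the process is reset to $0$ at each $\tau_n$, only shortening the excursions above $a$).

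The next step is to turn the per-round regret into a cumulative statement. Each completed round contributes an expected payoff $\E_b^0[g(X_{\tau_{y_n}-})]$ rather than the optimal $\Phi_b(g)\,\E_b^0[\tau_{y_n}]$, and by the argument leading to \eqref{def:Phi} (Proposition 1 in \cite{chris23}), the single-round regret is $(\Phi_b(g)-\tfrac{g}{\xi_b}(y_n))\,\xi_b(y_n)$; summing over $n$ with $\tau_n\le T$ and using $\xi_b(y_n)\ge M_1$ together with $\E_b[N_T]\gtrsim T/M_2$ (the renewal process accumulates at rate at least $1/M_2$ since $\xi_b(y)\le M_2$ on $[y_1,\zeta]$), one finds the cumulative regret is bounded below by a constant times $T$ times the typical per-round simple regret. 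More precisely, I would lower bound the cumulative regret by restricting attention to a single designated round — say the one straddling time $T/2$ — and show that on the event that the associated barrier $y_{N_{T/2}+1}$ lands in the "wrong" region (i.e., $<a$ under $\bb$, or $\ge a$ under $b$, in the notation of the proof of Theorem \ref{thm: lower bound gen}), that round alone contributes $\Omega(\ep)=\Omega(T^{-1/2})$ to the regret, and this happens with probability bounded below under at least one of the two hypotheses by the Bretagnolle--Huber inequality \eqref{bh ineq} and the KL bound. Multiplying the $\Omega(T^{-1/2})$ per-round cost by the $\Omega(\sqrt T)$ rounds that must occur — or more carefully, by showing a constant fraction of rounds in a window of length $\Omega(\sqrt T)$ around $T/2$ each incur $\Omega(T^{-1/2})$ regret with constant probability — yields the claimed $c_{\ell,5}\sqrt T$ lower bound after taking $2\max(x,y)\ge x+y$ as in \eqref{eq: loss bh}.

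The main obstacle I anticipate is the second step: making rigorous the claim that a nonnegligible block of rounds (of total length $\Omega(\sqrt T)$) is simultaneously "exposed" to the same indistinguishability argument, since the barriers $y_n$ are adaptively chosen and the hypotheses become distinguishable as more data accumulates. The clean way around this is \emph{not} to argue about many rounds at once, but to fix the \emph{first} round that ends after time $\ep^{-2}/2=T/2$ — at that point only $O(T)$ units of observation time have elapsed, so the KL bound $\le c$ still applies, and the barrier for that round is an $\cF_{\tau_{N_{T/2}}}$-measurable estimator. If that single barrier is in the wrong region, the process spends an $\Omega(1)$ expected amount of time (in fact $\Omega(\ep^{-1})$, but $\Omega(1)$ suffices) being stopped at a boundary yielding payoff deficit $\Omega(\ep)$ per unit reward gap; one must be slightly careful that this single round's deficit is genuinely $\Omega(\sqrt T)$ rather than $\Omega(1)$. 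This is where the specific geometry of the construction in Theorem \ref{thm: lower bound gen} helps: the gap $\tfrac{g}{\xi_b}(3a/2+\delta)-M\ge\delta=Mc_a\ep/2$ is of order $\ep$, while the inter-renewal time $\xi_b(y_n)$ is of order $1$, so a single misdirected round costs only $\Omega(\ep)=\Omega(T^{-1/2})$ — hence one truly does need $\Omega(\sqrt T)$ such rounds. I would therefore instead bound the cumulative regret from below by $\E_b\big[\sum_{n:\tau_n\le T}(\Phi_b(g)-\tfrac{g}{\xi_b}(y_n))\xi_b(y_n)\big]$, apply Wald-type identity to write this as $\gtrsim T\cdot\E_b[\Phi_b(g)-\tfrac{g}{\xi_b}(Y)]$ for $Y$ a size-biased-uniform random round barrier, and then observe that $\Phi_b(g)-\tfrac{g}{\xi_b}(Y)$ is itself a simple-regret quantity for an $\cF$-measurable estimator to which the Bretagnolle--Huber argument applies uniformly (the KL bound being dimension-free in $T$), giving $\E_b[\Phi_b(g)-\tfrac{g}{\xi_b}(Y)]\gtrsim \ep=T^{-1/2}$ under whichever of $b,\bb$ realizes the max; multiplying by the $\Omega(T)$ factor from the number of rounds gives $\Omega(\sqrt T)$.
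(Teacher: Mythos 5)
Your plan is correct and follows essentially the same route as the paper's proof: the same hypotheses $b,\bb$ and payoff $g$ from Theorem \ref{thm: lower bound gen}, a Girsanov argument showing the Kullback--Leibler bound persists for the \emph{controlled} process uniformly over strategies (the paper isolates this as Lemma \ref{lemma: KL cum}), a round-wise decomposition of the cumulative payoff via the conditional round durations $\xi_b(y_n)$, and the Bretagnolle--Huber inequality applied to how much time the barriers spend in the ``wrong'' region. Your size-biased random barrier $Y$ is just a repackaging of the paper's event $\{T_1\ge T/2\}$ (total time with barriers below $a$), so after discarding your initial single-round detour the two arguments coincide.
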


The proof of Theorem \ref{thm: lower bound cum} can be found in Appendix \ref{app:A}. We see that the lower bound for the cumulative regret given in the previous theorem is of the form $T\times\psi(T)$, where $\psi(T)=T^{-1/2}$ denotes the lower bound for the simple regret established in Theorem \ref{thm: lower bound gen}. This is not a surprising result, since the payoff per time in each period can be bounded from below by the lower bound on the simple regret. Lower bounds for the cumulative regret of the same form are also common in the bandit literature, see e.g.\ Corollary 1 in \cite{loca18}. Moreover, we note that using the results of Theorem \ref{thm:lowmarg}, similar minimax lower bounds of the same form (i.e., $T\times T^{-1/(2-2\beta)}=T^{(1-2\beta)/(2-2\beta)}$) can be obtained for the cumulative regret of data-driven impulse control strategies under the margin condition. 
However, we omit the rigorous proof of this result as it does not provide much further insight.

\section{Concluding remarks and simulation study}\label{sec: fin}
It is well known that there are hardly any theoretical guarantees for the multitude of RL algorithms used in practice. 
For a brief overview of the current state of the art, we refer to Section 1.2 in \cite{chris23++}.
In order to at least gain an understanding of the underlying principles of decision making under uncertainty, which are not transparent in the general setting of Markov decision processes, multi-armed bandits are often studied as a simplified model.
This approach is also used, for example, in the standard textbook treatment \cite{sutbar18}.
While there is an extensive literature with precise results for multi-armed bandits, in contrast to many unproven methods for the general Markov decision situation, it is also clear that the stochastic bandit model has limitations in reflecting real-world scenarios.
Extending the standard assumptions (such as moving from an i.i.d. setting to the more realistic assumption of dependent reward distributions) remains a challenge. 
We propose the alternative approach of analysing stochastic control problems with a nonparametric statistical structure. Basic principles (such as the construction of data-based strategies and the analysis of the exploration-exploitation problem) were clarified in the initial work \cite{chris23}. The follow-up paper \cite{chris23+} provides a simultaneous analysis of stochastic control problems for diffusion and Lévy processes, thus confirming the robustness of the underlying data-based control approach. The recent preprint \cite{chris23++} extends the proposed principle to the context of multivariate reflection problems. While the aforementioned studies can be understood primarily as proofs of concept and do not address issues such as optimality, the focus of this paper is on the statistical aspects of the stochastic control problem.
This motivates the study of the pure exploration problem of minimising the simple regret, which we solve by providing an estimator of the optimal stopping barrier that is rate-optimal in a variety of settings. Furthermore, our analysis allows us to present uniform, non-asymptotic PAC bounds for the simple regret, enabling a decision maker to decide how much data is needed to make a reasonable stopping decision.

\paragraph{Future research}
The present analysis provides a fundamental basis for the investigation of the cumulative regret, where a number of open questions remain. First, the optimal choice of exploration time $S_T$ in the exploration-exploitation strategy introduced in Section \ref{subsec: upp cum} is given as $S_T=T^{\frac{2-2\beta}{3-2\beta}}$ under the margin condition, and thus depends on the generally unknown parameter $\beta$. Therefore, it could be interesting to construct an adaptive procedure that asymptotically achieves the selection of the optimal stopping barrier, together with an optimal choice of exploration time. Furthermore, our approach relies on an indirect estimation of $\xi_b$ through the invariant density and the corresponding distribution function. It might also be worthwhile to investigate a more direct approach by directly estimating the expected hitting times. This could also pave the way for novel strategies specifically designed for minimising the cumulative regret, which is particularly important since, at least in the bandit setting (see Section 3 in \cite{bubetal11}), optimising the simple or the cumulative regret are fundamentally different tasks requiring different policies. For example, one could investigate strategies that do not split the process into exploration and exploitation periods, but use the data from each exploitation cycle to directly estimate $\xi_b$ and thus the optimal stopping barrier. 
Following the idea of UCB strategies (see Chapters 7 to 10 in \cite{latti20}), a summand depending on the uncertainty could be added to the estimated optimal stopping barrier $\hat{y}$, ensuring that not only the values of $\xi_b$ up to $\hat{y}$ can be estimated. However, since this approach in the bandit literature is based on the assumption that the distributions of the arms are sub-Gaussian, such a result seems to be out of reach at the moment, especially since the distributions of the first hitting times are generally unknown. Finally, a more thorough investigation of a lower bound for the cumulative regret under the margin condition might be interesting, aiming at results similar to Theorem 3 in \cite{loca18}. Intuitively, this theorem shows that a strategy that achieves an optimal cumulative regret over a class of regular payoff functions cannot achieve optimal results over a class of more irregular payoff functions. We conjecture that analogous results are also possible in the given setting, corresponding to showing that if a strategy achieves an optimal regret for all payoff functions satisfying the margin condition with parameter $\beta_1\in(0,1)$, it cannot uniformly achieve the optimal cumulative regret over all payoff functions with regularity $\beta_2\in(0,1)$ if $\beta_1>\beta_2$.

\paragraph*{Simulation study}
To gain insight into our theoretical results in a more practical setting, we perform a brief simulation study. 
For this, we simulate an Ornstein--Uhlenbeck process with drift $b(x)=-x/2$, using an Euler--Maruyama scheme with step size $10^{-2}$ and time horizon $T$ in \texttt{R}. 
Note that this particular choice of drift function implies that the invariant distribution is standard normal. 
\begin{figure}[h!]
			\includegraphics[width=.5\linewidth]{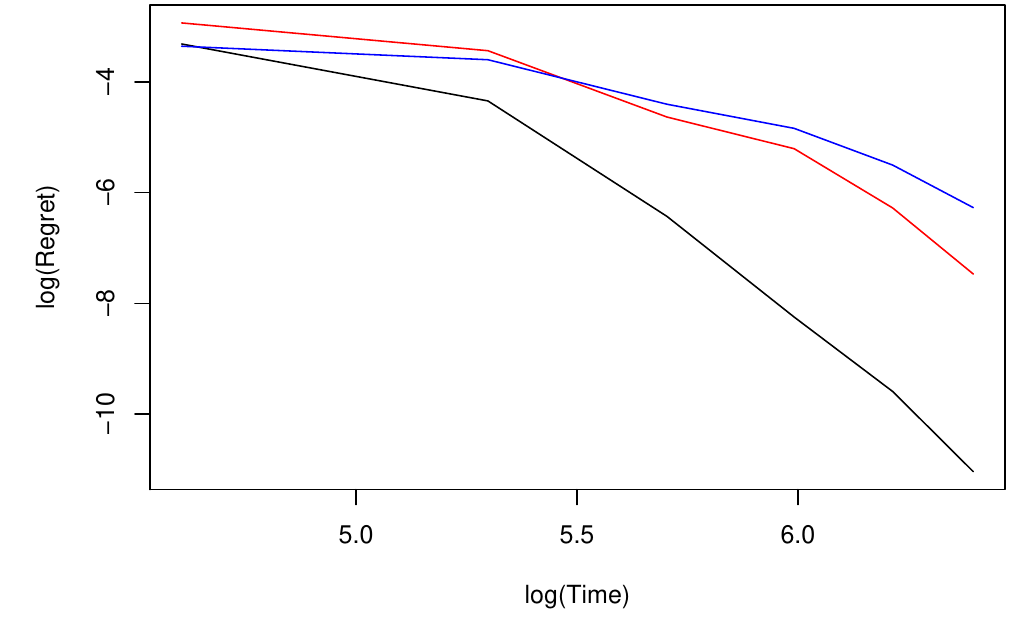}
			\hspace*{-0.6em}
			\includegraphics[width=.5\linewidth]{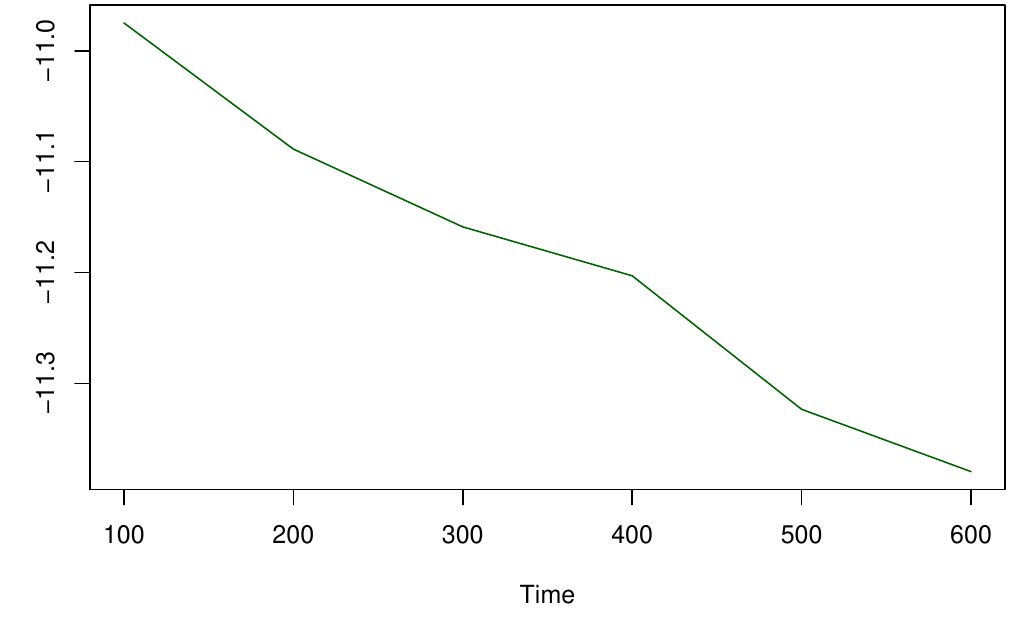}
			\caption{Plot of the logarithm of the simple regret for $\beta=\textcolor{blue}{0.25},\textcolor{red}{0.5},0.75,$ against $\log(T)$ on the left and plot of the logarithm of the simple regret against $T$ for $\beta=\textcolor{WIMgreen}{1}$ on the right.}
   \label{fig: sim1}
		\end{figure}
		As payoff functions, we choose
		\[g(x)=\left(1-\vert 1-x\vert^{1/\beta}\right)\xi_b(x),\quad x>0, \]
		with $\beta\in\{0.25,0.5,0.75,1\}$. 
Plots of these functions are given in Figure \ref{fig: payoff}.
For estimating the quantities of interest, we employ the commands \texttt{ecdf} and \texttt{density} from the \texttt{R} package \texttt{stats}, with bandwidth $T^{-1/2}$ and the Epanechnikov kernel for the latter. 
In the cases where $\beta<1$, we choose $T$ on a logarithmic grid ranging from $\e^3$ to $\e^8$.
For $\beta=1$, we consider $T\in \{100,200,\ldots,600\}$. 
For each time horizon and choice of $\beta$, the estimation procedure is then repeated $50$ times. 
The results of this simulation study can be seen in Figure \ref{fig: sim1}, where the logarithm of the mean simple regret, corresponding to $\vert \hat{y}_T-1\vert^{1/\beta}$ in the given setting, is plotted against $\log(T)$ for $\beta<1$, respectively $T$ itself for $\beta=1$.
We see that the slope gets steeper as $\beta$ increases on the left-hand side and also that the logarithm of the simple regret decreases linearly for $\beta=1$, which is both in accordance with our theoretical results in Theorem \ref{thm: upper bound}.

		\appendix
		
		\section{Remaining proofs}\label{app:A}
		This appendix contains the proof of three crucial auxiliary results and of the lower bound for the cumulative regret stated in Theorem \ref{thm: lower bound cum}.
		We start by providing the proof of the moment bounds for the estimators of the invariant density and distribution function, respectively.
		
		\begin{proof}[Proof of part (a) of Proposition \ref{prop: est mom}]
			Throughout the proof, we denote by a slight abuse of notation $\Sigma=\Sigma(\mathbf{C},A,\gamma)$.
			Let $p\ge 1$.
			Firstly, we note that, for any $x\in\R$, $b\in\Sigma$,
			\begin{align*}
				&\E_b\left[\big\vert\hat{\rho}_T(x)-\rho_b(x)\big\vert^p\right]\\
				&\quad\le 2^{p-1}\Bigg(\E_b\left[\left\vert\frac{1}{T}\int_0^T K_T(x-X_s)\d s-\E_b[ K_T(x-X_0)\right\vert^p \right]+\left\vert\E_b\left[ K_T(x-X_0)\right]-\rho_b(x)\right\vert^p \Bigg).
			\end{align*}
			Since $b\in\Sigma$ implies $\rho_b\in\cH(1,\cL)$ for some constant $\cL,$ classic arguments (see Proposition 1.2 in \cite{tsy09}) give, since $K$ is bounded with compact support,
			\begin{equation*}
				\sup_{b\in\Sigma}\left\vert\E_b\left[ K_T(x-X_0)\right]-\rho_b(x)\right\vert^p
				\le\left(\cL T^{-1/2}\int \vert u K(u)\vert \d u \right)^p.
			\end{equation*}
			Furthermore, Proposition 7 in \cite{aeck21} gives that there exists a constant $c_1>0$ such that
			\begin{align*}
				&\sup_{b\in\Sigma}\E_b\left[\left\vert\frac{1}{T}\int_0^T K_T(x-X_s)\d s-\E_b[ K_0(x-X_s)]\right\vert^p\right]\\
				&\quad=\sup_{b\in\Sigma}\E_b\left[\left\vert\frac{1}{\sqrt{T}}\int_0^T K((x-X_s)T^{1/2})-\E_b[ K((x-X_0)T^{1/2})]\d s\right\vert^p\right]\\
				&\quad\le c_1^p T^{-p/2}\left( p^{p/2}+ p^p T^{-p/2} \right),
			\end{align*}
			completing the proof of the first claim.
			For the second assertion, applying Itô's formula as in \cite{kuto01} shows that, for any $x\in\R$,
			\begin{equation*}
				\hat{F}_T(x)-F_b(x)
				=\frac 2 T \left(\int_{X_0}^{X_T} \frac{F_b(u\land x)-F_b(u)F_b(x)}{\rho_b(u)}\d u-\int_0^T \frac{F_b(x\land X_s)-F_b(x)F_b(X_s)}{\rho_b(X_s)}\d W_s \right).
			\end{equation*}
			Now note that, for any $u,x\in\R$,
			\begin{align*}
				F_b(u\land x)-F_b(u)F_b(x)&=\begin{cases}
					F_b(x)(1-F_b(u)), \quad &x\leq u,\\
					F_b(u)(1-F_b(x)), \quad &x>u
				\end{cases}\\
				&\le F_b(u)(1-F_b(u)),
			\end{align*}
			implying that
			\begin{align}
				\frac{F_b(u\land x)-F_b(u)F_b(x)}{\rho_b(u)}&\le \left(\sup_{z\geq0}\frac{1-F_b(z)}{\rho_b(z)}\lor\sup_{z\leq 0}\frac{F_b(z)}{\rho_b(z)}\right)\notag\\
				&\le c_F\label{eq: cdf proof},
			\end{align}
			where $c_F>0$ is a constant depending on the class $\Sigma$ (see the proof of Proposition 7 in \cite{aeck21}). 
			Hence, an application of equation (4.4) in \cite{aeck21} gives 
			\begin{align*}
				\sup_{b\in\Sigma}\E_b\left[\left\vert\int_{X_0}^{X_T} \frac{F_b(u\land x)-F_b(u)F_b(x)}{\rho_b(u)}\d u\right\vert^p \right]
				&\le c_F^p\sup_{b\in\Sigma}\E_b[\vert X_T-X_0\vert^p]\\
				&\le c_F^p2^p\sup_{b\in\Sigma}\E_b[\vert X_0\vert^p]\\
				&\le c_{\mathrm{mom}}^pc_F^p2^pp^p,
			\end{align*}
			where $c_{\mathrm{mom}}>0$ is again a constant depending only on the class $\Sigma$. 
			Applying \eqref{eq: cdf proof} together with the Burkholder--Davis--Gundy inequality then also yields for a constant $c_p>0$, depending on $p$,
			\begin{align*}
				&\sup_{b\in\Sigma}\E_b\left[\left\vert\int_0^T \frac{F_b(x\land X_s)-F_b(x)F_b(X_s)}{\rho_b(X_s)}\d W_s\right\vert^p\right]\\
				&\quad\leq c_p \sup_{b\in\Sigma}\E_b\left[\left\vert\int_0^T \left(\frac{F_b(x\land X_s)-F_b(x)F_b(X_s)}{\rho_b(X_s)}\right)^2\d s\right\vert^{p/2}\right]\\
				&\quad\leq c_p c_F^p T^{p/2}.
			\end{align*}
			Since Proposition 4.2 in \cite{barlow82} gives that $c_p\leq c^p p^{p/2}$ for some universal constant $c>0$, the proof of the second assertion is complete. 
		\end{proof}

			\begin{proof}[Proof of Lemma \ref{lemma: local upper bound}]
				Throughout the proof, we denote 
				\[
				\bar{\rho}_T(x)\coloneqq \hat{\rho}_T(x)\lor a, \quad x\in\R.
				\] 
				Let $p\ge1$, $b\in\Sigma(\mathbf{C},A,\gamma)$, $g\in \cG_b(y_1,\zeta,M,\Delta_0,n,\eta,\beta)$, and $0<\Delta<\Delta_0$ be given, and
				recall that
				\[
				\mathcal E(\Delta)=\left\{y\in[y_1,\zeta]\colon\Phi_b(g)-\frac{g}{\xi_b}(y)\le \Delta\right\}.
				\]
				Then, by the margin condition, there exist points $y_1^*,\ldots,y_n^*\in(y_1,\zeta)$ such that
				\begin{align}\nonumber
					&\E_b\left[ \left\vert\Phi_b(g)-\frac{g}{\xi_b}(\hat{y}_T) \right\vert^p\1_{\mathcal E(\Delta)}(\hat{y}_T)  \right]
					\\\nonumber
					&\quad\le \sum_{i=1}^{n}\E_b\left[ \Big\vert\Phi_b(g)-\frac{g}{\xi_b}(\hat{y}_T) \Big\vert^p\1_{\mathcal E(\Delta)\bigcap (y_i^*-\eta\Delta^\beta/2,y_i^*+\eta\Delta^\beta/2)}(\hat{y}_T)\right]
					\\\nonumber
					&\quad\le\sum_{i=1}^{n}\E_b\left[ \Big\vert\Phi_b(g)-\frac{g}{\xi_b}(\hat{y}_T)+\frac{g}{\hat{\xi}_T}(\hat{y}_T)-\frac{g}{\hat{\xi}_T}(y_i^*) \Big\vert^p\1_{\mathcal E(\Delta)\bigcap (y_i^*-\eta\Delta^\beta/2,y_i^*+\eta\Delta^\beta/2)}(\hat{y}_T)\right]
					\\\nonumber
					&\quad\le\sum_{i=1}^{n}\E_b\left[\sup_{y\in\mathcal E(\Delta)\colon  \vert y_i^*-y\vert\leq \eta\Delta^\beta} \Big\vert\Phi_b(g)-\frac{g}{\xi_b}(y)+\frac{g}{\hat{\xi}_T}(y)-\frac{g}{\hat{\xi}_T}(y_i^*)\Big\vert^p  \right]
					\\\nonumber
					&\quad= \sum_{i=1}^{n}\E_b\left[\sup_{y\in\mathcal E(\Delta)\colon  \vert y_i^*-y\vert\leq \eta\Delta^\beta} \Big\vert g(y)\frac{\xi_b(y)-\hat{\xi}_T(y)}{\xi_b(y)\hat{\xi}_T(y)}
					-g(y_i^*)\frac{\xi_b(y_i^*)-\hat{\xi}_T(y_i^*)}{\xi_b(y_i^*)\hat{\xi}_T(y_i^*)}  \Big\vert^p\right]\eqqcolon \sum_{i=1}^n\mathscr T_i.
				\end{align}
				Now, for any $i\in\{1,\ldots,n\}$, it holds
				\begin{align}\nonumber
					\mathscr T_i&\le
					2^{p-1}\E_b\left[\sup_{y\in\mathcal E(\Delta)\colon  \vert y_i^*-y\vert\leq \eta\Delta^\beta} \Big \vert g(y)\frac{\xi_b(y)-\hat{\xi}_T(y)}{\xi_b(y)\hat{\xi}_T(y)}-g(y_i^*)\frac{\xi_b(y)-\hat{\xi}_T(y)}{\xi_b(y_i^*)\hat{\xi}_T(y)} \Big\vert^p  \right]
					\\\nonumber
					&\quad+2^{p-1}\E_b\left[\sup_{y\in\mathcal E(\Delta)\colon  \vert y_i^*-y\vert\leq \eta\Delta^\beta} \Big \vert g(y_i^*)\frac{\xi_b(y)-\hat{\xi}_T(y)}{\xi_b(y_i^*)\hat{\xi}_T(y)}-g(y_i^*)\frac{\xi_b(y_i^*)-\hat{\xi}_T(y_i^*)}{\xi_b(y_i^*)\hat{\xi}_T(y_i^*)} \Big\vert^p \right]
					\\\nonumber
					&\le 2^{2p-1}M_1^{-p}\E_b\left[\sup_{y\in\mathcal E(\Delta)\colon  \vert y_i^*-y\vert\leq \eta\Delta^\beta}\Big\vert\Phi_b(g)-\frac{g(y)}{\xi_b(y)} \Big\vert^p 
					\left \vert \xi_b(y)-\hat{\xi}_T(y) \right\vert^p  \right]
					\\\nonumber
					&\quad+2^{3p-1}M_1^{-2p} M^p
     \\&\qquad\nonumber\times
					\E_b\left[\sup_{y\in\mathcal E(\Delta)\colon  \vert y_i^*-y\vert\leq \eta\Delta^\beta}\Big \vert \big(\xi_b(y)-\hat{\xi}_T(y)\big)\hat{\xi}_T(y_i^*)-\big(\xi_b(y_i^*)-\hat{\xi}_T(y_i^*)\big)\hat{\xi}_T(y) \Big\vert^p  \right]
					\\\label{eq: long comp 00}
					&\le2^{2p-1}M_1^{-p}\Delta^p\E_b\left[\sup_{y\in[y_1,\zeta]\colon\vert y_i^*-y\vert\leq \eta\Delta^\beta} \left \vert \xi_b(y)-\hat{\xi}_T(y) \right\vert^p  \right]+2^{4p-2}M_1^{-2p} M^p\left(\mathscr T_{i,1}+\mathscr T_{i,2}\right)
				\end{align}
				with
				\begin{align}\nonumber
					\mathscr T_{i,1}&\coloneqq \E_b\left[\sup_{y\in\mathcal E(\Delta)\colon  \vert y_i^*-y\vert\leq \eta\Delta^\beta} \Big \vert\big(\xi_b(y_i^*)-\hat{\xi}_T(y_i^*)\big)\big(\hat{\xi}_T(y_i^*)-\hat{\xi}_T(y)\big) \Big\vert^p  \right]\\\label{eq: long comp 0}
					&\ \ \le \eta^p\Delta^{\beta p}a^{-p}\E_b\left[ \left \vert\xi_b(y_i^*)-\hat{\xi}_T(y_i^*)\right\vert^p  \right],\\\nonumber
					\mathscr T_{i,2}&\coloneqq \E_b\left[\sup_{y\in\mathcal E(\Delta)\colon  \vert y_i^*-y\vert\leq \eta\Delta^\beta}\Big\vert\big(\xi_b(y)-\hat{\xi}_T(y)-\xi_b(y_i^*)+\hat{\xi}_T(y_i^*)\big)\hat{\xi}_T(y_i^*) \Big\vert^p  \right]\\\label{eq: long comp 1}
					&\ \ \le \zeta^pa^{-p}\E_b\left[\sup_{y\in[y_1,\zeta]\colon \vert y_i^*-y\vert\leq \eta\Delta^\beta} \left\vert \xi_b(y)-\hat{\xi}_T(y)-\xi_b(y_i^*)+\hat{\xi}_T(y_i^*) \right\vert^p  \right],
				\end{align}
				where we used the condition $\Phi_b(g)-g(y)/\xi_b(y)\leq \Delta$ and the Lipschitz continuity of $\hat{\xi}_T$, combined with $\vert y^*_i-y\vert\leq \eta\Delta^\beta$. 
				Furthermore using that $\max(a,b)=\tfrac12(a+b+\vert a-b\vert)$ for any $a,b\in\R$, we get that, for any $y\in[y_1,\zeta]$, 
				\begin{align*}
					& \left\vert \xi_b(y)-\hat{\xi}_T(y)-\xi_b(y_i^*)+\hat{\xi}_T(y_i^*) \right\vert^p \\
					&= \left\vert \int_y^{y^*}\left(\frac{\hat{F}_T(z)}{\bar{\rho}_T(z)}-\frac{2F_b(z)}{\rho_b(z)}\right)\d z-\frac 1 2 \left\vert2\int_0^y\frac{\hat{F}_T(z)}{\bar{\rho}_T(z)}\d z-\frac{M_1}{2}\right\vert+\frac 1 2 \left\vert2\int_0^{y^*}\frac{\hat{F}_T(z)}{\bar{\rho}_T(z)}\d z-\frac{M_1}{2}\right\vert \right\vert^p
					\\
					&\le2^p\Bigg(2^p\left\vert \int_y^{y^*}\frac{\hat{F}_T(z)}{\bar{\rho}_T(z)}-\frac{F_b(z)}{\rho_b(z)}\d z \right\vert^p\\&\hspace*{5em}+\left\vert \int_y^{y^*}\frac{\hat{F}_T(z)}{\bar{\rho}_T(z)}\d z+\frac 1 2 \left\vert2\int_0^y\frac{\hat{F}_T(z)}{\bar{\rho}_T(z)}\d z-\frac{M_1}{2}\right\vert-\frac 1 2 \left\vert2\int_0^{y^*}\frac{\hat{F}_T(z)}{\bar{\rho}_T(z)}\d z-\frac{M_1}{2}\right\vert \right\vert^p\Bigg),
				\end{align*}
implying that
\begin{align*}
&\E_b\left[\sup_{y\in[y_1,\zeta]\colon \vert y_i^*-y\vert\leq \eta\Delta^\beta} \left\vert \xi_b(y)-\hat{\xi}_T(y)-\xi_b(y_i^*)+\hat{\xi}_T(y_i^*) \right\vert^p  \right]	\\
&\quad\le2^p\Bigg(2^p\E_b\left[ \left(\int_{y_i^*-\eta\Delta^\beta}^{y_i^*+\eta\Delta^\beta} \left\vert\frac{\hat{F}_T(z)}{\bar{\rho}_T(z)}-\frac{F_b(z)}{\rho_b(z)} \right\vert\d z\right)^p  \right]+\E_b\Bigg[\1\left\{2\int_0^{y_1}\frac{\hat{F}_T(z)}{\bar{\rho}_T(z)}\d z\leq \frac{M_1}{2}\right\}\\
&\quad\qquad\times\sup_{y\in[y_1,\zeta]\colon \vert y_i^*-y\vert\leq \eta\Delta^\beta} \left\vert \int_y^{y^*}\frac{\hat{F}_T(z)}{\bar{\rho}_T(z)}\d z+\frac 1 2 \left\vert2\int_0^y\frac{\hat{F}_T(z)}{\bar{\rho}_T(z)}\d z-\frac{M_1}{2}\right\vert-\frac 1 2 \left\vert2\int_0^{y^*}\frac{\hat{F}_T(z)}{\bar{\rho}_T(z)}\d z-\frac{M_1}{2}\right\vert \right\vert^p  \Bigg]\Bigg)
					\\
&\quad\leq 2^{2p}\Bigg(\E_b\left[ \left(\int_{y_i^*-\eta\Delta^\beta}^{y_i^*+\eta\Delta^\beta} \left\vert\frac{\hat{F}_T(z)}{\bar{\rho}_T(z)}-\frac{F(z)}{\rho(z)} \right\vert\d z\right)^p  \right]+2^pM_1^{-p}a^{-p} \eta^p \Delta^{\beta p}\E_b\left[\big\vert \xi_b(y_1)-\hat{\xi}_T(y_1)\big\vert^p\right]\Bigg)
					\\
&\quad\leq 2^{3p}\eta^p\Delta^{\beta p}\Bigg((2\eta\Delta^\beta)^{-1}\E_b\left[ \int_{y_i^*-\eta\Delta^\beta}^{y_i^*+\eta\Delta^\beta} \left\vert\frac{\hat{F}_T(z)}{\bar{\rho}_T(z)}-\frac{F(z)}{\rho(z)} \right\vert^p\d z \right]+M_1^{-p}a^{-p} \E_b\left[\big\vert \xi_b(y_1)-\hat{\xi}_T(y_1)\big\vert^p\right]\Bigg)\stepcounter{equation}\tag{\theequation}\label{eq: long comp 2},
\end{align*}
where we again used Lipschitz continuity of $\hat{\xi}_T$, together with the reverse triangle inequality, Markov's inequality and Hölder's inequality.
Now Proposition \ref{prop: est mom} gives
\begin{align*}
&\E_b\left[\sup_{y\in[y_1,\zeta]\colon\vert y_i^*-y\vert\leq \eta\Delta^\beta} \big \vert \xi_b(y)-\hat{\xi}_T(y) \big\vert^p  \right]\lor \E_b\left[\big\vert \xi_b(y_1)-\hat{\xi}_T(y_1)\big\vert^p\right]
					\\
					&\quad\le \E_b\left[\sup_{y\in[y_1,\zeta]} \left \vert \xi_b(y)-\hat{\xi}_T(y) \right\vert^p  \right]
					\\
					&\quad\le2^p \E_b\left[\sup_{y\in[y_1,\zeta]} \bigg \vert \int_0^y\frac{\hat{F}_T(z)}{\bar{\rho}_T(z)}-\frac{F_b(z)}{\rho_b(z)}\d z \bigg\vert^p  \right]
					\\
					&\quad\le 2^p\zeta^{p-1} \E_b\left[\int_0^\zeta\bigg\vert \frac{\hat{F}_T(z)}{\bar{\rho}_T(z)}-\frac{F_b(z)}{\rho_b(z)} \bigg\vert^p\d z  \right]
					\\
					&\quad\le 2^{2p-1}\zeta^{p-1} \int_0^\zeta\E_b\left[\bigg\vert \frac{\hat{F}_T(z)}{\bar{\rho}_T(z)}-\frac{\hat{F}_T(z)}{\rho_b(z)}\bigg\vert^p+\bigg\vert\frac{\hat{F}_T(z)}{\rho_b(z)}-\frac{F_b(z)}{\rho_b(z)} \bigg\vert^p  \right]\d z
					\\
					&\quad\le 2^{2p-1}\zeta^{p-1} \int_0^\zeta\E_b\left[a^{-2p}\left \vert \bar{\rho}_T(z)-\rho_b(z)\right\vert^p+a^{-p}\big\vert\hat{F}_T(z)-F_b(z) \big\vert^p  \right]\d z
					\\
					&\quad\le 2^{2p-1}\zeta^{p-1} \int_0^\zeta\E_b\left[a^{-2p}\big \vert \hat{\rho}_T(z)-\rho_b(z)\big\vert^p+a^{-p}\big\vert\hat{F}_T(z)-F_b(z) \big\vert^p  \right]\d z
					\\
					&\quad\le 2^{2p-1}\zeta^{p} (a^{-2p}c_1^p +a^{-p}c_2^p)T^{-p/2}\left( p^{p/2}+ p^p T^{-p/2} \right)
					\\
					&\quad\le (4c_4\zeta)^pT^{-p/2}\left( p^{p/2}+ p^p T^{-p/2} \right), \stepcounter{equation}\tag{\theequation}\label{eq: long comp 3}
				\end{align*}
				where $c_4\coloneqq \left(c_1a^{-2}\lor c_2a^{-1}\right)$. 
				Similarly, one obtains
				\begin{equation}\label{eq: long comp 4}
					\E_b\left[ \int_{y_i^*-\eta\Delta^\beta}^{y_i^*+\eta\Delta^\beta} \bigg\vert\frac{\hat{F}_T(z)}{\bar{\rho}_T(z)}-\frac{F(z)}{\rho(z)}\bigg\vert^p\d z \right]\leq 4c_4^p\eta\Delta^\beta T^{-p/2}\left( p^{p/2}+ p^p T^{-p/2} \right), 
				\end{equation}
				and putting everything together by combining the upper bounds \eqref{eq: long comp 00} to \eqref{eq: long comp 4} completes the proof.
			\end{proof}
			
			\begin{proof}[Proof of Proposition \ref{prop: existence hyp}]
				Fix $T>0$.
				Throughout the proof, we denote $\ep\coloneqq T^{-1/2}$. 
				\paragraph{Construction of the hypotheses}
				Letting $A_0\coloneqq y^*+M^\beta$, we define drift functions $b,\bb\colon \R\to\R$ by setting
				\begin{align*}
					\begin{split}
						b(x)\coloneqq \begin{cases}
							-x, & x\leq 0\\
							0, & 0<x\leq A_0\\
							-(x-A_0), &A_0<x
						\end{cases}
						\quad \text{ and }\quad  
						\bb(x)\coloneqq\begin{cases}
							-(1+\ep)^{-1}x, & x\leq 0\\
							0, & 0<x\leq A_0\\
							-(x-A_0), &A_0<x
						\end{cases},
					\end{split}
				\end{align*}
				respectively.
				Note that this definition ensures that, for large enough $T,$ $b,\bb\in\Sigma(\mathbf{C},A,\gamma)$. 
				Furthermore, we obtain for $0<x\leq A_0$
				\begin{align}\label{eq: xib}
					\xi_b(x)&=2\int_0^x \exp\left(-2\int_0^y b(u)\d u\right)\int_{-\infty}^y\exp\left(2\int_0^zb(u)\d u\right)\d z\d y\notag
					\\
					&=2\int_0^x \int_{-\infty}^0\exp\left(2\int_0^zb(u)\d u\right)\d z+ \int_{0}^y\exp\left(2\int_0^zb(u)\d u\right)\d z\d y\notag
					\\
					&=2\int_0^xy+ \frac{\sqrt{\pi}}{2}\d y\notag
					\\
					&=x^2+\sqrt{\pi}x
				\end{align}
				and
				\begin{align*}
					\xi_{\bb}(x)
					&=2\int_0^xy+ \int_{-\infty}^0\exp\left(2\int_0^z\bb(u)\d u\right)\d z\d y
					\\
					&=x^2+(1+\ep)\sqrt{\pi}x.
				\end{align*}
				Hence, defining the functions $f,g\colon(0,\infty)\to \R$ by setting
				\[	f(y)\coloneqq M-\vert y-y^*\vert^{1/\beta}
				\quad\mathrm{ and }\quad
				g(y)\coloneqq f(y)\xi_{\bb}(y), \]
				respectively, immediately yields $g\in \cG_{\bb}(y_1,\zeta,MM_2,\Delta_0,n,\eta,\beta)$.
				Finally, letting $\bar{g}\colonequiv f\xi_b$, analogously gives $\bar{g}\in \cG_b(y_1,\zeta,MM_2,\Delta_0,n,\eta,\beta)$.
				
				\paragraph{Verification of $\operatorname{(a)}$}
				It follows from Girsanov's theorem that, for large enough $T$,
				\begin{align*}
					\KL(\Pro_{b}^T,\Pro_{\bb}^T)
					&=\E_b\left[\log\left(\frac{\rho_b}{\rho_{\bb}}(X_0)\right)\right]+\frac 1 2 \E_b\left[\int_0^T\left(b(X_s)-\bb(X_s)\right)^2\d s\right]
					\\
					&\le c+\E_b\left[2\int_0^{X_0}\left(b(y)-\bb(y)\right)\d y\right]+\frac T 2 \E_b\left[\left(b(X_0)-\bb(X_0)\right)^2\right]
					\\
					&\leq c+\ep^2\frac T 2 \E_b\left[X_0^2\1_{(-\infty,0]}(X_0)\right]
					\\
					&\leq c,
				\end{align*} 
				where $c>0$ is a strictly positive constant, whose value changes from line to line.
				
				\paragraph{Verification of $\operatorname{(b)}$}
				Note that
				\begin{equation*}
					\frac{g(y)}{\xi_b(y)}=f(y)\left(1+\frac{\ep\sqrt{\pi}}{y+\sqrt{\pi}}\right)\eqqcolon h(y),
				\end{equation*}
				implying
				\begin{equation*}
					h'(y)=f'(y)\left(1+\frac{\ep\sqrt{\pi}}{y+\sqrt{\pi}}\right)-f(y)\frac{\ep\sqrt{\pi}}{(y+\sqrt{\pi})^2}.
				\end{equation*}
				Hence, we obtain for all $y\in\big[y^*-M^\beta,y^*\big)$ and small enough $\ep$,
				\begin{align*}
					h'(y)&< 2f'(y)-f(y)\frac{\ep\sqrt{\pi}}{(y^*+\sqrt{\pi})^2}
					\\
					&=\frac{2}{\beta}\vert y^*-y\vert^{1/\beta-1}-\left(M-\vert y^*-y\vert^{1/\beta}\right)\frac{\ep\sqrt{\pi}}{(y^*+\sqrt{\pi})^2}
					\\
					&<\vert y^*-y\vert^{(1-\beta)/\beta}\left(\frac{2}{\beta}+\frac{y^\ast\sqrt{\pi}}{(y^*+\sqrt{\pi})^2} \right)-\frac{\ep M\sqrt{\pi}}{(y^*+\sqrt{\pi})^2}
					\\
					&=\frac{M\sqrt{\pi}}{(y^*+\sqrt{\pi})^2}\left(\vert y^*-y\vert^{(1-\beta)/\beta}\frac{2(y^*+\sqrt{\pi})^2+\beta y^*\sqrt{\pi}}{M\sqrt{\pi}\beta} -\ep\right)
				\end{align*}
				Denoting
				\[
				c_5\coloneqq\left(\frac{M\beta \sqrt{\pi}}{4(y^*+\sqrt{\pi})^2+2\beta y^*\sqrt{\pi} }\right)^{\beta/(1-\beta)}, 
				\]
				this gives for $y^*-c_5\ep^{\beta/(1-\beta)}\leq y\leq y^*$ that
				\[
				h'(y)<-M\frac{\sqrt{\pi}}{2(y^*+\sqrt{\pi})^2}\ep,
				\]
				and, since $f'(y)< 0$ and $f(y)\geq 0$ for all $y^*<y\leq y^*+M^\beta$, it also follows in this case that $h'(y)<0$.
				Hence, if 
				\[
				\Phi_{\bb}(g)-\frac{g(y)}{\xi_{\bb}(y)}=\vert y-y^*\vert^{1/\beta}\leq \frac{c_5^{1/\beta} \ep^{1/(1-\beta)}}{2^{1/\beta}}, 
				\]
				we obtain
				\[
				\vert y-y^*\vert\leq \frac{c_5 \ep^{\beta/(1-\beta)}}{2},  
				\]
				implying	
				\begin{align*}
					\Phi_b(g)-h(y)&\geq h\left(y^*-c_5\ep^{\beta/(1-\beta)}\right)
					-h\left(y^*-c_5\ep^{\beta/(1-\beta)}/2\right)\\
					&=-\int_{y^*-c_5\ep^{\beta/(1-\beta)}}^{y^*-c_5\ep^{\beta/(1-\beta)}/2}h'(y) \d y
					\\&>c_5M\frac{\sqrt{\pi}}{4(y^*+\sqrt{\pi})^2}\ep^{1/(1-\beta)}.
				\end{align*}
				Thus,
				\[
				\bigg\{ \Phi_{\bb}(g)-\frac{g(y)}{\xi_{\bb}(y)}\leq \frac{c_5^{1/\beta} \ep^{1/(1-\beta)}}{2^{1/\beta}}\bigg\}\subseteq\bigg\{\Phi_b(g)-h(y)>c_5M\frac{\sqrt{\pi}}{4(y^*+\sqrt{\pi})^2}\ep^{1/(1-\beta)} \bigg\} 
				\]
				holds true.
				\paragraph{Verification of $\operatorname{(c)}$}
				For $y^*-M^\beta\leq y< y^*$,
				\begin{align*}
					h'(y)&\geq f'(y)-\frac{\ep}{\sqrt{\pi}}f(y)\\
					&>\frac{1}{\beta}\vert y^*-y\vert^{(1-\beta)/\beta}-\frac{M\ep}{\sqrt{\pi}},
				\end{align*}
				implying that $h'(y)>0$ for 
				$y\in\left[y_1,y^*-(\beta M\ep/\sqrt{\pi})^{\beta/(1-\beta)}\right]$.
				Thus, it holds 
				\[h(y^*_b)=\Phi_b(g)\quad 
				\implies\quad 
				y^*_b\in\left[y^*-(\beta M\ep/\sqrt{\pi})^{\beta/(1-\beta)},y^*-c_5\ep^{\beta/(1-\beta)}\right].
				\]Recall that $\bar{g}\equiv f\xi_b,$ which implies for any $c>1$
				\[
				\left\{\Phi_b(\bar{g})-\frac{\bar{g}}{\xi_b}\leq c\ep^{1/(1-\beta)} \right\}= 
				\left[y^*-c^\beta\ep^{\beta/(1-\beta)},y^*+c^\beta\ep^{\beta/(1-\beta)}\right]. 
				\]
				Hence, $y\in \left\{\Phi_b(\bar{g})-\bar{g}/\xi_b\leq c\ep^{1/(1-\beta)}\right\}$ implies for small enough $\ep$ by applying a Taylor expansion
				\begin{align*}
					\Phi_b(g)-h(y)&\leq  \left(c^\beta+(\beta M/\sqrt{\pi})^{\beta/(1-\beta)}\right)\ep^{\beta/(1-\beta)}\sup_{y\in[y^*-(\beta M\ep/\sqrt{\pi})^{\beta/(1-\beta)},y^*+c^\beta\ep^{\beta/(1-\beta)}] } (-h'(y))
					\\
					&\le\left(c^\beta+(\beta M/\sqrt{\pi})^{\beta/(1-\beta)}\right)\ep^{\beta/(1-\beta)}
					\\&\hspace*{3cm}\times\sup_{y\in[y^*-(\beta M\ep/\sqrt{\pi})^{\beta/(1-\beta)},y^*+c^\beta\ep^{\beta/(1-\beta)}] }  \left(\frac{2}{\beta}\vert y^*-y\vert^{(1-\beta)/\beta}+\ep\frac{M}{\sqrt{\pi}}\right)
					\\
					&\le\left(c^\beta+(\beta M/\sqrt{\pi})^{\beta/(1-\beta)}\right) \left((2 M\pi^{-1/2}\lor2c^{1-\beta}\beta^{-1})+\frac{M}{\sqrt{\pi}}\right)\ep^{1/(1-\beta)}
					\\
					&\le c\left(1+(\beta M/\sqrt{\pi})^{\beta/(1-\beta)}\right) \left((2 M\pi^{-1/2}\lor2\beta^{-1})+\frac{M}{\sqrt{\pi}}\right)\ep^{1/(1-\beta)}.
				\end{align*}
				Thus, denoting
				\[c_6\coloneqq \left(1+(\beta M/\sqrt{\pi})^{\beta/(1-\beta)}\right) \left(2( M\pi^{-1/2}\lor\beta^{-1})+\frac{M}{\sqrt{\pi}}\right),\]
				it holds for any $c>1$
				\[\left\{\Phi_b(\bar{g})-\frac{\bar{g}}{\xi_b}> c\ep^{1/(1-\beta)} \right\}\supseteq\left\{\Phi_b(g)-\frac{g}{\xi_b}>c_6c\ep^{1/(1-\beta)} \right\},  \]
				which implies $g\in\cG_{\bar{g}}(\kappa,T)$. 
				This concludes the proof.
			\end{proof}

Before proving the lower bound on cumulative regret given in Theorem 4.2, we state and prove the following crucial auxiliary result.

\begin{lemma}\label{lemma: KL cum}
For $A,\beta>0$, $\mathbf{C}\ge 1$, let $b,\bb\in\Sigma(\mathbf{C},A,\gamma)$ be given such that $\Vert b-\bb\Vert_\infty\leq c<\infty$ for some constant $c>0$. 
Then, for any general impulse control strategy $K$,
\begin{align*}
\KL(\P^T_{\bb,K},\P^T_{b,K})=\E_{\bb}\left[\frac 1 2 \int_0^T (\bb-b)^2(X^K_s)\d s\right],
\end{align*}
where $\P^T_{b,K}$ denotes the measure induced by $\X^K$ on the space of càdlàg functions $D[0,\infty)$, restricted to $\cF^W_T$, with $(\cF^W_t)_{t\geq0}$ denoting the natural filtration of the driving Wiener process in \eqref{eq: sde}, and $\P^T_{\bb,K}$ is defined analogously.
\end{lemma}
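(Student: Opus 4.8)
The plan is to obtain the identity by a Girsanov change of measure, exploiting that an impulse control strategy $K$ is a \emph{drift-independent} functional of the controlled path. Concretely, the interventions occur at times $\tau_n=\inf\{t\ge\tau_{n-1}:X_t\ge y_n\}$ with barriers $y_n$ measurable with respect to the natural filtration $\cF^{\X^K}_{\tau_{n-1}}$, each followed by a reset $X^K_{\tau_n}=0$, and $\tau_0=0$ with $X^K_0=0$ deterministic. Working on a probability space carrying the driving Brownian motion $W$, an induction over the increasing sequence $(\tau_n)$ gives $\cF^{\X^K}_t\subseteq\cF^W_t$ for every $t$: on each stochastic interval $[\tau_{n-1},\tau_n)$ the process is the strong, $\cF^W$-adapted solution of $\d X_t=b(X_t)\,\d t+\d W_t$ started from $0$, and $\tau_n$ is an $\cF^W$-stopping time since $y_n$ is measurable with respect to $\cF^{\X^K}_{\tau_{n-1}}\subseteq\cF^W_{\tau_{n-1}}$. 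Thus $(\bb-b)(X^K_\cdot)$ is $\cF^W$-progressively measurable, and $\P^T_{b,K}$, $\P^T_{\bb,K}$ are the laws on $\cF^W_T$ of $\X^K$ driven by $W$ under the drifts $b$ and $\bb$, respectively, controlled by the same $K$.

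Next I would introduce
\[
Z_t\coloneqq\exp\left(\int_0^t(\bb-b)(X^K_s)\,\d W_s-\frac12\int_0^t(\bb-b)^2(X^K_s)\,\d s\right),\qquad 0\le t\le T,
\]
where $W$ is a Brownian motion under $\P^T_{b,K}$. The hypothesis $\Vert b-\bb\Vert_\infty\le c$ enters here decisively and only once: since $(\bb-b)(X^K_s)$ is bounded by $c$, we have $\int_0^T(\bb-b)^2(X^K_s)\,\d s\le c^2T$, so Novikov's condition is immediate and $(Z_t)_{0\le t\le T}$ is a true $\P^T_{b,K}$-martingale; declaring $\d\P^T_{\bb,K}/\d\P^T_{b,K}=Z_T$ on $\cF^W_T$ therefore yields a probability measure. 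By Girsanov's theorem $\tilde W_t\coloneqq W_t-\int_0^t(\bb-b)(X^K_s)\,\d s$ is a Brownian motion under $\P^T_{\bb,K}$, so on each $[\tau_{n-1},\tau_n)$ one has $\d X^K_t=b(X^K_t)\,\d t+\d W_t=\bb(X^K_t)\,\d t+\d\tilde W_t$, while the resets and barriers are untouched, being the same path functionals; as $X^K_0=0$ is deterministic there is no contribution of the initial law (in contrast to the uncontrolled setting of Proposition \ref{prop: existence hyp}), so this $\P^T_{\bb,K}$ is indeed the law of $\X^K$ under drift $\bb$ and strategy $K$.

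It then remains to compute, writing $\E_\bb$ for expectation under $\P^T_{\bb,K}$,
\[
\KL(\P^T_{\bb,K},\P^T_{b,K})=\E_\bb[\log Z_T]=\E_\bb\left[\int_0^T(\bb-b)(X^K_s)\,\d W_s\right]-\frac12\E_\bb\left[\int_0^T(\bb-b)^2(X^K_s)\,\d s\right].
\]
Substituting $\d W_s=\d\tilde W_s+(\bb-b)(X^K_s)\,\d s$ and using that $\int_0^\cdot(\bb-b)(X^K_s)\,\d\tilde W_s$ is a genuine $\P^T_{\bb,K}$-martingale (bounded integrand, $\tilde W$ a Brownian motion), hence of zero mean, the first expectation equals $\E_\bb[\int_0^T(\bb-b)^2(X^K_s)\,\d s]$, and collecting terms gives the asserted identity.

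The one genuinely non-routine point is making the Girsanov step rigorous across the jumps of $\X^K$ at the intervention times — confirming that the change of measure tilts only the continuous martingale part of $\X^K$ and leaves the path-measurable, drift-free resets intact, together with the filtration bookkeeping $\cF^{\X^K}\subseteq\cF^W$. I would handle this by patching the argument over the stochastic intervals $[\tau_{n-1},\tau_n)$ and invoking optional stopping, or equivalently by noting that $\X^K$ solves a single SDE whose diffusion coefficient is unaffected by the control, so that the standard Girsanov theorem for continuous semimartingales applies with the interventions entering only through the drift dynamics between consecutive resets.
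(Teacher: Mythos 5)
Your proposal is correct and follows essentially the same route as the paper: the same exponential change of measure with Novikov verified via $\Vert b-\bb\Vert_\infty\le c$, the same identification of the tilted measure as $\P^T_{\bb,K}$ using that $\X^K$ is $\cF^W$-adapted and that the strategy only enters between resets, and the same computation of the Kullback--Leibler divergence by rewriting $\d W_s=\d\tilde W_s+(\bb-b)(X^K_s)\,\d s$ and using that the $\tilde W$-integral has zero mean. Your added filtration bookkeeping ($\cF^{\X^K}_t\subseteq\cF^W_t$) only makes explicit what the paper states in passing.
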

\begin{proof}
For $\bb,b$ fulfilling $\Vert \bb-b\Vert_\infty \leq c<\infty$, let 
\[Y_T\coloneqq\int_0^T \left(\bb-b\right)(X_s^K)\d W_s. \]
Then, under the measure $\Q$ on $\mathcal{F}^W_T$ satisfying
\[\frac{\d \Q}{\d \P_{b,K}^T}=\exp\left( \int_0^T \left(\bb-b\right)(X_s^K)\d W_s- \frac 1 2 \int_0^T \left(\bb-b\right)^2(X_s^K)\d s \right), \]
Girsanov's theorem implies that
\[\tilde{W}_t\coloneq W_t-\int_0^t \left(\bb-b\right)(X_s^K)\d s,\quad 0\leq t\leq T, \]
is a Wiener process.
This is true since the Novikov condition is trivially fulfilled and because $\X^K$ is adapted to $(\cF^W_t)_{t\geq0}$. 
Thus, under $\Q$, it holds for $\tau_n\leq t< \tau_{n+1}\leq T$ that
\begin{align*}
X^K_t&=\int_{\tau_n}^t b(X^K_s)\d s+\int_{\tau_n}^t\d W_s\\
&=\int_{\tau_n}^t \bb(X^K_s)\d s+\int_{\tau_n}^t \d \tilde{W}_s,
\end{align*}
and hence $\Q=\P^T_{\bb,K}$. 
Thus, we get for the Kullback--Leibler divergence
\begin{align*}
\KL(\P^T_{\bb,K},\P^T_{b,K})&=\E_{\bb}\left[\log\left(\frac{\d \P^T_{\bb,K}}{\d \P^T_{b,K}} \right) \right]
		\\
&=\E_{\bb}\left[\int_0^T (\bb-b)(X^K_s)\d W_s-\frac 1 2 \int_0^T (\bb-b)^2(X^K_s)\d s\right]
\\
&=\E_{\bb}\left[\int_0^T (\bb-b)(X^K_s)\d \tilde{W}_s+\frac 1 2 \int_0^T (\bb-b)^2(X^K_s)\d s\right]
\\
&=\E_{\bb}\left[\frac 1 2 \int_0^T (\bb-b)^2(X^K_s)\d s\right].
\end{align*}
\end{proof}	
			\begin{proof}[Proof of Theorem \ref{thm: lower bound cum}.]
Fix an impulse control strategy $K$ wrt $\cG(y_1,\zeta,MM_2,\mathbf{C},A,\gamma)$, 
and define $b,\bar{b}$ as in the proof of Theorem \ref{thm: lower bound gen}. 
Then, it holds $b,\bb\in\Sigma(\mathbf{C},A,\gamma)$ for large enough $T$, and Lemma \ref{lemma: KL cum} implies that
\begin{align}\label{eq: kl cum bound}
		\KL(\P^T_{\bb,K},\P^T_{b,K})\leq M^2,
\end{align}
 where $\P^T_{\bb,K},\P^T_{b,K}$ are defined in Lemma \ref{lemma: KL cum}.
Furthermore, choosing $g$ again as in the proof of Theorem \ref{thm: lower bound gen} also implies $g\in \cG(y_1,\zeta,MM_2,\mathbf{C},A,\gamma)$ for large enough $T$, and 
 \[\inf_{y\in[y_1,a]}\left(\Phi_b(g)-\frac{g}{\xi_b}(y)\right)\geq cT^{-1/2} ,\quad \inf_{y\in[a,\zeta]}\left(\Phi_{\bb}(g)-\frac{g}{\xi_{\bb}}(y)\right)\geq cT^{-1/2}, \]
 for some constant $c>0$ specified in the proof of Theorem \ref{thm: lower bound gen}.
Defining
	\begin{align*}
		\mathcal{T}_{1}&\coloneqq\{n\in \N: \tau_{n-1} \leq T, y_n\leq a \},\quad T_{1}\coloneq \sum_{n\in\mathcal{T}_1} (\tau_n\land T-\tau_{n-1}),
		\\
		\mathcal{T}_{2}&\coloneqq\{n\in \N: \tau_{n-1} \leq T, y_n>a \},\quad
		T_{2}\coloneq \sum_{n\in\mathcal{T}_1} (\tau_n\land T-\tau_{n-1}),
	\end{align*} 
it then holds
	\begin{align}
		\E_b\left[\sum_{n:\tau_n\leq T}g(X_{\tau_n-}^K)\right]&\leq \E_b\left[\sum_{n\in \mathcal{T}_{1}}g(X_{\tau_n-}^K)\right]+\E_b\left[\sum_{n\in \mathcal{T}_{2}}g(X_{\tau_n-}^K)\right]\nonumber
		\\\nonumber
		&=  \E_b\left[\sum_{n\in \mathcal{T}_{1}}\frac{g(X_{\tau_n-}^K)}{\E_b[\tau_{n}-\tau_{n-1}\vert \mathcal{F}_{\tau_{n-1}}]}\E_b[\tau_{n}-\tau_{n-1}\vert \mathcal{F}_{\tau_{n-1}}]\right]\\&\quad+\E_b\left[\sum_{n\in \mathcal{T}_{1}}\frac{g(X_{\tau_n-}^K)}{\E_b[\tau_{n}-\tau_{n-1}\vert \mathcal{F}_{\tau_{n-1}}]}\E_b[\tau_{n}-\tau_{n-1}\vert \mathcal{F}_{\tau_{n-1}}]\right]\nonumber
		\\
		&=  \E_b\left[\sum_{n\in \mathcal{T}_{1}}\frac{g(y_n)}{\xi_b(y_n)}\E_b[\tau_{n}-\tau_{n-1}\vert \mathcal{F}_{\tau_{n-1}}]\right]+\E_b\left[\sum_{n\in \mathcal{T}_{1}}\frac{g(y_n)}{\xi_b(y_n)}\E_b[\tau_{n}-\tau_{n-1}\vert \mathcal{F}_{\tau_{n-1}}]\right]\nonumber
		\\
		&\leq\Phi_b(g)  \E_b\left[\sum_{n\in \mathcal{T}_{1}}\E_b[\tau_{n}-\tau_{n-1}\vert \mathcal{F}_{\tau_{n-1}}]\right]+\E_b\left[\sum_{n\in \mathcal{T}_{1}}\frac{g(y_n)}{\xi_b(y_n)}\E_b[\tau_{n}-\tau_{n-1}\vert \mathcal{F}_{\tau_{n-1}}]\right]\nonumber
		\\\nonumber
		&=\Phi_b(g)  \E_b\left[\sum_{n:\tau_{n-1}\leq T}\E_b[\tau_{n}-\tau_{n-1}\vert \mathcal{F}_{\tau_{n-1}}]\right]\\\nonumber&\quad+\E_b\left[\sum_{n\in \mathcal{T}_{1}}\left(\frac{g(y_n)}{\xi_b(y_n)}-\Phi_b(g)\right)\E_b[\tau_{n}-\tau_{n-1}\vert \mathcal{F}_{\tau_{n-1}}]\right]
		\\\nonumber
		&\leq \Phi_b(g)  \E_b\left[\sum_{n:\tau_{n-1}\leq T}\E_b[\tau_{n}-\tau_{n-1}\vert \mathcal{F}_{\tau_{n-1}}]\right]-cT^{-1/2}\E_b\left[\sum_{n\in \mathcal{T}_{1}}\E_b[\tau_{n}-\tau_{n-1}\vert \mathcal{F}_{\tau_{n-1}}]\right]
		\\
		&\leq  \Phi_b(g)(T+\xi_b(\zeta))-cT^{-1/2}\E_b\left[T_1 \right].\label{eq: cum 1}
	\end{align}
Analogous arguments give	
\begin{equation}\label{eq: cum 2}
\E_{\bb}\left[\sum_{n:\tau_n\leq T}g(X_{\tau_n-}^K)\right]
\le \Phi_{\bb}(g)(T+\xi_{\bb}(\zeta))-cT^{-1/2}\E_{\bb}\left[T_2 \right].
\end{equation}
Now, applying the Bretagnolle--Huber inequality \eqref{bh ineq} together with \eqref{eq: kl cum bound} implies
\begin{equation*}
\P_{b}(T_1\geq T/2)+\P_{\bb}(T_1< T/2)\ge\frac 1 2 \exp(-M^2),
\end{equation*}
which, combined with \eqref{eq: cum 1} and \eqref{eq: cum 2}, gives for large enough $T$
\begin{align*}
&\Phi_b(g)T- \E_b\left[\sum_{n:\tau_n\leq T}g(X_{\tau_n-}^K)\right]+\Phi_{\bb}(g)T- \E_{\bb}\left[\sum_{n:\tau_n\leq T}g(X_{\tau_n-}^K)\right]\\
&\quad\ge cT^{-1/2}(\E_b\left[T_1\right]+\E_{\bb}\left[T_2\right])-\Phi_b(g)\xi_b(\zeta)-\Phi_{\bb}(g)\xi_{\bb}(\zeta)\\
&\quad\ge c\sqrt{T} (\P_b(T_1\geq T/2)+\P_{\bb}(T_1< T/2))-\Phi_b(g)\xi_b(\zeta)-\Phi_{\bb}(g)\xi_{\bb}(\zeta)\\
&\quad\ge c\sqrt{T}-\Phi_b(g)\xi_b(\zeta)-\Phi_{\bb}(g)\xi_{\bb}(\zeta)\\
&\quad\ge c\sqrt{T},
\end{align*}
where the value of $c>0$ changes from line to line. 
As $K$ was arbitrary and $2\max(a,b)\geq a+b$ holds for any $a,b\in\R$, this concludes the proof.
\end{proof}

\paragraph{Acknowledgement}
ND and CS are grateful for financial support of Sapere Aude: DFF-Starting Grant 0165-00061B ``Learning diffusion dynamics and strategies for optimal control''.

\printbibliography
			
\end{document}